\documentclass[reqno,12pt]{amsart}
\usepackage{color}
\usepackage[margin=1in]{geometry}
\usepackage{amsmath,amssymb}
\usepackage{amsthm}
\usepackage{paralist} 
\usepackage{graphicx}  
\usepackage{bbm}
\usepackage{subcaption}
\usepackage{fancyhdr}
\usepackage{indentfirst}
\usepackage{booktabs}
\usepackage{verbatim}
\usepackage{tcolorbox}
\usepackage{mathtools}
\usepackage{bm}
\usepackage{wasysym}
\usepackage{empheq}
\usepackage{tabularx}
\graphicspath{{figs_TSP/}}
\usepackage[font=small,labelfont=bf]{caption} 
\usepackage{paralist}
\usepackage{algorithm}
\usepackage{algpseudocode}
\usepackage{hyperref}

\newtheorem{theorem}{Theorem}[section]

\newtheorem{lemma}[theorem]{Lemma}
\newtheorem{proposition}[theorem]{Proposition}

\newtheorem{assumption}{Assumption}

\newtheorem{remark}{Remark}

\newcommand{\E}{\mathbb{E}}

\newcommand{\R}{\mathbb{R}}
\renewcommand{\P}{\mathbb{P}}

\newcommand{\CA}{\mathcal{A}}
\newcommand{\CB}{\mathcal{B}}
\newcommand{\CE}{\mathcal{E}}

\newcommand{\CL}{\mathcal{L}}

\newcommand{\CM}{\mathcal{M}}

\newcommand{\CT}{\mathcal{T}}
\newcommand{\CW}{\mathcal{W}}
\newcommand{\CP}{\mathcal{P}}

\newcommand{\CV}{\mathcal{V}}

\newcommand{\argmin}{\operatornamewithlimits{argmin}}

\def\P{\mathbb{P}}

\def\t{\tilde}

\def\R{\mathbb{R}}
\def\E{\mathbb{E}}

\def\P{\mathbb{P}}

\def\t_i{{t_i}}

\def\t{\tilde}

\title[Consensus-Based Method for Composite Optimization]{ProxiCBO: A Provably Convergent Consensus-Based Method for Composite Optimization}

\author[H. Zhang]{Haoyu Zhang\textsuperscript{1}}
\thanks{\textsuperscript{1} Department of Mathematics, University of California San Diego, San Diego, CA 92093 USA. 
(haz053@ucsd.edu)}
\address{(HZ) Department of Mathematics, University of California San Diego, San Diego, CA 92093 USA.}
\email{haz053@ucsd.edu}

\author[Y. Ma]{Yanting Ma\textsuperscript{2} \textsuperscript{$\dagger$}}
\thanks{\textsuperscript{2}Mitsubishi Electric Research Laboratories (MERL), Cambridge, MA 02139 USA.
(yma@merl.com)}
\address{(YM) Mitsubishi Electric Research Laboratories (MERL), Cambridge, MA 02139 USA.} \email{yma@merl.com}

\author[R. Kitichotkul]{Ruangrawee Kitichotkul\textsuperscript{3}}
\thanks{\textsuperscript{3} Department of Electrical and Computer Engineering, Boston University, Boston, MA 02215 USA.
(rkitich@bu.edu)}
\address{(RK) Department of Electrical and Computer Engineering, Boston University, Boston, MA 02215 USA.} \email{rkitich@bu.edu}

\author[J. Rapp]{Joshua Rapp\textsuperscript{4}}
\thanks{\textsuperscript{4}  Mitsubishi Electric Research Laboratories (MERL), Cambridge, MA 02139 USA.
(rapp@merl.com)} 
\address{(JR)  Mitsubishi Electric Research Laboratories (MERL), Cambridge, MA 02139 USA.}
\email{rapp@merl.com}

\author[P. T. Boufounos]{Petros T. Boufounos\textsuperscript{5}}
\thanks{\textsuperscript{5}  Mitsubishi Electric Research Laboratories (MERL), Cambridge, MA 02139 USA.
(petrosb@merl.com)} 
\address{(PTB)  Mitsubishi Electric Research Laboratories (MERL), Cambridge, MA 02139 USA.}
\email{petrosb@merl.com}

\thanks{A preliminary version of this work \cite{zhangproxicbo} has been accepted to appear at {ICASSP 2026}. Compared with the conference version, the current paper provides complete statement and proofs of the theoretical results, and extends numerical experiments.}
\thanks{This work was completed while H.~Z. and R.~K. were interns at MERL.}
\thanks{\textsuperscript{$\dagger$} Corresponding author.}

\begin{document}
\begin{abstract}
This paper introduces an interacting-particle optimization method tailored to possibly non-convex composite optimization problems, which arise widely in signal processing. The proposed method, \emph{ProxiCBO}, integrates consensus-based optimization (CBO) with proximal gradient techniques to handle challenging optimization landscapes and exploit the composite structure of the objective function. We establish global convergence guarantees for the continuous-time finite-particle dynamics and develop an alternating update scheme for efficient practical implementation. Simulation results for signal processing tasks, including signal recovery from one-bit quantized measurements and parameter estimation from single-photon lidar data, demonstrate that ProxiCBO outperforms existing proximal gradient methods and CBO methods in terms of both accuracy and particle-efficiency.
\end{abstract}
\maketitle
\section{Introduction}
In this paper, we propose an interacting-particle method for solving \textit{composite optimization} problems of the form
\begin{gather}\label{obj}
    \min_{v\in\mathbb{R}^d} \left\{\mathcal{E}(v):=f(v)+g(v)\right\},
\end{gather}
where $f(v)$ is differentiable but possibly non-convex, and $g(v)$ is convex but possibly non-differentiable. 

Composite optimization provides a unifying framework for a wide range of inverse problems in signal processing. In this context, $f(v)$ is the data fidelity term that incorporates observation model and promotes measurement consistency. For example, $f(v)=\tfrac12\|\mathcal{A}(v)-y\|_2^2$ is a commonly used data loss when given measurements $y$ and observation model $\mathcal{A}$. When $\mathcal{A}$ is nonlinear, $f$ is usually non-convex. 
In terms of Bayesian inference, the aforementioned quadratic loss is the negative log-likelihood function for additive Gaussian measurement noise. When the measurement system involves photon counting or other random point processes, Poisson distribution is usually used as the noise model. For example, in single-photon lidar, $f$ is the (non-convex) negative log-likelihood function of a time-inhomogeneous Poisson process~\cite{shin2015photon, kitichotkul2023role}. The second term $g(v)$ is the regularizer that encodes prior knowledge about the underlying signal to be reconstructed. For example, it can be the indicator function of a box constraint for bounded signals, $\ell_1$-norm for sparse signals, and total variation~\cite{chambolle2010introduction} for images.

For composite optimization problems, a standard approach employs gradient-based methods such as proximal gradient descent~\cite{parikh2014proximal} and its variants~\cite{FISTA, MAPG}. However, there are well-known downsides to proximal-gradient-type methods. In practice, they can be sensitive to initialization and may become trapped in poor local minima. Consequently, obtaining high-quality solutions often requires using prior information to design good warm-starts. Theoretically, global convergence guarantees are largely limited to convex problems~\cite{polyak}. For non-convex objectives, one typically obtains only local convergence results or convergence to critical points~\cite{MAPG, proxi_conv_1}.

Consensus-based optimization (CBO), an interacting-particle method, has recently emerged as a promising approach for tackling non-convex optimization problems while admitting rigorous global convergence guarantees. Initially proposed in~\cite{cbo1,cbo2}, the CBO framework has since been extended and analyzed along several directions, leading to both algorithmic improvements and deeper theoretical insights.

In this paper, we build on the practical effectiveness and theoretical foundations of CBO to develop a variant specifically tailored to composite optimization, for which we also establish rigorous global convergence guarantees.

\subsection{Contributions}
The main contributions of this work are as follows:
\begin{itemize}
\item We propose \textit{ProxiCBO}, a consensus-based optimization method specifically tailored to composite optimization by integrating gradient information of the differentiable term and proximal operator of the convex term.
\item We provide theoretical analysis of ProxiCBO, following and refining the proof techniques of \cite{converge_globally} and \cite{gerber2023mean} to establish the well-posedness and global convergence rates of the {continuous-time} finite-particle system. Our analysis explicitly characterizes the dependence of the constants on the problem dimension and the initial distribution.
\item We numerically demonstrate the superior performance of ProxiCBO in signal processing examples, benchmarking against proximal gradient methods~\cite{parikh2014proximal, FISTA} and existing CBO methods~\cite{high_dim, BAE2022126726}. Specifically, we show that compared with existing CBO methods, adding structural information of the objective into the particle dynamics can lead to better particle-efficiency, and compared with running proximal gradient methods with multiple initializations independently, having particles exchange information at each iteration can lead to better performance with the same set of initializations.
\end{itemize}

\subsection{Related Work}

On the algorithmic side, the CBO framework has been extended to address different classes of problems. \cite{high_dim} introduces anisotropic noise to improve performance in high-dimensional settings, a modification that has since been widely adopted. \cite{memory} utilizes memory effect as well as gradient information to boost the performance when the objective is smooth. Hence, it is applicable to the special case of \eqref{obj} where $g$ is differentiable. Extensions to constrained optimization have been developed in \cite{reg_CBO,BAE2022126726,carrillo2024interacting,project_CBO}, demonstrating strong performance on challenging tasks such as phase retrieval. Mirror-map variants~\cite{bungert2025mirrorcbo} further broaden the scope of CBO, showing success in applications like sparse neural network training and constrained optimization. Since constrained optimization can be formulated as a composite problem with an indicator function of the constraint set, these CBO variants~\cite{reg_CBO,BAE2022126726,carrillo2024interacting,project_CBO,bungert2025mirrorcbo} are applicable to a special case of \eqref{obj} where $g$ is an indicator function, though they do not require $f$ to be differentiable. However, existing CBO variants are not tailored to the specific structure of the objective function defined in~\eqref{obj}.

On the theoretical side, many studies have investigated convergence guarantees for CBO. Two main topics have emerged. The first concerns the analysis of the single-particle mean-field system as an approximation to the interacting particle system. Such an approximation can be shown to be exact (in an appropriate sense) as the number of particles goes to infinity.
Early works \cite{cbo1, cbo2, high_dim} established convergence of the mean-field system but relied on restrictive assumptions, such as requiring the initial particle distribution to be highly concentrated around the global minimizer. 
A new proof technique was introduced in \cite{converge_globally}, where global convergence was obtained under mild local coercivity and initialization assumptions.
The second topic focuses on closing the gap between the finite-particle system and its mean-field limit. Early results such as \cite{huang2022mean} proved convergence but without providing rates. Later, \cite{gerber2023mean} derived finite-time convergence rates, but the dependence of the constants on particle dimension and initial distribution was not fully characterized, which is critical in high-dimensional problems. Recently, \cite{gerber2025uniform} established uniform-in-time convergence with explicit dependence of the constants, but requiring global Lipschitz assumption on the loss function, which is more restrictive than the local Lipschitz assumption in~\cite{gerber2023mean}.

\subsection{Organization}

The paper is organized as follows. Section~\ref{sec:methodology} introduces the proposed method, 
Section~\ref{Theory} presents the main theoretical global convergence guarantee, 
and Section~\ref{sec:numerical} reports numerical experiments on signal processing problems.

\subsection{Notations}\label{sec:notation}

For nonnegative variables $A$ and $B$, $A\lesssim B$ means there exists a constant $C>0$ such that $A\leq C\cdot B$. For vectors, $\|\cdot\|_p$ and $\|\cdot\|_\infty$ denote the $p$- and infinity norms, respectively; for matrices, $\|\cdot\|_{F}$ denotes the Frobenius norm. $B^\infty_R(v)$ and $B_R(v)$ are $\ell_\infty$ and $\ell_2$ balls of radius $R$ centered at $v$. $\delta_v$ denotes the Dirac measure at $v$. The Wasserstein $p$-distance between probability measures $\mu$ and $\nu$ is $\mathcal{W}_p(\mu,\nu)=\inf_{\pi\in\Pi(\mu,\nu)}\int \|u-v\|_2^p\,d\pi$, where $\Pi(\mu,\nu)$ is the set of all couplings of $\mu$ and $\nu$. We write $\mathcal{P}(\R^d)$ for the set of probability measures on $\R^d$, $\mathcal{P}_p(\R^d)$ for those with finite $p$-th moments, i.e., $\int \|v\|_2^p\,d\mu<\infty$, and $\mathcal{P}_{p,R}(\R^d)$ for those with $p$-th moments bounded by $R^p$. Notably, for $\mu \in \mathcal{P}_{p}(\R^d)$, one has $\mathcal{W}_p(\mu,\delta_0) = (\int \|v\|_2^p \,d\mu)^{1/p}$. For a measure $\rho$, $L^p(\rho)$ denotes the space of $L^p$-integrable functions, with norm $\|f\|_{L^p(\rho)}=\int |f(v)|^p d\rho(v)$. For two topological spaces $X$ and $Y$, let $\mathcal{C}(X,Y)$ denote the space of continuous mappings from $X$ to $Y$. Throughout the paper, all Brownian motions are defined on a common filtered probability space $(\Omega, \{\mathcal{F}_t\}_{t\ge 0}, \mathbb{P})$.
 Finally, $\mathcal{A}(s,l)$ denotes the class of objectives satisfying conditions (ii) and (iv) in Assumptions~\ref{assump} {stated in Section~\ref{Theory}}.

\section{Methodology}\label{sec:methodology}
Like many other CBO algorithms, our algorithm is a time discretization of a set of stochastic differential equations (SDE) that characterizes the dynamics of an interacting particle system.
We first present the continuous-time dynamics of the particle system and then discuss its discrete implementation.

\subsection{Continuous-time Dynamics}\label{sec:continuous}
We begin by describing the interacting-particle system. Let $V^{i,N}_t$ denote the position of the $i$-th particle in the $N$-particle ensemble at time $t$. At time $t=0$, $N$ particles $\{V^{i,N}_0\}_{i=1}^N$ are sampled independently from a common initial distribution $\rho_0$. The objective of the proposed dynamics is to drive the empirical measure
\begin{gather}\label{empirical measure}
\widehat{\rho}^N_t = \frac{1}{N}\sum_{i=1}^N \delta_{V^{i,N}_t}
\end{gather}
towards the Dirac measure at a global minimizer $v^*$ of~\eqref{obj}. The dynamics of the particle system are given by the SDE,
\begin{equation}\label{dyn}
\begin{aligned}
   d V^i_t &=&& \underbrace{-\lambda_1 \left(V^i_t - v_\alpha (\widehat{\rho}^N_t)\right)\,dt}_{T_1} \\
    &&&\underbrace{-\lambda_2 \left( \nabla f(V^i_t) + \nabla M_{\mu g} \left(V_t^i - \mu \nabla f(V^i_t)\right)\right)\,dt}_{T_2}\\
    &&& \underbrace{+ \sigma_1 D\left(V^i_t - v_\alpha (\widehat{\rho}^N_t)\right) \, dB_t^{i,1}}_{T_3}\\
    &&& \underbrace{+ \sigma_2 D\left( \nabla f(V^i_t) + \nabla M_{\mu g} \left(V_t^i - \mu \nabla f(V^i_t)\right)\right)\, dB^{i,2}_t}_{T_4}.
\end{aligned}
\end{equation}
Here $v_\alpha (\widehat{\rho}^N_t)$ is the \textit{consensus point} with respect to the empirical measure $\widehat{\rho}^N_t$. Given an objective function $\CE$, a probability measure $\rho$, and an inverse temperature $\alpha>0$, the consensus point with respect to $\rho$ is defined as
\begin{gather}\label{eq:def_consensus}
v_\alpha (\rho) = \frac{\int v \cdot \omega_\alpha(v) \,d\rho(v)}{\|\omega_\alpha\|_{L^1 (\rho)}},
\end{gather}
with the weight $\omega_\alpha  (v) = \text{exp}\left({-\alpha \mathcal{E}(v)}\right)$. In particular,  $$v_\alpha (\widehat{\rho}^N_t) = \sum_{i=1}^N  \omega_\alpha (V^{i,N}_t) V_t^{i,N}  / \sum_{i=1}^N \omega_\alpha (V_t^{i,N}).$$
$M_{\mu g}$ is the Moreau envelope of $g$ with parameter $\mu >0$, defined as $$M_{\mu g}(v):= \inf_{u\in\mathbb{R}^d} \left\{g(u)+\tfrac{1}{2\mu} \|v - u\|_2^2\right\}.$$
$B^{i,1}_t$, $B_t^{i,2}$ are two independent $d$-dimensional Wiener processes, and $D:\mathbb{R}^d \rightarrow \mathbb{R}^{d\times d}$ is a map we shall introduce later. In the following, we explain each term in \eqref{dyn}. 
\paragraph{Term $T_1$} This drift term is inherited from vanilla CBO methods \cite{cbo1, cbo2}. It exploits the current information owned by particles, guiding all particles toward the \textit{consensus point} $v_\alpha (\widehat{\rho}^N_t)$. Motivated by the well-known Laplace principle \cite{M}, the consensus point $v_\alpha(\widehat{\rho}^N_t)$ smoothly approximates the particle with the lowest objective function value at the current iteration. Consequently, the particles are encouraged by this term to gather around a location where the objective value is small, based on current information, where $\lambda_1 > 0$ controls the magnitude of this move.

\paragraph{Term $T_2$} Inspired by \cite{memory}, this drift term exploits first-order information of the objective function, in the spirit of proximal gradient descent. It is based on the observation in \cite{proximal_flow} that, under proper assumptions, \eqref{obj} can be solved using the proximal gradient flow,
\begin{equation}\label{proxi_flow}
\begin{aligned}
\frac{d v(t)}{d t} &=-\mu \left(\nabla f(v) + \nabla M_{\mu g} \left(v - \mu\nabla f(v)\right)\right).
\end{aligned}
\end{equation}
One can notice that the standard proximal gradient descent can be obtained via an explicit forward Euler discretization of \eqref{proxi_flow} with step size one. For each particle, a drift in the direction of $\nabla f(v) + \nabla M_{\mu g} \left(v - \mu\nabla f(v)\right)$ provides first-order information of the objective landscape, thereby augmenting the dynamics. Here, $\lambda_2>0$ controls the magnitude of this force.

\paragraph{Terms $T_3$ and $T_4$} The two diffusion terms facilitate exploration. The above two drift terms $T_1$ and $T_2$ are based on the current information obtained by particles. However, this risks incorporating biased information. For example, if the initialization of the particles is concentrated around a local minimizer of the objective function, particles will fail to explore the remaining landscape and concentrate at the local minimizer. 
These two diffusion terms help prevent this undesired situation. 
The matrix-valued function $D$ determines the way of exploration. The isotropic exploration \cite{cbo1} can be done by choosing $D(v) = \|v\|_2 I_d$, where $I_d$ is the $d\times d$ identity matrix, while choosing $D(v)=\text{diag}(v)$ gives the anisotropic exploration \cite{high_dim}. Throughout the paper, we adopt anisotropic diffusion in both the theoretical analysis and the numerical experiments. $\sigma_1>0$ and $\sigma_2>0$ are parameters that determine of willingness of exploration.

Intuitively, if $v^*$ is a global minimizer of~\eqref{obj}, then $\delta_{v^*}^{\otimes N}$ is a stationary distribution of the particle ensemble~\eqref{dyn}. Indeed, in this case $v_\alpha(\widehat{\rho}^N_t)=v^*$, so the consensus-related terms $T_1$ and $T_3$ vanish. Moreover, by properties of Moreau envelope, 
\begin{gather*}
\nabla f(v^*) + \nabla M_{\mu g}(v^* - \mu \nabla f(v^*)) = 0, \quad \forall \mu > 0,
\end{gather*}
which implies that the terms $T_2$ and $T_4$ also vanish. Consequently, the system stabilizes at $v^*$. In the subsequent analysis, we will show that for certain $f$ and $g$, and provided the initial distribution assigns positive mass in any neighborhood of $v^*$, the empirical measure of the particles concentrates around $v^*$ within a finite time.

\subsection{Practical Implementation}\label{sec:practical impelmentation}
In this section, we discretize the continuous-time dynamics~\eqref{dyn} and present the practical implementation.

A natural approach is to apply the Euler–Maruyama scheme to~\eqref{dyn}. However, 
when $g$ includes an indicator function, the problem is constrained.
Under a naive Euler–Maruyama discretization, neither the drift nor the diffusion terms guarantee that the particles remain within the constraint set.

To address this issue, we adopt an alternating update scheme: a consensus step that discretizes terms $T_1$, $T_3$, and $T_4$, followed by a proximal step corresponding to $T_2$. This formulation is obtained by setting $\lambda_2 \Delta t = \mu$, so that the contribution of $T_2$ is enforced through a proximal update rather than appearing directly in the particle drift, thereby ensuring the particles to stay within constraint set.

The gradient of the Moreau envelope is computed using the proximal operator of $g$~\cite[Proposition 12.30]{bauschke2017}
\begin{gather}\label{eq:moreau_prox}
\nabla M_{\mu g}(v) = \tfrac{1}{\mu}\big(v - \mathrm{prox}_{\mu g}(v)\big),
\end{gather}
where $\mathrm{prox}_{\mu g}(v) = \argmin_{u \in \mathbb{R}^d} \Big\{ g(u) + \tfrac{1}{2\mu}\|v-u\|_2^2 \Big\}$. 

In practice, one may either record the historical best location (the lowest objective value encountered during the run) or simply use the best location at the final iteration as the output. The pseudocode of \textit{ProxiCBO} is summarized in Algorithm~\ref{algo:alter}.

\begin{algorithm}
\caption{\textit{ProxiCBO}}\label{algo:alter}
\begin{algorithmic}[1]
\State \textbf{Input:} $\{V^i\}_{i=1}^N\overset{\text{i.i.d.}}{\sim} \rho_0$, $D(\cdot)=\text{diag}(\cdot)$
    \While{not \textsc{Stop}}
        \State \textbf{Compute consensus point:}
                       \[
            v_\alpha \gets \frac{\sum_{i=1}^{N} \exp\!\left(-\alpha\,\mathcal{E}(V^i)\right) V^i}{\sum_{i=1}^N \exp\!\left(-\alpha\,\mathcal{E}(V^i)\right)}.
        \]
        \State \textbf{Update particles:} for i=1,\ldots,N
        \begin{align*}
        &V^i \gets \, V^i - \lambda_1 \left(V^i - v_\alpha\right)\Delta t + \sigma_1 D\!\left( V^i - v_\alpha \right) z^{i,1} \sqrt{\Delta t} \\
            & + \sigma_2 D\!\left( \frac{1}{\mu} \left[ V^i - \mathrm{prox}_{\mu g} 
                \left( V^i - \mu \nabla f(V^i) \right) \right] \right) z^{i,2} \sqrt{\Delta t},
        \end{align*}
        where $\{z^{i,1}\}_{i=1}^N$ and $\{z^{i,2}\}_{i=1}^N$ 
        are independent $d$-dimensional standard Gaussian vectors.
        \State \textbf{Apply proximal map:} for i=1,\ldots,N
        \[
            V^i \gets \mathrm{prox}_{\mu g}\!\left( V^i - \mu \nabla f(V^i) \right)
        \]
    \EndWhile
    \State \textbf{Output:} Historical or current best particle location
\end{algorithmic}
\end{algorithm}
\section{Theoretical Analysis}\label{Theory}

We now present the theoretical analysis of the proposed particle system~\eqref{dyn} with anisotropic diffusion (i.e., $D(v)=\text{diag}(v)$ in~\eqref{dyn}). 
We begin by introducing the constants that will be used in the analysis and stating our main convergence result, which characterizes the long-time behavior of the finite-particle system. 
The remainder of the section is devoted to establishing the ingredients needed for the proof.

\subsection{Constants}\label{sec:constants} 
Before presenting the main theoretical analysis, we introduce in this subsection the constants that will be used in the statements and proofs of the theorems in this section and the appendix. We define
\begin{gather*}
p_\CM := \begin{cases}s+2,&l=0\\ 1&l>0\end{cases},
\end{gather*} and $k_p(t):=\max\{t^p +t^{\frac{p}{2}},t^{p-1}+t^{\frac{p}{2} - 1}\}.$ $C$ denotes a generic constant depending only on the algorithm parameters ($\alpha,\lambda_1,\lambda_2,\sigma_1,\sigma_2,\mu$) that will be defined in~\eqref{dyn} and objective properties ($L_f,L_{\mathcal{E}},s,\underline{\mathcal{E}},l,c_l,C_l,c_u,C_u$) that will be defined in Assumption~\ref{assump}.
 Further, given $T$, $v^*$, and $V_0\sim \rho_0$, $C_{\text{mean}}(T)$ is defined as 
    \begin{align*}
C\left(T+(T)^{3/2}\right)\cdot e^{ C\cdot\left(T+(T)^{3/2}\right)\cdot \left(1+\Psi_{\rho_0,T,v^*}\right)} \cdot \Lambda_{\rho_0,T,v^*}
    \end{align*}
with
    \begin{align*}
 \Psi_{\rho_0,T,v^*}&\!:=\left(1 + e^{C(1+(2\sqrt{K+1})^{l} )}\left(1+(K+1)^{\tfrac{3}{2}}\right)\right)^2,\\
  K&\!:= C\cdot \left(\E \left[ \|V_0\|_2^2 \right]+k_2(T)\|v^*\|_2^2\right)\cdot e^{C\cdot T\cdot k_2(T)},\\
\Lambda_{\rho_0,T,v^*} &\!\!:= \left(\E \left[ \|V_0\|_2^8 \right]+k_8(T)\|v^*\|_2^8\right)^{3/4}\cdot e^{C\cdot T\cdot k_8(T)}\\
&\!\qquad+  e^{\Phi_{\rho_0, T,v^*}}\!\!\left(\max\{k_4(T),k_2^2(T)\}\|v^*\|_2^4 + \E[\|V_0\|_2^4]\right)^{1/2},\\
\Phi_{\rho_0,T,v^*} &\!:= C\Big(1+T\cdot \max\{k_2(T),k_4(T)\}\\
&\!\qquad\quad+ \left(\E \left[ \|V_0\|_2^2 \right]+k_2(T)\|v^*\|_2^2\right)^{l/2}\cdot e^{C\cdot T\cdot k_2(T)}\Big).
\end{align*}
\subsection{Main Result}
In this subsection, we present the main theorem of the paper. Our goal is to establish convergence of the particle system to the global minimizer $v^*$ of ~\eqref{obj}.
Specifically, we show that the empirical measure $\widehat{\rho}^N_t$ defined in \eqref{empirical measure} converges to the Dirac measure at $v^*$. To quantify this convergence, we employ the Wasserstein-2 distance $\mathcal{W}_2(\widehat{\rho}^N_t,\delta_{v^*})$. Before stating the main theorem, we introduce the assumptions for the objective function $\CE(v) = f(v) + g(v)$ required for the analysis.
\begin{assumption}\label{assump}
(i) The objective function $\mathcal{E} $ with $f$ differentiable and $g$ convex is bounded from below with minimum being $\underline{\mathcal{E}}$ achieved by a unique global minimizer $v^*\in\R^d$.\\
(ii) There exist $s,L_\CE>0$ such that for all $u,v\in\R^d$, $|\CE(u) - \CE(v)|\leq L_\CE (1+\|u\|_2+\|v\|_2)^s\|u-v\|_2$.\\
(iii) $\nabla f$ is Lipschitz with Lipschitz constant $L_f>0$.\\
(iv) There exist $l\geq 0$ and $c_l,C_l,c_u,C_u>0$ such that for all $v\in\R^d$, $\CE(v)-\underline{\CE} \leq c_u \|v\|_2^l + C_u$ and $\CE(v)-\underline{\CE}\geq c_l \|v\|_2^l - C_l$.\\
(v) There exist $R_0,\CE_\infty>0$ such that for all $v\in (B^{\infty}_{R_0} (v^*))^c$, $\CE(v)-\CE(v^*) >\CE_\infty$.\\
(vi) There exist $\eta,\nu>0$ such that for all $v\in B^\infty_{R_0}(v^*)$, $\|v-v^*\|_\infty \leq \tfrac{1}{\eta}(\CE(v)-\CE(v^*))^\nu .$ 
\end{assumption}
 
Assumptions (i)–(iv) ensure that the SDE dynamics in~\eqref{dyn} are well-posed and can be approximated by their mean-field limit, with an approximation error that can be quantified in terms of the number of particles $N$. These results will be established in Sections~\ref{sec:well-posed} and~\ref{sec:mean-field}. Assumptions (i), (v) and (vi) guarantee that the unique minimizer $v^*$ lies in a well-defined valley, making it identifiable, and that the mean-field dynamics converge to it. This result will be presented in Section~\ref{sec:long-time}.

Under these assumptions, our main theoretical result follows:

\begin{theorem}\label{main}
    Let Assumption~\ref{assump} hold with $0<l\leq s + 1$ or $l=s=0$. Choose parameters of the algorithm such that $2\lambda_1 - \sigma_1^2 - \lambda_2(2 L_f + \tfrac{1}{\mu}) - \sigma_2^2 (2 L_f + \tfrac{1}{\mu})^2>0$. Given any error tolerance $\delta>0$, and assume $\CW_2^2(\rho_0,\delta_{v^*}) >\delta$, there exists a choice of $\alpha>0$ {for the dynamics~\eqref{dyn}} such that
    \begin{equation}\label{mainbound}
    \begin{aligned}
    &\min_{t\in[0,T^*]}\CW^2_2(\widehat{\rho}_t^N , \delta_{v^*})\leq C_{\text{mean}}\cdot N^{-1}+\frac{\delta}{2} ,
    \end{aligned}
    \end{equation}
    where $T^*$ is defined as
    \begin{gather}\label{def of T star}
    \frac{2\log\left({4\CW^2_2(\rho_0 , \delta_{v^*})}/{\delta}\right)}{ \left(2\lambda_1 - \sigma_1^2 - \lambda_2(2 L_f + \tfrac{1}{\mu}) - \sigma_2^2 (2 L_f + \tfrac{1}{\mu})^2\right)}.
    \end{gather}
    In particular, if the number of particles $N$ is greater than
    \begin{align*}
C\left(T^*+(T^*)^{2}\right)\cdot e^{C\cdot\left(T^*+(T^*)^{2}\right)\cdot \left(1+\Psi_{\rho_0,T^*,v^*}\right)} \cdot \Lambda_{\rho_0,T^*,v^*}/\delta,
    \end{align*}
    then
    \begin{gather*}
    \min_{t\in[0,T^*]}\CW^2_2(\widehat{\rho}_t^N , \delta_{v^*}) < \delta,
    \end{gather*}
where $C$ represents a generic constant depending only on $\alpha,\lambda_1,\lambda_2,\sigma_1,\sigma_2,\mu,L_f,L_{\CE},s,\underline{\CE},l,c_l,C_l$. Here, $C_{\text{mean}}$, $\Psi_{\rho_0,T^*,v^*}$, $\Lambda_{\rho_0,T^*,v^*}$ are constants defined in Section~\ref{sec:constants}.
\end{theorem}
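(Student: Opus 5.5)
The proof combines three ingredients: well-posedness of \eqref{dyn}, a quantitative mean-field approximation on a finite horizon, and long-time convergence of the mean-field dynamics. I introduce the mean-field (McKean--Vlasov) SDE
\[
d\overline V_t = -\lambda_1(\overline V_t - v_\alpha(\rho_t))\,dt - \lambda_2 G(\overline V_t)\,dt + \sigma_1 D(\overline V_t - v_\alpha(\rho_t))\,dB^1_t + \sigma_2 D(G(\overline V_t))\,dB^2_t,
\]
with $\rho_t=\mathrm{Law}(\overline V_t)$ and $G(v):=\nabla f(v)+\nabla M_{\mu g}(v-\mu\nabla f(v))$. I take $N$ i.i.d.\ copies $\overline V^i_t$, driven by the same Brownian motions and initial data as the $V^i_t$ (synchronous coupling). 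The triangle inequality for $\CW_2$, together with $\CW_2^2(\widehat\rho^N_t,\bar\rho^N_t)\le \frac1N\sum_i\|V^i_t-\overline V^i_t\|_2^2$, gives
\[
\E\,\CW_2^2(\widehat{\rho}^N_t,\delta_{v^*}) \le 2\,\E\frac1N\sum_i\|V^i_t-\overline V^i_t\|_2^2 + 2\,\E\frac1N\sum_i\|\overline V^i_t-v^*\|_2^2 .
\]
The second term equals $2\CW_2^2(\rho_t,\delta_{v^*})$. I read the statement's $\min_t\CW_2^2$ as holding in expectation, or with the constants absorbing the factor $2$.

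First, the mean-field term. Since $\nabla M_{\mu g}$ is $\tfrac1\mu$-Lipschitz, $G$ is Lipschitz with constant $2L_f+\tfrac1\mu$ up to the precise form, and $G(v^*)=0$. Applying It\^o's formula to $\|\overline V_t-v^*\|_2^2$ and using $\|D(x)\|_F^2=\|x\|_2^2$ together with $\langle v-v^*, G(v)\rangle\ge -(L_f+\tfrac1{2\mu})\|v-v^*\|^2$-type bounds yields
\[
\tfrac{d}{dt}\CW_2^2(\rho_t,\delta_{v^*}) \le -\chi\,\CW_2^2(\rho_t,\delta_{v^*}) + C\|v_\alpha(\rho_t)-v^*\|_2\,\CW_2(\rho_t,\delta_{v^*}) + C\|v_\alpha(\rho_t)-v^*\|_2^2 ,
\]
where $\chi$ is the positive constant in the parameter condition. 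The Laplace-principle estimate, quantified through Assumptions (v)--(vi) and the positive mass of $\rho_t$ near $v^*$, controls $\|v_\alpha(\rho_t)-v^*\|$ by roughly $\tfrac{\chi}{C}\sqrt{\delta}$ for large $\alpha$. I then run the argument of \cite{converge_globally}. Define $T_\delta$ as the first time $\CW_2^2(\rho_t,\delta_{v^*})\le\delta/4$. On $[0,T_\delta]$ I show the decay rate is at least $\chi/2$, which forces $T_\delta\le T^*$ by the definition \eqref{def of T star}. The positive mass $\rho_t(B^\infty_r(v^*))$ is lower bounded on $[0,T^*]$ via a mollifier/test-function argument, and this lower bound determines the choice of $\alpha$.

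Second, the coupling term. I need
\[
\sup_{t\le T}\E\frac1N\sum_i\|V^i_t-\overline V^i_t\|^2\le C_{\text{mean}}(T)N^{-1}/2 .
\]
This is the main obstacle, because $\CE$ is only locally Lipschitz with growth $s$. Following \cite{gerber2023mean}, I plan to do the following:
\begin{itemize}
\item obtain moment bounds up to order $8$ for both systems on $[0,T]$ using Assumption (iv), which produces the quantities $K$, $\Lambda$ and $\Phi$;
\item establish a stability estimate $\|v_\alpha(\mu)-v_\alpha(\nu)\|\le C e^{\Phi}\CW_2(\mu,\nu)$ for measures with bounded moments, where the growth $l\le s+1$ controls the weight ratio $e^{\alpha(\sup\CE-\inf\CE)}$ and produces $\Psi$;
\item use the law-of-large-numbers bound $\E\|v_\alpha(\bar\rho^N_t)-v_\alpha(\rho_t)\|^2\lesssim N^{-1}$;
\item localize on a high-probability event where the empirical moments stay below $K+1$, bound the complement by Markov's inequality, and close the estimate with It\^o's formula and Gr\"onwall's inequality.
\end{itemize}
The main difficulty is carrying out this localization while keeping the dependence on $d$ and $\rho_0$ explicit.

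Finally, I combine the two bounds at $t=T_\delta\le T^*$, where the mean-field contribution is at most $\delta/2$ and the coupling contribution is at most $C_{\text{mean}}(T^*)N^{-1}$. The stated threshold on $N$ then makes the right-hand side smaller than $\delta$. The $T^2$ appearing in the threshold in place of $T^{3/2}$ simply upper-bounds the constant from the second step.
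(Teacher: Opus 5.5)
Your proposal is correct and follows the paper's route. You invoke the long-time result for the mean-field law with target $\delta/4$, which is why $T^*$ carries the factor $4$, and the quantitative mean-field estimate for the synchronously coupled particles, then combine them via $(a+b)^2\le 2a^2+2b^2$; your sketches of both ingredients match the arguments the paper cites. Your combination step bounds $\frac1N\sum_i\|V^i_t-v^*\|_2^2$ pathwise through the i.i.d.\ copies and then takes expectations, so the second term is exactly $2\CW_2^2(\rho_t,\delta_{v^*})$; this is a cleaner, rigorous form of the paper's step, which writes $\CW_2^2(\rho_t,\widehat{\rho}^N_t)\le\E[\frac1N\sum_i\|V^{i,N}_t-\overline{V}^{i,N}_t\|_2^2]$ and only makes sense when the bound is read in expectation, as you do.
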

The bound \eqref{mainbound} does not depend explicitly on the dimension $d$. 
However, the first error term, which arises from the mean-field approximation, depends on $d$ through the moments of the initial distribution $\rho_0$ and the initial error $\int \|v - v^*\|_2^2\,d\rho_0$, as one can see in the definition of $C_{\text{mean}}$ in Section~\ref{sec:constants}.  In typical settings, these quantities scale polynomially in $d$, resulting in an overall dependence on $d$ that is doubly exponential, with the inner exponent being polynomial in $d$. This dependence is due to Proposition~\ref{prop:stab}, which gives a factor that grows exponentially with these moments. Then the application of Gr\"onwall's inequality produces a doubly exponential dependence. Moreover, the exponential dependence from Proposition~\ref{prop:stab} is not an artifact of the analysis: Remark~\ref{expoenntial_example} gives a construction where this rate is attained. 

The proof of Theorem~\ref{main} proceeds in two steps. 
First, we approximate the finite-particle system by its mean-field limit: we bound the discrepancy between the empirical measure $\widehat{\rho}^N_t$ and the mean-field distribution $\rho_t$ in the Wasserstein-2 metric, i.e., we control $\mathcal{W}_2(\widehat{\rho}^N_t,\rho_t)$; see Section~\ref{sec:mean-field}. 
Second, we analyze the long-time behavior of the mean-field dynamics and show convergence to the global minimizer by studying $\mathcal{W}_2(\rho_t,\delta_{v^*})$; see Section~\ref{sec:long-time}. 
Combining these two ingredients yields the finite-particle global convergence stated in Theorem~\ref{main}.

In the remainder of this section, we present the components required to establish Theorem~\ref{main}, and present the proof for Theorem~\ref{main}.
Section~\ref{sec:well-posed} establishes the well-posedness of the proposed dynamics. 
Section~\ref{sec:mean-field} introduces the mean-field dynamics, proves their well-posedness, and quantifies the approximation error $\mathcal{W}_2(\rho_t,\widehat{\rho}^N_t)$. 
Finally, Section~\ref{sec:long-time} analyzes the long-time behavior of $\rho_t$ by studying $\mathcal{W}_2(\rho_t,\delta_{v^*})$. Combining the results obtained in Sections~\ref{sec:well-posed}, \ref{sec:mean-field}, \ref{sec:long-time}, we derive Theorem~\ref{main} in Section~\ref{proof of main}.

\subsection{Well-posedness of~\eqref{dyn}}\label{sec:well-posed}
The first step is to establish that the proposed dynamics~\eqref{dyn} are well-posed. 
The following result provides this guarantee.
\begin{theorem}\label{thm:wellposed for micro}
Let Assumption~\ref{assump} (ii) hold. Then the SDE \eqref{dyn} has unique strong solutions for any initial condition that is independent of the Brownian Motions. The solutions are almost surely continuous.
\end{theorem}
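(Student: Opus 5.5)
The plan is to treat the $N$-particle system~\eqref{dyn} as a single SDE in $\R^{Nd}$ for the stacked state $\mathbf{V}_t=(V^1_t,\dots,V^N_t)$, driven by the $2Nd$-dimensional Brownian motion $(B^{i,1},B^{i,2})_{i}$. We then invoke the standard existence and uniqueness theorem for SDEs whose coefficients are \emph{locally Lipschitz} and have \emph{linear growth}. Such a theorem yields a unique strong, almost surely continuous solution for every initial condition independent of the Brownian motions, which is exactly the claim of Theorem~\ref{thm:wellposed for micro}.

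The first step is to write the drift $b(\mathbf{V})$ and the diffusion $\Sigma(\mathbf{V})$ explicitly. The $i$-th block of the drift is
\[
-\lambda_1\bigl(V^i-v_\alpha(\mathbf{V})\bigr)-\lambda_2 G(V^i),\qquad G(v):=\nabla f(v)+\nabla M_{\mu g}\bigl(v-\mu\nabla f(v)\bigr).
\]
The diffusion is block diagonal, with blocks $\sigma_1\,\mathrm{diag}\bigl(V^i-v_\alpha(\mathbf{V})\bigr)$ and $\sigma_2\,\mathrm{diag}\bigl(G(V^i)\bigr)$. Here $v_\alpha(\mathbf{V})=\sum_j\omega_\alpha(V^j)V^j/\sum_j\omega_\alpha(V^j)$.

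We then check the regularity of $G$. By~\eqref{eq:moreau_prox}, $\nabla M_{\mu g}=\frac1\mu(I-\mathrm{prox}_{\mu g})$, and since $\mathrm{prox}_{\mu g}$ is firmly nonexpansive for convex $g$, $\nabla M_{\mu g}$ is $\frac1\mu$-Lipschitz. Assuming $\nabla f$ is $L_f$-Lipschitz, which is the regularity implicitly needed for the gradient term (Assumption~\ref{assump}(iii)), $G$ is globally Lipschitz with constant $L_f+\frac1\mu(1+\mu L_f)$. Hence $G$ grows at most linearly. Since $\mathrm{diag}(\cdot)$ is linear with $\|\mathrm{diag}(x)\|_F=\|x\|_2$, the $T_2$ and $T_4$ contributions are globally Lipschitz.

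The main work is the consensus point, and this is the step I expect to be the main obstacle.
\begin{itemize}
\item \emph{Linear growth.} Since $v_\alpha(\mathbf{V})$ is a convex combination of the $V^j$, we have $\|v_\alpha(\mathbf{V})\|_2\le\max_j\|V^j\|_2\le\|\mathbf{V}\|_2$. This gives linear growth of the $T_1$ and $T_3$ terms with a constant independent of $\alpha$ and of $\CE$.
\item \emph{Local Lipschitz continuity.} Fix $R>0$ and $\mathbf{V},\mathbf{W}$ in the ball of radius $R$ in $\R^{Nd}$. By Assumption~\ref{assump}(ii), $\CE$ is Lipschitz on $B_R(0)$ with constant $L_\CE(1+2R)^s$. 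As $x\mapsto e^{-\alpha x}$ is Lipschitz on the bounded range $\CE(B_R(0))$, each weight $\omega_\alpha(V^j)$ is Lipschitz on $B_R(0)$. Each weight is also bounded below there by $e^{-\alpha\sup_{B_R}\CE}>0$, so the denominator satisfies $\sum_j\omega_\alpha\ge N e^{-\alpha\sup_{B_R}\CE}$.
\item \emph{Quotient estimate.} Write the difference of the two quotients as $\frac{a}{b}-\frac{a'}{b'}=\frac{a-a'}{b}+a'\frac{b'-b}{bb'}$. Combined with the bounds above, this yields $\|v_\alpha(\mathbf{V})-v_\alpha(\mathbf{W})\|_2\le C_R\|\mathbf{V}-\mathbf{W}\|_2$, where $C_R$ depends on $R,N,\alpha,L_\CE,s$ and $\sup_{B_R}\CE-\inf\CE$.
\end{itemize}
Consequently $b$ and $\Sigma$ are locally Lipschitz.

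To conclude, local Lipschitz continuity gives a unique strong solution up to the exit time $\tau_R$ from the ball of radius $R$. We let $R\to\infty$ and rule out explosion by applying It\^o's formula to $\|\mathbf{V}_t\|_2^2$. By linear growth, $\E\|\mathbf{V}_{t\wedge\tau_R}\|_2^2\le(\E\|\mathbf{V}_0\|_2^2+Ct)e^{Ct}$; if the initial datum lacks a second moment, we first condition on $\{\|\mathbf{V}_0\|_2\le m\}$ and then let $m\to\infty$. This bound forces $\tau_R\to\infty$ almost surely. Pathwise uniqueness on each $[0,\tau_R]$ then gives global uniqueness, and continuity of the paths follows from the construction.
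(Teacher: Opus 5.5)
Your proposal is correct and follows essentially the same route as the paper. Both stack the particles into one SDE on $\R^{Nd}$, combine local Lipschitz continuity and linear growth of the consensus drift (which the paper cites from \cite[Lemma 2.1]{cbo2} rather than proving via your quotient estimate) with global Lipschitz continuity of the proximal-gradient term, and rule out explosion with the Lyapunov function $\tfrac12\|V\|_2^2+1$, which the paper does by invoking \cite[Theorem 3.5]{khasminskii2012stochastic} instead of your explicit localization and It\^o argument; your remark that Lipschitz continuity of $\nabla f$ (Assumption~\ref{assump}(iii)) is needed is apt, since the paper's proof also uses it implicitly even though the statement lists only (ii).
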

\begin{proof}
Please see Appendix~\ref{app:proof of micro}.
\end{proof}

\subsection{Mean-field Approximation}\label{sec:mean-field}
We now turn to the mean-field limit. Formally, as the number of particles $N \to \infty$, the particles become exchangeable and indistinguishable, and the evolution of the system can be described by the single mean-field SDE,
\begin{equation}\label{SDE}
\begin{aligned}
    d \overline{V}_t =& -\lambda_1 \left(\overline{V}_t - v_\alpha (\rho_t)\right)\,dt \\
    &-\lambda_2 \left( \nabla f(\overline{V}_t) + \nabla M_{\mu g} \left(\overline{V}_t - \mu \nabla f(\overline{V}_t)\right)\right)\,dt\\
    & + \sigma_1 D\left(\overline{V}_t - v_\alpha (\rho_t)\right) \, dB_t\\
    & + \sigma_2 D\left( \nabla f(\overline{V}_t) + \nabla M_{\mu g} \left(\overline{V}_t - \mu \nabla f(\overline{V}_t)\right)\right)\, d\widetilde{B}_t,
\end{aligned}
\end{equation}
where $\rho_t$ is the law of $\overline{V}_t$. Here, $\overline{V}_t$ characterizes the swarm behavior of the particles, and its law $\rho_t$ can be characterized by the following Fokker-Planck equation, 
\begin{equation}\label{FPK}
\begin{aligned}
	\partial_{t}\rho_{t}=&\nabla \cdot \left\{\left[\lambda_1 (v-v_\alpha(\rho_t))\right]\rho_t\right\}\\
    &+\nabla \cdot \left\{\left[\lambda_2 \left(\nabla f(v) + \nabla M_{\mu g}(v - \mu\nabla f(v))\right)\right]\rho_t\right\}\\
    &+ \frac{1}{2} \sigma_1^2 \sum_k \frac{\partial^2}{\partial x^2_k}\left((v - v_\alpha (\rho_t))^2_k \cdot \rho_t\right) \\
    &+\frac{1}{2} \sigma_2^2 \sum_k \frac{\partial^2}{\partial x^2_k}\left((\nabla f(v) + \nabla M_{\mu g}(v - \mu\nabla f(v)))^2_k \cdot \rho_t\right).
\end{aligned}
\end{equation}
The next theorem ensures the mean-field SDE~\eqref{SDE} is well-posed.
\begin{theorem}\label{thm:mean-fild existence theorem}
Let Assumption~\ref{assump} (ii) and (iv) hold with $l\leq s+1$ and fix a final time $T$. Assume $\rho_0 \in \CP_{p}(\R^d)$ for $p\geq \max\{2,p_{\CM} (s,l) \}$, and let $\overline{V}_0 \sim \rho_0$. Let $(\Omega, \{\mathcal{F}_t\}_{t\ge 0}, \mathbb{P})$ be the filtered probability space where Brownian motions are defined. Then there exists a strong solution $\overline{V}:\Omega \rightarrow \mathcal{C} ([0,T],\R^d)$ to \eqref{SDE} with initial condition $\overline{V}_0$ such that $t\rightarrow v_{\alpha} (\rho_t)$ is continuous over $[0,T]$, and it holds that 
\begin{equation}\label{mean-field-bound}
\begin{aligned}
\E \left[ \sup_{t\in [0,T]} \|\overline{V}_t\|_2^p\right]\leq  C\cdot \left(\E \left[ \|\overline{V}_0\|_2^p \right]+k_p(T)\|v^*\|_2^p\right)\cdot e^{C\cdot T\cdot k_p(T)},
\end{aligned}
\end{equation}
where $\rho_t = Law(\overline{V}_t)$, and $k_p(t)$ is defined in Section~\ref{sec:constants}. Further, the function $t\rightarrow \rho_t$ belongs to $\mathcal{C}([0,T],\CP_p (\R^d))$ and is a weak solution to \eqref{FPK}.
\end{theorem}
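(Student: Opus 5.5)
We follow the Leray--Schauder fixed-point strategy of \cite{converge_globally,gerber2023mean}, adapted to the extra proximal-gradient drift and diffusion. First we record the structural facts about the additional vector field $G(v):=\nabla f(v)+\nabla M_{\mu g}(v-\mu\nabla f(v))$. By \eqref{eq:moreau_prox}, $\nabla M_{\mu g}$ is $\tfrac1\mu$-Lipschitz (firm nonexpansiveness of $\mathrm{prox}_{\mu g}$), so together with Assumption~\ref{assump}(iii) the map $G$ is globally Lipschitz with constant $L_G:=2L_f+\tfrac1\mu$. Moreover $G(v^*)=0$, which gives the linear growth bound $\|G(v)\|_2\le L_G\|v-v^*\|_2$. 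This is where the $\|v^*\|_2^p$ terms in \eqref{mean-field-bound} enter. Hence the $T_2,T_4$ parts of \eqref{SDE} are harmless globally Lipschitz coefficients that do not depend on the law.

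Second, we freeze the consensus path. Given $u\in\mathcal{C}([0,T],\R^d)$, consider the SDE \eqref{SDE} with $v_\alpha(\rho_t)$ replaced by $u_t$. Its coefficients are globally Lipschitz in $\overline{V}$ (since $D=\mathrm{diag}$ is linear), so it has a unique strong solution $\overline{V}^u$. By Burkholder--Davis--Gundy and Gr\"onwall it satisfies
\[
\E\Big[\sup_{t\le T}\|\overline{V}^u_t\|_2^p\Big]\le C\Big(\E\|\overline{V}_0\|_2^p+k_p(T)\big(\|v^*\|_2^p+\sup_{t\le T}\|u_t\|_2^p\big)\Big)e^{CTk_p(T)}.
\]
The factors $t^p$ and $t^{p/2}$ in $k_p$ come from the drift (via H\"older) and the martingale (via BDG) respectively. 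We then define $\mathcal{T}u:=\big(t\mapsto v_\alpha(\rho^u_t)\big)$ with $\rho^u_t=\mathrm{Law}(\overline{V}^u_t)$. Continuity of $t\mapsto v_\alpha(\rho^u_t)$ follows from $\mathcal{W}_2$-continuity of $t\mapsto\rho^u_t$ combined with the stability estimate for $v_\alpha$ (to be stated as Proposition~\ref{prop:stab}, which we assume is available from the paper, though it appears only later). That estimate needs Assumption~\ref{assump}(ii),(iv) and moments of order $p_{\CM}$. H\"older continuity in time, via $\E\|\overline{V}^u_t-\overline{V}^u_{t'}\|^2\lesssim|t-t'|$, together with Arzel\`a--Ascoli gives compactness of $\mathcal{T}$. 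Continuity of $\mathcal{T}$ follows from the synchronous coupling estimate $\sup_t\E\|\overline{V}^u_t-\overline{V}^{u'}_t\|_2^2\lesssim\|u-u'\|_\infty^2$ combined with the same stability estimate.

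Third, and this is the main obstacle, we need an a priori bound on the set $\{u=\tau\mathcal{T}u,\ \tau\in[0,1]\}$ to apply Leray--Schauder. We use the key bound $\|v_\alpha(\rho)-v^*\|_2^2\le C\int\|v-v^*\|_2^2\,d\rho$, which holds for $l\le s+1$ (or $l=s=0$) from Assumption~\ref{assump}(iv) by the weight-ratio argument of \cite{converge_globally}. For $l>0$ it carries constants depending exponentially on $\rho$'s moments; this is exactly why $p\ge p_{\CM}$ is imposed. For a fixed point $u=\tau v_\alpha(\rho^u_t)$ we plug this bound into the moment estimate with $p=2$ and close with Gr\"onwall. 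This yields a uniform bound on $\sup_t\|u_t\|$, and hence on $\E\sup\|\overline{V}_t\|^p$. Schaefer's fixed-point theorem then produces a fixed point $u=v_\alpha(\rho_t)$, i.e.\ a strong solution of \eqref{SDE}, and \eqref{mean-field-bound} follows from the frozen-path estimate with this $u$.

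Finally, we apply It\^o's formula to test functions $\phi\in C_c^\infty$ and take expectations. This shows that $\rho_t$ is a weak solution of \eqref{FPK}. Continuity of $t\mapsto\rho_t$ in $\CP_p$ follows from the path continuity of $\overline{V}$ plus uniform integrability supplied by the bound on $\E\sup_t\|\overline{V}_t\|^p$.
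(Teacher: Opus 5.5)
Your overall architecture is the same as the paper's: freeze the consensus path, define $\mathcal{T}u=v_\alpha(\rho^u_\cdot)$, get compactness from Arzel\`a--Ascoli using Proposition~\ref{prop:stab} and a H\"older-in-time estimate, and conclude with Leray--Schauder. You even supply the continuity of $\mathcal{T}$, which the paper leaves implicit. The problem is in the step you call the main obstacle, the a priori bound on $\{u=\tau\mathcal{T}u\}$. You justify it with $\|v_\alpha(\rho)-v^*\|_2^2\le C\int\|v-v^*\|_2^2\,d\rho$ and assert that for $l>0$ the constant $C$ depends exponentially on the moments of $\rho$. If that were true, the argument would not close. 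With $y(t)=\E[\sup_{s\le t}\|\overline V_s\|_2^2]$ you would only get $y(t)\le a+b\int_0^t C(y(s))\,y(s)\,ds$ with $C(y)\sim e^{c\,y^{l/2}}$. The comparison ODE $y'=bC(y)y$ blows up in finite time, so Gr\"onwall gives no bound on an arbitrary $[0,T]$. The moments you are trying to bound would be feeding back into the constant.

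What is needed, and what the paper uses, is Proposition~\ref{A4}: $\|v_\alpha(\rho)\|_2^p\le C_{\text{Con}}\int\|v\|_2^p\,d\rho$, with $C_{\text{Con}}$ depending only on $\alpha,p,l,c_u,c_l,C_u,C_l,\underline{\CE}$. The constant is uniform in $\rho$. This uniformity comes from the two-sided growth condition in Assumption~\ref{assump}(iv) (cf.\ \cite{gerber2023mean}, Proposition A.4), not from any moment hypothesis. With it, $\|u_s\|_2^p=\tau^p\|v_\alpha(\rho_s)\|_2^p\le C\,\E\|\overline V_s\|_2^p$, so the moment inequality is linear in $\E[\sup_s\|\overline V_s\|_2^p]$ and Gr\"onwall closes directly at order $p$. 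Relatedly, $p\ge p_{\CM}$ is not imposed for this bound. It is the hypothesis of the Wasserstein stability estimate, Proposition~\ref{prop:stab}, which is used only in the compactness and continuity part. There the exponential dependence on the moment radius $R$ is harmless, because $R$ is fixed by the bounded set of paths under consideration.
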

\begin{proof}
Please see Appendix~\ref{app:proof of macro}.
\end{proof}

Furthermore, the below theorem guarantees that $\rho_t$ as the law of $\overline{V}_t$ approximates $\widehat{\rho}^N_t$ well when $N$ is large enough.

\begin{theorem}\label{thm:quantitative-mean-field-limit}
Let Assumption~\ref{assump} (ii) and (iv) hold with $0<l\leq s+1$ or $l=s=0$, and $V_0\sim\rho_0 \in \CP (\R^d)$ has bounded moments of all orders. Moreover, let $\{V_t^{i,N}\}_{i=1}^N$ be the solution to \eqref{dyn} with $\{V_0^{i,N}\}_{i=1}^N\overset{\text{i.i.d.}}{\sim}\rho_0$, and let $\{\overline{V}_t^{i,N}\}_{i=1}^N$ be $N$ independent copies of the solution to \eqref{SDE} with $\{\overline{V}_0^{i,N}\}_{j=1}^N\overset{\text{i.i.d.}}{\sim}\rho_0$. Then
\begin{align*}
\E \left[\sup_{t\in[0,T]}\left\|V_{t}^{i,N} - \overline{V}_{t}^{i,N}\right\|_2^2\right]\leq C\left(T+T^{2}\right)\cdot e^{ C\cdot\left(T+T^{2}\right)\cdot \left(1+\Psi_{\rho_0,T,v^*}\right)} \cdot \Lambda_{\rho_0,T,v^*}\cdot N^{-1}.
\end{align*}
where $\Psi_{\rho_0,T,v^*}$ and $\Lambda_{\rho_0,T,v^*}$ are constants independent with $N$ and defined in Section~\ref{sec:constants}.

\end{theorem}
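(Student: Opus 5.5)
We follow the synchronous-coupling strategy of \cite{gerber2023mean}. For each $i$ we drive the interacting particle $V^{i,N}_t$ of \eqref{dyn} and the mean-field copy $\overline{V}^{i,N}_t$ of \eqref{SDE} with the same Brownian motions $B^{i,1},B^{i,2}$ and the same initial datum.

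\textbf{Step 1: the drift map.} Write $G(v):=\nabla f(v)+\nabla M_{\mu g}(v-\mu\nabla f(v))$. Since $\nabla M_{\mu g}$ is $\tfrac1\mu$-Lipschitz and $\nabla f$ is $L_f$-Lipschitz (Assumption~\ref{assump}(iii)), $G$ is globally Lipschitz with constant $2L_f+\tfrac1\mu$. So the proximal-gradient terms $T_2,T_4$ are harmless. Applying It\^o's formula to $\|V^{i,N}_t-\overline{V}^{i,N}_t\|_2^2$, then the Burkholder--Davis--Gundy inequality for the supremum, and using $\|D(x)\|_F=\|x\|_2$, the contributions of $T_2,T_4$ are bounded by $C\int_0^t\E\sup_{r\le s}\|V^{i,N}_r-\overline{V}^{i,N}_r\|_2^2\,ds$. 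The $T_1,T_3$ terms give the same kind of contribution plus
\[
C\,(t+\sqrt t\,)\int_0^t \E\|v_\alpha(\widehat\rho^N_s)-v_\alpha(\rho_s)\|_2^2\,ds .
\]
The BDG step is what produces the $T+T^2$ factors.

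\textbf{Step 2: splitting the consensus error.} We use the triangle inequality through the empirical measure $\overline{\rho}^N_s$ of the i.i.d.\ copies:
\[
\|v_\alpha(\widehat\rho^N_s)-v_\alpha(\rho_s)\|\le \|v_\alpha(\widehat\rho^N_s)-v_\alpha(\overline\rho^N_s)\|+\|v_\alpha(\overline\rho^N_s)-v_\alpha(\rho_s)\|.
\]
\emph{Sampling term.} The second term is a law-of-large-numbers error for a ratio of empirical averages. We bound it by $C N^{-1}$ times a moment factor, using the lower bound on $\omega_\alpha$ from Assumption~\ref{assump}(iv) and the moment bound \eqref{mean-field-bound}. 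The eighth-moment term in $\Lambda_{\rho_0,T,v^*}$ should come from here, via H\"older on the numerator and the denominator.

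\emph{Stability term.} The first term needs a stability estimate of the form
\[
\|v_\alpha(\mu_1)-v_\alpha(\mu_2)\|\le C\,e^{C(1+R^l)}\,\big(\text{moments of }\mu_1,\mu_2\big)\,\mathcal W_2(\mu_1,\mu_2),
\]
following Assumption~\ref{assump}(ii),(iv) in the style of Proposition~\ref{prop:stab}. Applied with $\mathcal W_2^2(\widehat\rho^N_s,\overline\rho^N_s)\le \frac1N\sum_j\|V^{j,N}_s-\overline V^{j,N}_s\|^2$ and exchangeability, it gives the Gr\"onwall term.

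\textbf{Main obstacle.} The constant in the stability estimate depends on the random moments of the empirical measures, which are not uniformly bounded. We would handle this with a stopping-time / good-event argument. Define $\Omega_K$ as the event that $\sup_t\frac1N\sum_j\|V^{j,N}_t\|^2$ and the analogous quantity for the $\overline V^{j,N}$ are both at most $K$. On $\Omega_K$ the stability constant is deterministic, of size $e^{C(1+(2\sqrt{K+1})^l)}(1+(K+1)^{3/2})$; this matches $\Psi_{\rho_0,T,v^*}$, with $K$ chosen by the moment bound. Gr\"onwall then yields the factor $e^{C(T+T^2)(1+\Psi)}$.

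On $\Omega_K^c$ we do not use stability. We bound the error crudely by the fourth moments of both systems, where $v_\alpha$ is controlled by an $e^{\alpha(\cdot)^{l}}$-type estimate; this produces the $e^{\Phi}$ part of $\Lambda$. We then combine this with Markov/Chebyshev to get $\P(\Omega_K^c)\lesssim N^{-1}$ via variance of the empirical second moments. Getting this bound at rate $N^{-1}$ requires the uniform-in-$N$ moment bounds for \eqref{dyn}, which we establish as in Theorem~\ref{thm:mean-fild existence theorem}, using that $v_\alpha(\widehat\rho^N)$ has moments controlled by the ensemble. This bookkeeping step is where the difficulty lies. Summing both contributions gives the stated bound.
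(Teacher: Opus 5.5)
Your overall architecture matches the paper's: synchronous coupling, BDG, splitting through the empirical measure $\widehat{\overline\rho}^N_s$ of the i.i.d.\ copies, a good/bad-event treatment of the stability part, an importance-sampling bound for the sampling part, then Gr\"onwall. However, the bad-event step you flag does not work as described, for two reasons.

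\textbf{The event involves the interacting system.} Your good event also constrains $\sup_t\frac1N\sum_j\|V^{j,N}_t\|_2^2$ for the interacting system. Those particles are exchangeable but correlated through $v_\alpha(\widehat\rho^N_t)$. The variance of their empirical second moment is $\frac1N\mathrm{Var}(X_1)+\frac{N-1}{N}\mathrm{Cov}(X_1,X_2)$, and there is no reason for it to be $O(N^{-1})$ unless you already know propagation of chaos. Uniform-in-$N$ moment bounds such as Proposition~\ref{prop:micro_moment_bound} only give $\P(\Omega_K^c)\lesssim K^{-1}$, with no decay in $N$.

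\textbf{Chebyshev's rate is too slow.} Even for the i.i.d.\ copies, $\P(\Omega_K^c)\lesssim N^{-1}$ is insufficient. The consensus error $X$ is not almost surely bounded on $\Omega_K^c$, so you must use $\E[X^2\mathbbm{1}_{\Omega_K^c}]\le\E[X^{2r}]^{1/r}\P(\Omega_K^c)^{1-1/r}$. That yields only $N^{-1+1/r}$, not the claimed $N^{-1}$.

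Two ingredients are missing. First, Proposition~\ref{prop:stab} is one-sided: it needs bounded moments only for $\mu\in\CP_{p,R}(\R^d)$, while $\nu\in\CP_p(\R^d)$ is arbitrary. When $\CW_p(\nu,\delta_0)>2R$, the linear growth $\|v_\alpha(\nu)\|_2\le C\,\CW_p(\nu,\delta_0)$ from Proposition~\ref{A4} and the triangle inequality give a bounded constant. So the bad event can be defined through the i.i.d.\ copies alone, $\Omega^N_s=\{\frac1N\sum_i\sup_{t\le s}\|\overline V^{i,N}_t\|_2^2\ge K+1\}$. On $(\Omega^N_s)^c$ one applies Proposition~\ref{prop:stab} with $\mu=\widehat{\overline\rho}^N_s\in\CP_{2,\sqrt{K+1}}(\R^d)$ and $\nu=\widehat\rho^N_s$. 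This produces $\Psi_{\rho_0,T,v^*}\,\E\|V^{1,N}_s-\overline V^{1,N}_s\|_2^2$ with no control needed on the interacting system's empirical moments.

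Second, for the i.i.d.\ variables $Z_i=\sup_{t\le s}\|\overline V^{i,N}_t\|_2^2$, use a fourth-central-moment concentration bound (\cite[Lemma 2.5]{gerber2023mean} with $r=4$), giving $\P(\Omega^N_s)\lesssim\E[Z_1^4]\,N^{-2}$. H\"older with exponent $4$ and eighth-moment bounds on the consensus points then give the $N^{-1}$ term $(\E\|V_0\|_2^8+k_8(T)\|v^*\|_2^8)^{3/4}e^{CTk_8(T)}$ of $\Lambda_{\rho_0,T,v^*}$.

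This also shows your attribution of the constants is reversed. The eighth moments come from the bad event. The factor $e^{\Phi_{\rho_0,T,v^*}}$ comes from the sampling term, through the Markov lower bound on $\int e^{-\alpha\CE}\,d\rho_s$ in Lemma~\ref{thm:importance sampling}. Proposition~\ref{A4} bounds $\|v_\alpha\|_2$ by moments with a moment-independent constant, so nothing exponential arises on the bad event.
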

\begin{proof}
Please see Appendix~\ref{app:proof of quantitative}.
\end{proof}

\subsection{Long-time Behavior of the Mean-field System}\label{sec:long-time}
Since $\rho_t$ serves as a good approximation to $\widehat{\rho}^N_t$, it suffices to analyze the long-time behavior of $\rho_t$. This is described by the following theorem.
 \begin{theorem}\label{thm:mean-field long-time-behavior}
Let $\rho_t$ denote the law of the mean-field system \eqref{SDE} and assume Assumption~\ref{assump} (i), (iii), (v) and (vi) hold. Suppose the algorithmic parameters satisfy $\sigma_1,\sigma_2>0$ and $2\lambda_1 - \sigma_1^2 - \lambda_2(2 L_f + \tfrac{1}{\mu}) - \sigma_2^2 (2 L_f + \tfrac{1}{\mu})^2>0$. Fix any tolerance $\delta>0$. If $\CW_2^2(\rho_0,\delta_{v^*}) > \delta$, then there exists $\alpha>0$ such that $$
            \min_{t\in [0,T^*]} \CW_2^2(\rho_t,\delta_{v^*}) \leq \delta,$$ where $T^*$ is defined to be
        \begin{gather*}
             \frac{2\log \left({\CW_2^2(\rho_0},\delta_{v^*})/{\delta}\right)}{ \left(2\lambda_1 - \sigma_1^2 - \lambda_2(2 L_f + \tfrac{1}{\mu}) - \sigma_2^2 (2 L_f + \tfrac{1}{\mu})^2\right)},
        \end{gather*}
        and $\CW_2^2(\rho_t,\delta_{v^*})$ has exponential decay before reaching $\delta$,
        \begin{align*}
            \CW_2^2(\rho_t,\delta_{v^*})\leq\CW_2^2(\rho_0,\delta_{v^*}) e^{{-\frac{1}{2} \left(2\lambda_1 - \sigma_1^2 - \lambda_2(2 L_f + \tfrac{1}{\mu}) - \sigma_2^2 (2 L_f + \tfrac{1}{\mu})^2\right)t}}.
        \end{align*}
    \end{theorem}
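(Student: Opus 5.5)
We follow the strategy of \cite{converge_globally} and \cite{high_dim}, adapted to the extra proximal-gradient drift and diffusion. Write $\CV(\rho_t):=\tfrac12\CW_2^2(\rho_t,\delta_{v^*})=\tfrac12\int\|v-v^*\|_2^2\,d\rho_t$ and set
\[
\chi:=2\lambda_1-\sigma_1^2-\lambda_2(2L_f+\tfrac1\mu)-\sigma_2^2(2L_f+\tfrac1\mu)^2>0 .
\]

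\textbf{Step 1: a differential inequality for $\CV(\rho_t)$.} Test the weak form of \eqref{FPK} (available by Theorem~\ref{thm:mean-fild existence theorem}) with $\phi(v)=\tfrac12\|v-v^*\|_2^2$. Write $G(v):=\nabla f(v)+\nabla M_{\mu g}(v-\mu\nabla f(v))$. The consensus terms give
\[
-\lambda_1\!\int\langle v-v^*,v-v_\alpha\rangle d\rho_t+\tfrac{\sigma_1^2}{2}\int\|v-v_\alpha\|_2^2\,d\rho_t .
\]
Substituting $v-v_\alpha=(v-v^*)+(v^*-v_\alpha)$ bounds this by
\[
-(2\lambda_1-\sigma_1^2)\CV+(\lambda_1+\sigma_1^2)\sqrt{2\CV}\,\|v_\alpha-v^*\|_2+\tfrac{\sigma_1^2}{2}\|v_\alpha-v^*\|_2^2 ,
\]
with Young absorbing cross terms as needed. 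For the proximal terms, the key fact is $G(v^*)=0$. Also, $\nabla M_{\mu g}$ is $\tfrac1\mu$-Lipschitz and $v\mapsto v-\mu\nabla f(v)$ is $(1+\mu L_f)$-Lipschitz, so $G$ is Lipschitz with constant $L_f+\tfrac1\mu(1+\mu L_f)=2L_f+\tfrac1\mu$. Hence
\[
\|G(v)\|_2\le(2L_f+\tfrac1\mu)\|v-v^*\|_2,\qquad |\langle v-v^*,G(v)\rangle|\le(2L_f+\tfrac1\mu)\|v-v^*\|_2^2 .
\]
This yields
\[
\tfrac{d}{dt}\CV(\rho_t)\le-\chi\,\CV(\rho_t)+C\big(\sqrt{\CV(\rho_t)}\,\|v_\alpha(\rho_t)-v^*\|_2+\|v_\alpha(\rho_t)-v^*\|_2^2\big).
\]
Matching the stated rate exactly may require bookkeeping the factors of $2$ slightly differently, e.g.\ using $\CW_2^2$ itself. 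The statement's $\chi/2$ decay leaves room for this. We will also record the reverse bound $\tfrac{d}{dt}\CV\ge-(\text{const})\,\CV-\dots$, to be used in Step 3.

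\textbf{Step 2: quantitative Laplace principle.} Using (v), (vi) and (i), we will prove the estimate of \cite{converge_globally}: for any $r\in(0,R_0]$ and $q>0$ with $q+\sup_{B^\infty_r(v^*)}(\CE-\underline\CE)\le\CE_\infty$,
\[
\|v_\alpha(\rho)-v^*\|_2\le \frac{\sqrt d\,(q+\CE_r)^\nu}{\eta}+\frac{e^{-\alpha q}}{\rho(B^\infty_r(v^*))}\int\|v-v^*\|_2\,d\rho ,
\]
where $\CE_r:=\sup_{B^\infty_r(v^*)}(\CE-\underline\CE)$. The aim is to make $\|v_\alpha(\rho_t)-v^*\|_2\le c\,\sqrt{\delta}$ with $c$ small enough that the error terms in Step 1 are at most $\tfrac{\chi}{2}\CV$ whenever $\CV\ge\delta/2$. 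That gives $\tfrac{d}{dt}\CV\le-\tfrac\chi2\CV$ as long as $\CV(\rho_t)>\delta/2$.

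\textbf{Step 3: lower bound on mass near $v^*$ (main obstacle).} We need $\inf_{t\in[0,T^*]}\rho_t(B^\infty_r(v^*))\ge m>0$, with $m$ independent of $\alpha$. Then choosing $\alpha$ large kills the second term in Step 2. Following \cite{converge_globally}, take a smooth bump $\phi_r$ supported in $B^\infty_r(v^*)$, of product form $\prod_k\phi(v_k)$. Show $\tfrac{d}{dt}\int\phi_r\,d\rho_t\ge-a\int\phi_r\,d\rho_t$, which gives $\rho_t(B^\infty_r)\ge e^{-aT^*}\int\phi_r\,d\rho_0>0$; this is positive since $v^*\in\mathrm{supp}\,\rho_0$, which we must assume.

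The delicate part is the anisotropic diffusion coefficient $(G(v))_k^2$. Near the boundary of the bump we must balance drift against diffusion coordinatewise. Here $|(G(v))_k|\le(2L_f+\tfrac1\mu)\|v-v^*\|_2$ is small on $B^\infty_r$, and consensus terms are controlled using $\|v_\alpha-v^*\|_\infty\le B$ uniformly. That uniform bound comes from Step 1's second-moment control. We expect to need the same case split as \cite{converge_globally} (drift pointing inward vs.\ diffusion dominating), now with two drift/diffusion pairs. Since $\sigma_2>0$ and $G$ is merely Lipschitz, the diffusion term can vanish, so we rely on the $\sigma_1$-part for positivity.

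\textbf{Step 4: conclusion.} Finally, a continuity/contradiction argument finishes the proof. Let $T'$ be the first time $\CV(\rho_t)\le\delta/2$, i.e.\ $\CW_2^2\le\delta$. On $[0,T']$, Step 2 gives the differential inequality, hence $\CW_2^2(\rho_t,\delta_{v^*})\le\CW_2^2(\rho_0,\delta_{v^*})e^{-\chi t/2}$. If $T'>T^*$, then at time $T^*$ this bound equals $\delta$ by the definition of $T^*$, a contradiction.
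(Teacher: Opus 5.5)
Your architecture (energy inequality for $\CV(\rho_t)$, quantitative Laplace principle, $\alpha$-independent mass bound via a product bump, continuation argument) matches the approach the paper invokes through \cite[Corollary~2.6]{memory}. However, Step~1 as written does not deliver the stated rate.

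Set $L_G:=2L_f+\tfrac1\mu$. The gradient part of $\tfrac{d}{dt}\CV$ is $-\lambda_2\int\langle v-v^*,G(v)\rangle\,d\rho_t+\tfrac{\sigma_2^2}{2}\int\|G(v)\|_2^2\,d\rho_t$. Your two-sided bound $|\langle v-v^*,G(v)\rangle|\le L_G\|v-v^*\|_2^2$ makes the first piece $2\lambda_2L_G\CV$, not $\lambda_2L_G\CV$. What you actually obtain is
\[
\tfrac{d}{dt}\CV\le-(\chi-\lambda_2L_G)\,\CV+C\big(\sqrt{\CV}\,\|v_\alpha-v^*\|_2+\|v_\alpha-v^*\|_2^2\big),
\]
and $\chi-\lambda_2L_G$ can be nonpositive even though $\chi>0$.

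Your suggested rescues do not work. Passing to $\CW_2^2$ rescales both sides equally. The factor $\tfrac12$ in the target rate is the room consumed in Step~2 to absorb the $v_\alpha$ error terms, so it covers the deficit only if $\lambda_2L_G<\chi/2$, which is an extra hypothesis.

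The missing idea is a one-sided estimate that uses the proximal structure. Write $G(v)=\tfrac1\mu\big(v-P(v)\big)$ with $P:=\mathrm{prox}_{\mu g}\circ(I-\mu\nabla f)$. Then $P$ is $(1+\mu L_f)$-Lipschitz and $P(v^*)=v^*$, hence
\[
\langle v-v^*,G(v)\rangle=\tfrac1\mu\|v-v^*\|_2^2-\tfrac1\mu\langle v-v^*,P(v)-P(v^*)\rangle\ge-L_f\|v-v^*\|_2^2 .
\]
The drift then contributes at most $2\lambda_2L_f\CV\le\lambda_2L_G\CV$. Keeping $\|G(v)\|_2\le L_G\|v-v^*\|_2$ for the It\^o term gives $-\chi\CV$ plus the error terms, as required.

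There are two smaller issues in Steps~3--4.

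First, your idea of two drift/diffusion pairs is right, but the last sentence of Step~3 relies on the wrong diffusion. The $\sigma_1$ coefficient $(v-v_\alpha)_k^2$ vanishes on the hyperplane $v_k=(v_\alpha)_k$, which can sit arbitrarily close to a face of the bump. There $\partial_k\phi_r/\phi_r$ blows up, while $G_k$ may be of order $r$ and point outward, so the $\sigma_1$ term cannot control the gradient drift. Instead, pair each drift with its own diffusion: where $\partial_{kk}\phi_r>0$,
\[
-\lambda_2G_k\,\partial_k\phi_r+\tfrac{\sigma_2^2}{2}G_k^2\,\partial_{kk}\phi_r\ge-\tfrac{\lambda_2^2}{2\sigma_2^2}\,\tfrac{(\partial_k\phi_r)^2}{\partial_{kk}\phi_r},
\]
and the same holds for the $(\lambda_1,\sigma_1)$ pair. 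For $\phi_r(v)=\prod_j\exp\big(1-r^2/(r^2-x_j^2)\big)$ with $x_j=v_j-v^*_j$, one has $(\partial_k\phi_r)^2/\partial_{kk}\phi_r=\tfrac{2r^2x_k^2}{3x_k^4-r^4}\phi_r$. This is at most $C\phi_r$ on the shell $|x_k|\ge c\,r$ with $c>3^{-1/4}$. This is exactly where $\sigma_2>0$ is used; the vanishing of $G_k$ is harmless because the drift vanishes with it.

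Second, the $\alpha$-independent bound $\|v_\alpha(\rho_t)-v^*\|_\infty\le B$ cannot come from Step~1 alone, because the argument is circular:
\begin{itemize}
\item Step~1 needs $v_\alpha$ already controlled;
\item Step~2 needs the mass bound;
\item the mass bound needs $B$;
\item the available a priori bounds on $v_\alpha(\rho_t)$ depend on $\alpha$.
\end{itemize}
Your stopping time must therefore be the first time that either $\CV(\rho_t)\le\delta/2$ or $\|v_\alpha(\rho_t)-v^*\|_2\ge c\sqrt{\CV(\rho_t)}$. On that interval you can take $B=c\sqrt{\CV(\rho_0)}$. Choosing $\alpha$ large (also at $t=0$), together with continuity of $t\mapsto v_\alpha(\rho_t)$, then rules out the second exit.

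Finally, you are right that $v^*\in\operatorname{supp}\rho_0$ must be added to the statement. Otherwise take $\rho_0=\delta_{v_0}$ with $G(v_0)=0$ and $v_0\neq v^*$: this is stationary, and the conclusion fails.
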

    \begin{proof}
   The proof follows the approach of~\cite[Corollary~2.6]{memory}. For conciseness, we omit the details here.
    \end{proof}

    \subsection{Proof of the {Main Result}}
    \label{proof of main}
    Theorem~\ref{main} follows from Theorems~\ref{thm:quantitative-mean-field-limit}~and~\ref{thm:mean-field long-time-behavior} using the argument below.
    \begin{proof}[Proof of Theorem~\ref{main}]
     First by Theorem~\ref{thm:mean-field long-time-behavior}, there exists $\alpha >0$ such that $
    \min_{t\in[0,T^*]} \CW_2^2(\rho_t, \delta_{v^*}) \leq \delta/4,$
    where $T^*$ is defined in~\eqref{def of T star}. 
    Further, from Theorem~\ref{thm:quantitative-mean-field-limit}, 
    
    \begin{align*}
    &\sup_{t\in[0,T^*]}\CW_2^2(\rho_t,\widehat{\rho}_t^N)\\
    &\leq \sup_{t\in[0,T^*]}\E\left[\frac{1}{N}\sum_{i=1}^N \|V_t^{i,N} - \overline{V}^{i,N}_s\|_2^2\right]\\
    &\leq \E\left[\frac{1}{N}\sum_{i=1}^N \sup_{t\in[0,T^*]} \|V^{i,N}_t - \overline{V}^{i,N}_t\|_2^2\right]\\
    &\leq \frac{C\left(T^*+(T^*)^{2}\right)\cdot e^{ C\cdot\left(T^*+(T^*)^{2}\right)\cdot \left(1+\Psi_{\rho_0,T^*,v^*}\right)} \cdot \Lambda_{\rho_0,T^*,v^*}}{N}.
    \end{align*}
    In the above, the expectation is w.r.t. the independent coupling of $\rho_t$ and $\widehat{\rho}_t^N$.
    Thus, one has
    \begin{align*}
    &\min_{t\in[0,T^*]}\CW^2_2(\widehat{\rho}_t^N , \delta_{v^*})\\
    &\leq 2 \min_{t\in[0,T^*]}\left(\CW^2_2(\widehat{\rho}_t^N , \rho_t)) + \CW^2_2(\rho_t , \delta_{v^*})\right)\\
    &\leq 2 \sup_{t\in[0,T^*]}\CW_2^2(\rho_t,\widehat{\rho}_t^N) + 2 \min_{t\in[0,T^*]} \CW_2^2(\rho_t, \delta_{v^*})\\
    &\leq  \frac{C\left(T^*+(T^*)^{2}\right)\cdot e^{ C\cdot\left(T^*+(T^*)^{2}\right)\cdot \left(1+\Psi_{\rho_0,T^*,v^*}\right)} \cdot \Lambda_{\rho_0,T^*,v^*}}{N} + {\delta}/{2}.
    \end{align*}
    This completes the proof.
    \end{proof}

\section{Numerical Experiments}\label{sec:numerical}

In this section, we compare the empirical performance of our algorithm with those of proximal gradient (PG)~\cite{parikh2014proximal}, accelerated proximal gradient (APG)~\cite{FISTA} and existing CBO-type algorithms~\cite{high_dim, BAE2022126726} for two signal reconstruction examples. All algorithms use the same initial particles and the final result for each algorithm is selected as the particle with the lowest objective function value. Note that PG and APG can be seen as particle systems without interactions. Hyper-parameters of the algorithms are tuned empirically and all algorithms use the same stepsize. Our first metric for quantifying performance is the success rate of achieving global minimum, as this is important for evaluating an optimization algorithm. For each trial, we estimate the global minimum by running PG initialized at the ground truth signal and a trial is said to be successful if the excess error in objective function value above the estimated global minimum is smaller than a threshold. Specifically, let $v^*$ be the estimated global minimizer and let $\widehat{v}$ be the reconstructed signal, then a trial is successful if
\begin{equation}\label{eq:success_rate}
    \frac{\mathcal{E}(\widehat{v}) - \mathcal{E}(v^*)}{\mathcal{E}(v^*)} < 10^{-3}.
\end{equation}
Our second metric is related to mean squared error with respect to the ground truth signal, as this is important for signal processing applications. The specific definitions will be provided below in each example. Note that the ground truth is not necessarily the global minimizer due to measurement model mismatch, measurement noise, and estimation bias induced by the regularizer. 

\subsection{Example 1: One-Bit Signal Quantization}
Our first example is the non-monotonic quantization problem first proposed and analyzed in~\cite{boufounos2011universal}.
{Variants of this problem appear in efficient lightweight compression applications~\cite{Valsesia2016mar, Goukhshtein2020dec}.}
Consider the measurement model
\begin{equation}\label{sparse_recovery_meas}
    y=\text{sign}\left(\sin\left(\omega\left(Ax_0+u\right)\right)\right),
\end{equation}
where $x_0\in\mathbb{R}^d$ is the signal to be reconstructed, the measurement matrix $A\in\mathbb{R}^{m\times d}$ has i.i.d. Gaussian entries with mean zero and variance $1/d$, $\omega=\pi/\Delta$ with $\Delta$ the quantization bin-size, $\sin(\cdot)$ and $\text{sign}(\cdot)$ are applied element-wise to the arguments, and the vector $u\in\mathbb{R}^m$ is a known i.i.d. uniform dither taking values in $[-\Delta/2, \Delta/2]$. Consistent reconstruction is a combinatorial non-differentiable problem that may be infeasible in the presence of measurement errors or noise. We relax it, replacing $\text{sign}(\cdot)$ consistency with an $\ell_2$ data cost:
\begin{equation}\label{eq:one_bit_data_cost}
\mathcal{D}(x;A,y,u)=\frac{1}{2}\left\|y - \sin\left(\omega\left(Ax+u\right)\right)\right\|_2^2.
\end{equation}
Even with this relaxation, the problem has a very difficult optimization landscape, providing a good test case for optimization algorithms. To date, there are no good solutions known, unless a good estimate of the signal already exists (e.g., solving a hierarchy of problems~\cite{B_SampTA11}).

\begin{figure}
    \centering
    \begin{subfigure}[b]{0.49\linewidth}
    \includegraphics[width=1.0\linewidth]{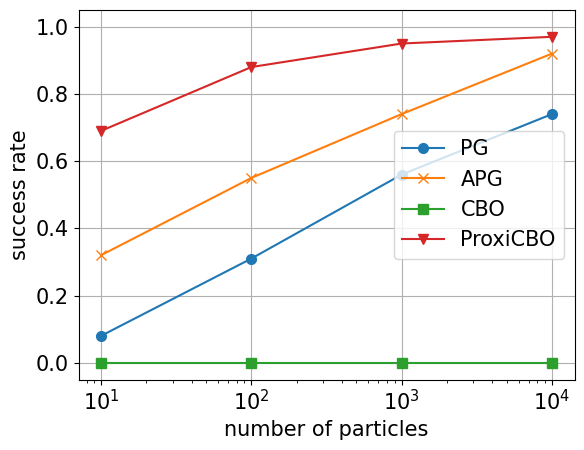}
    \caption{One-bit signal quantization.}
    \label{fig:sparse_success_rate}
    \end{subfigure}
    \begin{subfigure}[b]{0.49\linewidth}
    \includegraphics[width=1.0\linewidth]{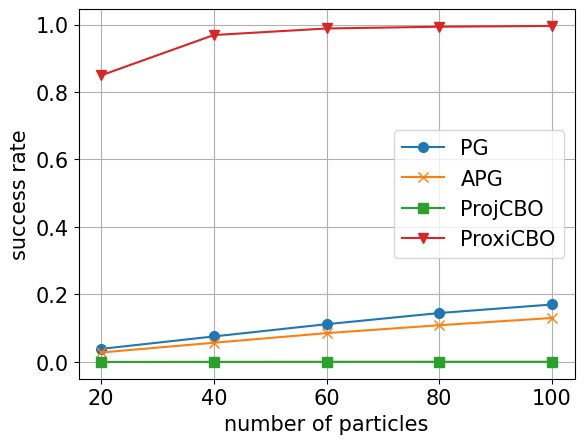}
    \caption{Single-photon lidar.}
    \label{fig:lidar_success_rate}
    \end{subfigure}
    \caption{Success rate by objective function value defined in \eqref{eq:success_rate}.}
    \label{success_rate}
\end{figure}

\paragraph{Sparse recovery}
When $x_0$ is known to be sparse, we use $\ell_1$ norm as the regularizer and estimate $x_0$ by solving
\begin{equation*}
    \min_{x\in\mathbb{R}^d}\, \left\{\mathcal{E}(x):= \mathcal{D}(x;A,y,u) + \lambda \|x\|_1\right\}.
\end{equation*}
In our simulations, $x_0$ has dimension $d=200$ with sparsity $s=10$, and the number of measurements is $m=4d$. All initial particles have i.i.d. standard normal entries. The results are computed from 500 trials, and in each trial, $A,x_0,u$ are independently sampled according to their distributions. 

Fig.~\ref{fig:sparse_success_rate} compares the success rate \eqref{eq:success_rate} for PG, APG, CBO~\cite{high_dim} and ProxiCBO with $\lambda=0.25 \|y\|_2^2$ and $\omega=14$. We can see that ProxiCBO with 1000 particles outperforms other methods with 10,000 particles, showing ProxiCBO's superior particle-efficiency.
Fig.~\ref{fig:sparse_snr} compares the reconstruction signal to noise ratio (SNR), which is defined as $10 \log_{10}(\|x_0\|_2^2/\|\widehat{x}-x_0\|_2^2)$. 
In Fig.~\ref{fig:vary_lam}, we fix $\omega=14$ and change $\lambda$. A larger $\lambda$ can improve the optimization landscape, but may lead to a larger measurement mismatch, driving the optimizer away from the ground truth signal. Fig.~\ref{fig:vary_lam} shows that ProxiCBO performs the best for a wide range of $\lambda$ values. 
In Fig.~\ref{fig:vary_w}, we fix $\lambda=0.3\|y\|_2^2$ and change $\omega$. As $\omega$ increases (thus $\Delta$ decreases), the theoretical reconstruction error decreases~\cite{boufounos2011universal}, but the optimization landscape becomes more challenging. Fig.~\ref{fig:vary_w} shows that all methods performs well for small $\omega$ and ProxiCBO outperforms other methods as $\omega$ increases.

\begin{figure}
    \centering
    \begin{subfigure}[b]{0.49\linewidth}
    \includegraphics[width=1.0\linewidth]{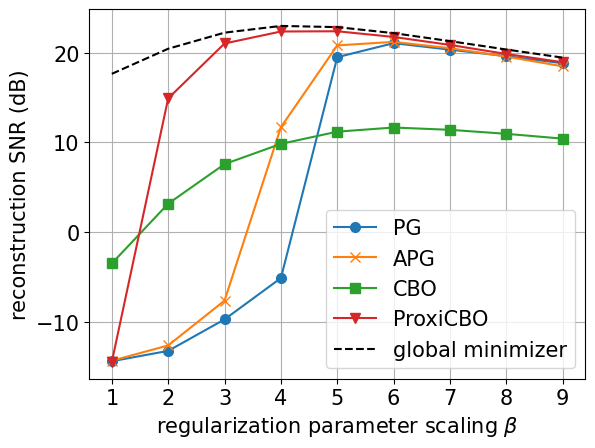}
    \caption{$\lambda=\beta\cdot 0.05\|y\|_2^2$ }
    \label{fig:vary_lam}
    \end{subfigure}
    \begin{subfigure}[b]{0.49\linewidth}
    \includegraphics[width=1.0\linewidth]{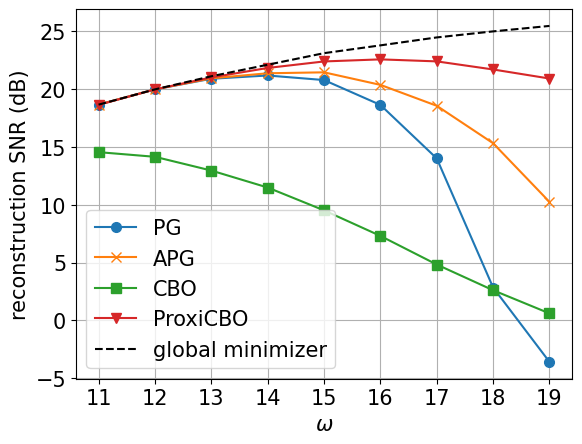}
    \caption{$\Delta=\pi/\omega$ }
    \label{fig:vary_w}
    \end{subfigure}
    \caption{Reconstruction SNR for sparse signal recovery with one-bit quantized measurements. The number of particles is 1000.}
    \label{fig:sparse_snr}
\end{figure}

\paragraph{Image recovery}
When $x_0$ is a vectorized image, we use constrained total variation (TV)~\cite{beck2009fast} as the regularizer and reconstruct the image by solving 
\begin{equation*}
    \min_{x\in\mathbb{R}^{d_x\times d_y}}\, \left\{\mathcal{E}(x):= \mathcal{D}(\text{vec}(x);A,y,u) + \lambda \text{TV}(x) + \iota_\mathcal{B}(x)\right\},
\end{equation*}
where $\text{vec}:\mathbb{R}^{d_x\times d_y}\to\mathbb{R}^d$ is the vectorization operator, $\text{TV}(\cdot)$ is the total variation semi-norm~\cite{chambolle2010introduction}, $\mathcal{B}$ is a box constraint for pixel values, and $\iota_\mathcal{B}$ is the indicator function for $\mathcal{B}$.

In our simulation, the ground truth image $I_0$ is a $64\times64$ Shepp-Logan phantom with pixel values in $[0,1]$, thus $x_0=\text{vec}(I_0)\in\mathbb{R}^{4096}$ and $\mathcal{B}=[0,1]^{64\times 64}$. We use $m=4d$ measurements and $\omega=12$. Fig.~\ref{fig:image_snr} compares the reconstruction SNR achieved by PG, APG, projected CBO (projCBO)~\cite{BAE2022126726} and proxiCBO with different regularization parameters. The results are averaged over 100 trials. In each trial, $A,u$ are independently sampled and $x_0$ is fixed among all trials.  For PG, APG, and proxiCBO, the proximal operator for constrained TV, $g(x)=\lambda \text{TV}(x) + \iota_\mathcal{B}(x)$, is computed using the method proposed in \cite{beck2009fast}. For projCBO, the objective function is $\mathcal{D}(\text{vec}(x);A,y,u) + \lambda \text{TV}(x)$ and the projector is defined for $\mathcal{B}$. We can see that $\beta=4$ results in the highest SNR achieved by the estimated global minimizer and that proxiCBO approaches the optimal value at $\beta=4$.

\subsection{Example 2: Single-Photon Lidar} 
In a typical single-photon lidar (SPL) setup, a target is illuminated by a pulsed laser, the reflected light is detected by a single-photon detector, and the photon detection times are recorded by a timing system. From those detection times, we can estimate the reflectivity $S$ and distance $z$ of the target. If the target is moving, we can also estimate its velocity $v$.
Suppose the pulse shape of the laser is defined by $h(t)$, which is normalized such that $\int_{-\infty}^{\infty} h(t)\,dt=1$. Let $\{t_k\}_{k=1}^K$ be the timestamps when the laser pulses are sent out, which are randomly generated in our simulations.

\paragraph{Static SPL} Assuming that the target is static, the photon detection process is a time-inhomogeneous Poisson process with intensity function
\begin{equation*}
    \lambda(t) = S  \sum_{k=1}^K h\left(t-\tau-t_k\right) + b,
\end{equation*}
where $b$ is the background intensity and $\tau=2z/c$ with $c$ being the speed of light is the time-of-flight (TOF). The log-likelihood function for estimating $(S,b,\tau)$ is defined as
\begin{equation*}
    \mathcal{L} = -SK - bt_a + \sum_{t\in\mathcal{T}} \log\left( S \sum_{k=1}^K h\left(t-\tau-t_k\right) + b\right),
\end{equation*}
where $t_a$ is the acquisition time and $\mathcal{T}$ is a set of detection times. Given $\mathcal{T}$, we can estimate $S$, $b$ and $\tau$ (thus $z$) by solving~\cite{shin2015photon, kitichotkul2023role}
\begin{equation}{\label{SPL_ML}}
    \min_{S,b, \tau}\, \left\{\mathcal{E}(S,b,\tau):= -\mathcal{L}(\mathcal{T};S,b,\tau) + \iota_{\mathcal{C}}(S,b,\tau)\right\},
\end{equation}
where $\mathcal{C}$ is the feasible set for $(S,b,\tau)$. In our simulations, $K=500$, $S_\text{true}=0.1$, $b_\text{true}=10^{-4}$, $\tau_\text{true}=234$ ns, $t_a=5\times10^5$ ns, thus the signal to background ratio (SBR) is $(K\cdot S)/(b\cdot t_a)=1$. The pulse shape $h(t)$ is the probability density function of the Gaussian distribution with mean zero and standard deviation $0.1$ ns. The feasible set $\mathcal{C}=[10^{-8},10]\times[10^{-8},10]\times[0,\infty)$. The initial particles are i.i.d. uniform in $[0,1]\times[0,1]\times[0,500]$.

\begin{figure}[t]
    \centering
    \includegraphics[width=0.55\textwidth]{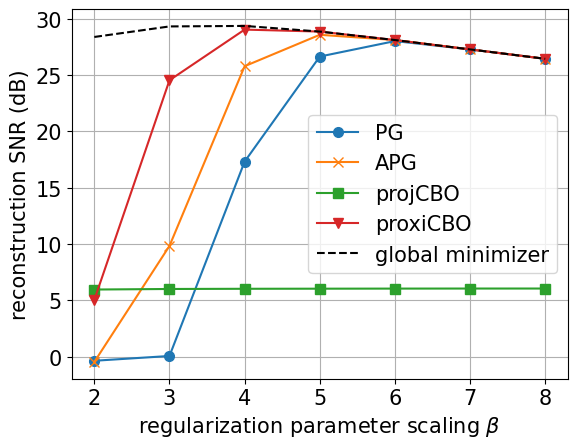}
    \caption{Reconstruction SNR for image recovery with one-bit quantized measurements. The number of particles is 500 and the regularization parameter is $\lambda=\beta\cdot 10^{-4}\|y\|_2^2$.}
    \label{fig:image_snr}
\end{figure}

Fig.~\ref{fig:lidar_success_rate} compares the success rate \eqref{eq:success_rate} for PG, APG, projected CBO~\cite{BAE2022126726}, and ProxiCBO. Fig.~\ref{fig:lidar_rmse} compares the root mean squared error (RMSE) for each of the parameters $S$, $b$, and $\tau$, where the RMSE for $S$ is defined as $\sqrt{\frac{1}{M}\sum_{i=1}^M (\widehat{S}_i-S_\text{true})^2}$ with $M$ being the number of trials and the definition of RMSE for other parameters is similar. We also include the Cram{\'e}r-Rao lower bound (CRB) in the plots showing the best achievable RMSE for any unbiased estimators~\cite{kay1993fundamentals}.
The results are computed from $M=10,000$ independent trials. The results show that ProxiCBO has better particle-efficiency than all comparison methods. Moreover, with sufficient particles, ProxiCBO can accurately solve the maximum likelihood problem~\eqref{SPL_ML} and achieve the CRB.

\paragraph{Doppler SPL} We now consider the case where the target is moving with constant velocity $v$. The photon detection process is still an inhomogeneous Poisson process, but the intensity function has changed due to the Doppler-shift effect. Suppose that the target has initial TOF $\tau$ at time $t=0$, then the intensity function for the Poisson process is~\cite{Kitichotkul2025mar, Kitichotkul2025apr}
\begin{equation*}
\lambda(t) = S\sum_{k=1}^K h\left(t-\frac{c \tau}{c-v} - \frac{c+v}{c-v} t_k\right) + b
\end{equation*}
and the log-likelihood function $\mathcal{L}$ is
\begin{equation*}
- SK - b t_a + \sum_{t\in\mathcal{T}} \log\left(S\sum_{k=1}^K h\left(t-\frac{c \tau}{c-v} - \frac{c+v}{c-v}t_k\right) + b\right).
\end{equation*}
Similar to the static case, given detection times $\mathcal{T}$, our goal is to estimate $(S,b,\tau,v)$ by minimizing the negative log-likelihood function under appropriate box constraints for the parameters. Note that in~\cite{Kitichotkul2025mar, Kitichotkul2025apr}, periodic pulse times $\{t_k=k t_r\}_{k=1}^K$ are used, which facilitates an efficient velocity estimation through Fourier probing. In our simulation, we use random pulse times, thus Fourier probing is not applicable. The simulation parameters are the same as the static case, except that we increase the acquisition time $t_a$ and the number of laser pulses $K$ by a factor of 2 for better velocity estimation, while keeping $\text{SBR}=1$. The ground truth velocity is $v_\text{true}=15$ m/s.  For $(S,b,\tau)$, the feasible set and the initial particle distribution is the same as the static case. For velocity $v$, we let the feasible set be $[-50,50]$ m/s and the initial particle distribution be uniform in $[-50,50]$ m/s. Fig.~\ref{fig:doppler_lidar_rmse} shows that proxiCBO outperforms all comparison methods. 

\begin{figure}
    \centering
    \begin{subfigure}[b]{0.49\linewidth}
    \includegraphics[width=1.0\linewidth]{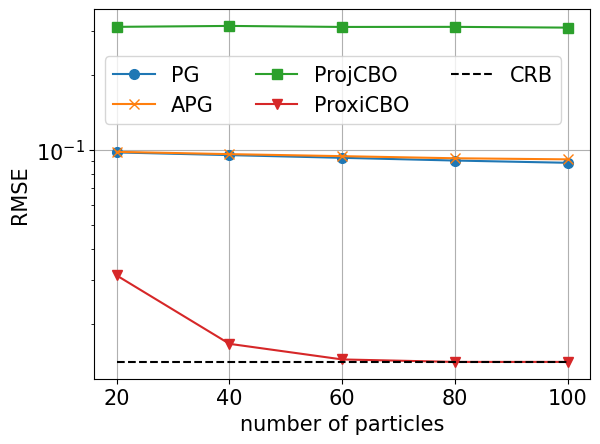}
    \caption{RMSE for $S$ estimation.}
    \label{fig:lidar_rmse_S}
    \end{subfigure}
    \begin{subfigure}[b]{0.49\linewidth}
    \includegraphics[width=1.0\linewidth]{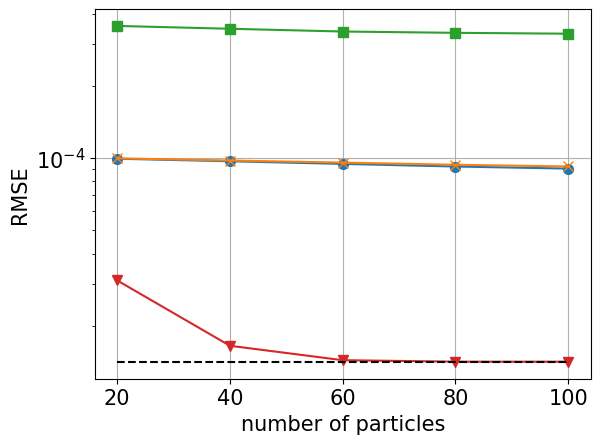}
    \caption{RMSE for $b$ estimation.}
    \label{fig:lidar_rmse_b}
    \end{subfigure}
    \begin{subfigure}[b]{0.49\linewidth}
    \includegraphics[width=1.0\linewidth]{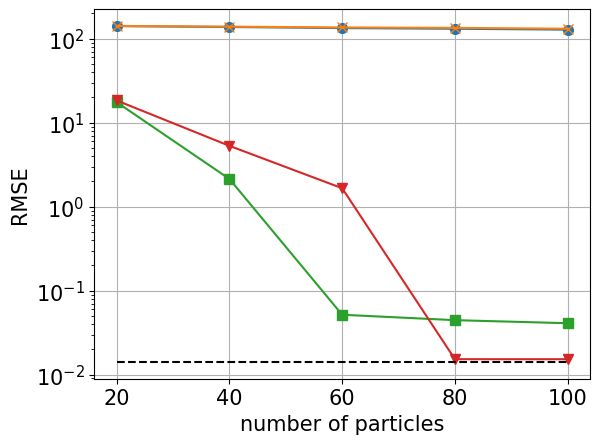}
    \caption{RMSE for $\tau$ estimation.}
    \label{fig:lidar_rmse_tau}
    \end{subfigure}
    \begin{subfigure}[b]{0.49\linewidth}
    \includegraphics[width=1.0\linewidth]{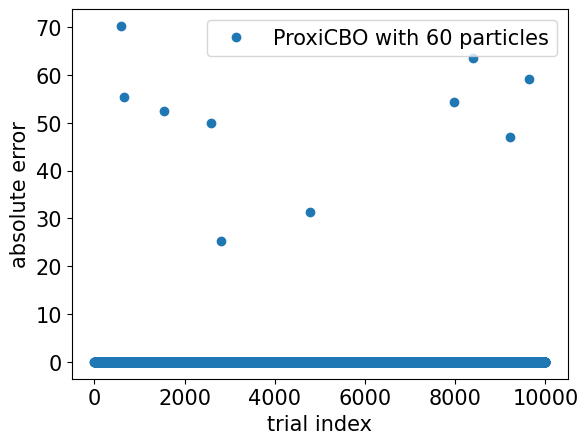}
    \caption{Error for $\tau$ in each trial.}
    \label{fig:lidar_tau_err}
    \end{subfigure}
    \caption{Estimation of $S$ (Fig.~\ref{fig:lidar_rmse_S}), $b$ (Fig.~\ref{fig:lidar_rmse_b}) and $\tau$ (Fig.~\ref{fig:lidar_rmse_tau}) for single-photon lidar. The relatively high root mean squared error (RMSE) of ProxiCBO with 60 particles for $\tau$ estimation in Fig.~\ref{fig:lidar_rmse_tau} is due to a few (10 out of 10,000) outliers as we can see in Fig.~\ref{fig:lidar_tau_err}, which explains the high success rate reported in Fig.~\ref{fig:lidar_success_rate}.}
    \label{fig:lidar_rmse}
\end{figure}
\begin{figure}
    \centering
    \begin{subfigure}[b]{0.49\linewidth}
    \includegraphics[width=1.0\linewidth]{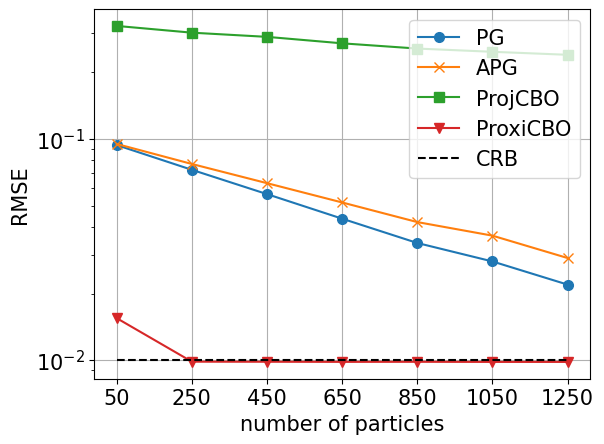}
    \caption{RMSE for $S$ estimation.}
    \label{fig:doppler_lidar_rmse_S}
    \end{subfigure}
    \begin{subfigure}[b]{0.49\linewidth}
    \includegraphics[width=1.0\linewidth]{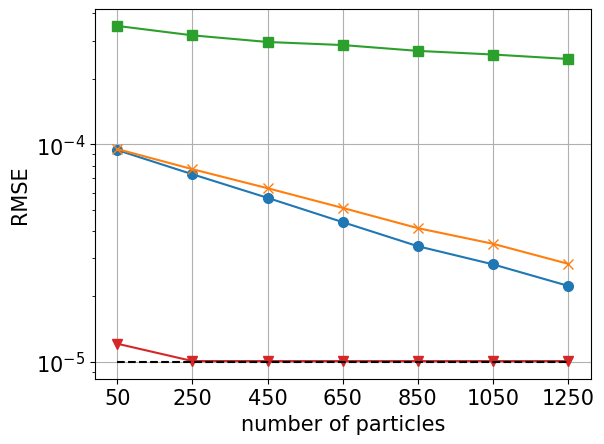}
    \caption{RMSE for $b$ estimation.}
    \label{fig:doppler_lidar_rmse_b}
    \end{subfigure}
    \begin{subfigure}[b]{0.49\linewidth}
    \includegraphics[width=1.0\linewidth]{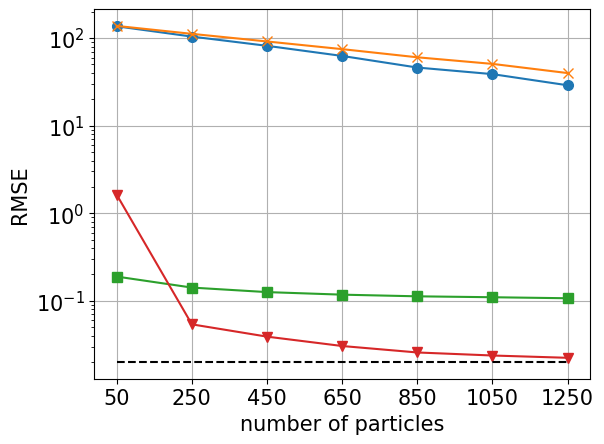}
    \caption{RMSE for $\tau_0$ estimation.}
    \label{fig:doppler_lidar_rmse_tau}
    \end{subfigure}
    \begin{subfigure}[b]{0.49\linewidth}
    \includegraphics[width=1.0\linewidth]{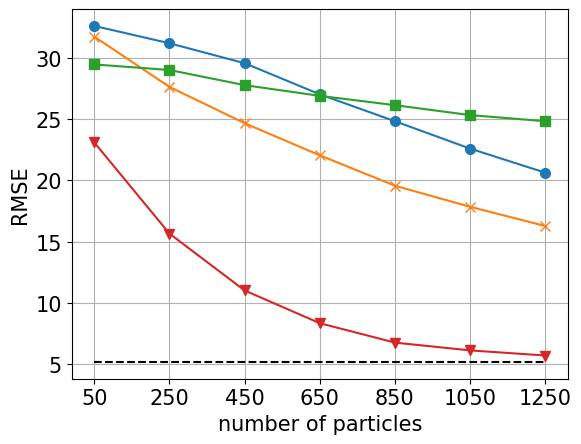}
    \caption{RMSE for $v$ estimation.}
    \label{fig:doppler_lidar_rmse_v}
    \end{subfigure}
    \caption{Estimation of $S$ (Fig.~\ref{fig:doppler_lidar_rmse_S}), $b$ (Fig.~\ref{fig:doppler_lidar_rmse_b}), $\tau_0$ (Fig.~\ref{fig:doppler_lidar_rmse_tau}), and $v$ (Fig.~\ref{fig:doppler_lidar_rmse_v}) for Doppler single-photon lidar. The results are computed from 10,000 independent trials. }
    \label{fig:doppler_lidar_rmse}
\end{figure}
\clearpage
\section{Appendix}
In the appendix, we present the detailed proofs of Theorem~\ref{thm:wellposed for micro}, Theorem~\ref{thm:mean-fild existence theorem}, and Theorem~\ref{thm:quantitative-mean-field-limit} in a structured manner. 
The organization is as follows. Sections~\ref{app:consensus bound}, \ref{app:stab}, and \ref{app:moment} collect the technical lemmas that will be used throughout the analysis. 
Sections~\ref{app:proof of macro}, \ref{app:proof of quantitative}, and \ref{app:proof of micro} then provide the proofs of Theorems~\ref{thm:mean-fild existence theorem}, \ref{thm:quantitative-mean-field-limit}, and \ref{thm:wellposed for micro}, respectively. 
For lemmas whose proofs involve lengthy and technical computations, we defer the details to the supplementary material and present only the core arguments needed for establishing the main theorems in the paper.

\subsection{Technical Lemmas for Bounding Consensus Point Norms}\label{app:consensus bound}

In the theoretical analysis, the consensus point $v_\alpha(\mu)$~\eqref{eq:def_consensus} associated with a probability distribution $\mu$ arises frequently, and obtaining bounds on its norm is essential. The following proposition provides a quantitative refinement of ~\cite[Proposition A.4]{gerber2023mean}. Whereas the original proof uses contrapositive argument, we give a direct proof, making the dependence of the constants explicit. In particular, for $p=1$ the result yields a bound on $\|v_\alpha(\mu)\|_2$ in terms of the $L^q(\mu)$ norm of $\|v\|_2$.

\begin{proposition}[{\cite[Proposition A.4]{gerber2023mean}}, quantitative version]\label{A4}
Suppose $\CE\in\CA(s,l)$, and $0<p\leq q$, then there is a constant $C_{\text{Con}}$ such that for all $\mu\in \CP_q(\R^d)$
\begin{gather*}
\frac{\int \|v\|_2^p e^{-\alpha \CE(v)}\,d\mu}{\int e^{-\alpha \CE(v)}\,d\mu} \leq \left(C_{\text{Con}}\cdot \int \|v\|_2^q \,d\mu\right)^{p/q},
\end{gather*}
where $C_{\text{Con}}$ is a constant that only depends on $\alpha,p,q, l,c_u,c_l,C_u,C_l,\underline{\CE}$.
\end{proposition}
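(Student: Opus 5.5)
The plan is to prove the bound directly and uniformly in $m_q:=\int\|v\|_2^q\,d\mu$, with no case split on the size of $m_q$. We split the numerator at a radius proportional to the natural length scale $m_q^{1/q}$ and use the two-sided growth condition (iv) of $\CA(s,l)$. First normalize the weight: write $\tilde\omega(v):=e^{-\alpha(\CE(v)-\underline{\CE})}\in(0,1]$. The ratio in the statement is unchanged when $\omega_\alpha$ is replaced by $\tilde\omega$. We may assume $m_q>0$; otherwise $\mu=\delta_0$ and the ratio equals $0$.

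\emph{Lower bound on the denominator.} Set $R_0:=(2m_q)^{1/q}$. Markov's inequality gives $\mu(\|v\|_2\le R_0)\ge 1/2$. On this ball the upper growth bound $\CE-\underline{\CE}\le c_u\|v\|_2^l+C_u$ yields $\tilde\omega\ge e^{-\alpha C_u-\alpha c_uR_0^l}$. Hence
\[
\int\tilde\omega\,d\mu\ \ge\ \tfrac12 e^{-\alpha C_u}e^{-\alpha c_u R_0^l}.
\]

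\emph{Upper bound on the numerator.} Take $R:=KR_0$, with $K\ge1$ a constant chosen below depending only on $l,c_u,c_l$. On $\{\|v\|_2\le R\}$ we bound $\|v\|_2^p\le R^p$, so this part contributes at most $R^p\int\tilde\omega\,d\mu$ to the numerator. On $\{\|v\|_2>R\}$ the lower growth bound gives $\tilde\omega\le e^{\alpha C_l}e^{-\alpha c_l R^l}$. Since $p\le q$, Jensen (or H\"older) gives $\int\|v\|_2^p\,d\mu\le m_q^{p/q}$. So this part contributes at most $e^{\alpha C_l}e^{-\alpha c_lK^lR_0^l}m_q^{p/q}$. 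Dividing by the denominator bound,
\[
\frac{\int\|v\|_2^p\tilde\omega\,d\mu}{\int\tilde\omega\,d\mu}\ \le\ K^pR_0^p+2e^{\alpha(C_l+C_u)}\,e^{-\alpha(c_lK^l-c_u)R_0^l}\,m_q^{p/q}.
\]

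\emph{Choice of $K$.} This is the step I regard as the crux. The exponential factor $e^{\alpha c_uR_0^l}$ from the denominator must be absorbed, or the bound becomes exponential in $m_q$ rather than multiplicative. Choosing $K$ with $c_lK^l\ge c_u$, i.e.\ $K=\max\{1,(c_u/c_l)^{1/l}\}$, makes the exponent nonpositive, so that factor is at most $1$. Using $R_0^p=2^{p/q}m_q^{p/q}$, we obtain
\[
\frac{\int\|v\|_2^p\,\omega_\alpha\,d\mu}{\int\omega_\alpha\,d\mu}\ \le\ \bigl(2^{p/q}K^p+2e^{\alpha(C_l+C_u)}\bigr)\,m_q^{p/q}.
\]
This is of the required form with $C_{\text{Con}}:=\bigl(2^{p/q}K^p+2e^{\alpha(C_l+C_u)}\bigr)^{q/p}$, which depends only on $\alpha,p,q,l,c_u,c_l,C_u,C_l$ (and trivially on $\underline{\CE}$ through the normalization).

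\emph{The case $l=0$.} Condition (iv) then gives $\CE-\underline{\CE}\le c_u+C_u$ everywhere. Hence $\tilde\omega\ge e^{-\alpha(c_u+C_u)}$ and $\tilde\omega\le1$, so the ratio is directly at most $e^{\alpha(c_u+C_u)}m_q^{p/q}$, again of the required form.

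The only delicate point is balancing the two growth exponents through $K$; the rest is Markov plus Jensen. Note that no smallness or largeness of $m_q$ is used, which is exactly what yields the purely multiplicative constant.
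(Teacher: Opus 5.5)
Your proof is correct, and it takes a genuinely different route from the paper's.

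\textbf{The paper's route.} The paper does not prove the estimate from scratch. It imports, from the first part of the proof of Gerber et al.'s Proposition A.4, an \emph{affine} bound of the form $(C_1+C_2\cdot\text{moment})^{p/q}$. It then splits into two regimes:
\begin{itemize}
\item when the moment exceeds $1$, the additive constant $C_1$ is absorbed into the moment;
\item when the moment is at most $1$, Markov's inequality on the fixed ball $B_2(0)$ bounds the denominator below by a $\mu$-independent constant, the numerator is bounded by $e^{-\alpha\underline{\CE}}\int\|v\|_2^p\,d\mu$, and H\"older finishes.
\end{itemize}

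\textbf{Your route.} You adapt the Markov radius $R_0=(2m_q)^{1/q}$ to the scale of $\mu$, and you cut the numerator at $KR_0$ with $c_lK^l\ge c_u$. With this choice, the decay $e^{-\alpha c_l K^l R_0^l}$ of the weight on the tail dominates the smallness $e^{-\alpha c_u R_0^l}$ of the denominator. The bound therefore comes out homogeneous in $m_q^{p/q}$ in one step, and no case split is needed.

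\textbf{What each buys.} Your argument is self-contained and gives a fully explicit constant,
\[
C_{\text{Con}}=\bigl(2^{p/q}K^p+2e^{\alpha(C_l+C_u)}\bigr)^{q/p},\qquad K=\max\{1,(c_u/c_l)^{1/l}\}.
\]
That is exactly the stated purpose of this ``quantitative version''. In the paper, by contrast, $C_1$ and $C_2$ stay implicit inside the cited proof. Your argument also makes transparent that $C_{\text{Con}}$ does not depend on $\underline{\CE}$ at all, so your parenthetical about a ``trivial'' dependence can simply be dropped. The paper's route is shorter on the page only because the large-moment regime is outsourced to the reference.
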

\begin{proof}
When $l=0$, the result is straightforward as $e^{-\alpha \CE(v)}$ is both upper and lower bounded. So we are only concerned with the case when $l>0$. By the first part of the proof of \cite[Proposition A.4]{gerber2023mean}, there are two constants $C_1$ and $C_2$ depending only on $\alpha,p,q,l,c_u,c_l,C_u$ and $C_l$ such that
\begin{gather}
\frac{\int \|v\|_2^p e^{-\alpha \CE(v)}\,d\mu}{\int e^{-\alpha \CE(v)}\,d\mu}  \leq  \left(C_1 + C_2 \int \|v\|_2^p \,d\mu\right)^{p/q}.
\end{gather}
Now we consider two cases: $\int \|v\|_2^p \,d\mu> 1$ and $\int \|v\|_2^p \,d\mu\leq 1$. When $\int \|v\|_2^p \,d\mu> 1$, we have
\begin{align*}
&\frac{\int \|v\|_2^p e^{-\alpha \CE(v)}\,d\mu}{\int e^{-\alpha \CE(v)}\,d\mu} \leq  \left(C_1 + C_2 \int \|v\|_2^p \,d\mu\right)^{p/q} \\
&\leq \left((C_1 + C_2) \int \|v\|_2^p \,d\mu\right)^{p/q}.
\end{align*}
For the second case, we consider the set $B_2 (0)$. By Markov inequality, we have $\mu(B^c_2(0)) \leq \int \|v\|_2^p\,d\mu / 2^p \leq 1/2^p$. On the set $B_2 (0)$, we have $\CE(v) \leq c_u 2^l + C_u + \underline{\CE}$. Then we have
\begin{align*}
&\int e^{-\alpha \CE(v)}\,d\mu\geq \int_{B_2 (0)} e^{-\alpha \CE(v)}\,d\mu \\
&\geq \int_{B_2 (0)} e^{-\alpha(c_u 2^l + C_u+\underline{\CE})}\,d\mu \geq\left(1-\frac{1}{2^p}\right) e^{-\alpha(c_u 2^l + C_u + \underline{\CE})}.
\end{align*}
Thus,
\begin{align*}
&\frac{\int \|v\|_2^p e^{-\alpha \CE(v)}\,d\mu}{\int e^{-\alpha \CE(v)}\,d\mu} \leq \frac{e^{-\alpha \underline{\CE}}}{\left(1-\frac{1}{2^p}\right) e^{-\alpha(c_u 2^l + C_u + \underline{\CE})}} \int \|v\|_2^p \,d\mu\\
&\leq \frac{e^{-\alpha \underline{\CE}}}{\left(1-\frac{1}{2^p}\right) e^{-\alpha(c_u 2^l + C_u + \underline{\CE})}} \left(\int \|v\|_2^q \,d\mu\right)^{p/q},
\end{align*}
{where the last inequality follows from H\"older inequality.}
Taking $C_{\text{Con}}$ to be the maximum of $C_1+C_2$ and $\left(\tfrac{e^{-\alpha \underline{\CE}}}{\left(1-\frac{1}{2^p}\right) e^{-\alpha(c_u 2^l + C_u + \underline{\CE})}}\right)^{q/p} $ to finish the proof.
\end{proof}
\subsection{Technical Lemmas for Wasserstein Stability}\label{app:stab}

In proving Theorem~\ref{thm:quantitative-mean-field-limit}, we follow~\cite{gerber2023mean} and adopt Sznitman's argument~\cite[Theorem 3.1]{chaintron2021propagation}, which requires Wasserstein stability of the consensus point, i.e., bounding $\|v_\alpha(\mu)-v_\alpha(\nu)\|_2$ in terms of the Wasserstein distance between $\mu$ and $\nu$. The next proposition provides this estimate and constitutes a quantitative refinement of~\cite[Corollary 3.3]{gerber2023mean}. Whereas the original proof relies on a contrapositive argument, we give a direct proof that makes the constants explicit.

\begin{proposition}[{\cite[Corollary 3.3]{gerber2023mean}}, quantitative version]\label{prop:stab}
Suppose $\CE \in \CA(s,l)$ and $p\geq p_\CM$. Then it holds that for any $(\mu,\nu)\in \CP_{p,R}(\R^d) \times \CP_p(\R^d)$,
\begin{align*}
&\|v_{\alpha}(\mu) - v_\alpha(\nu)\|_2\\
&\leq C \left(1 + e^{C(1+(2R)^{l} )}\left(1+R^{2p-1}\right)\right) \cdot  \CW_p(\mu,\nu).
\end{align*}
\end{proposition}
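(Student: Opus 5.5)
We take an optimal coupling $\pi\in\Pi(\mu,\nu)$ for $\CW_p(\mu,\nu)$ and write the difference of consensus points as a difference of ratios, splitting it by adding and subtracting a mixed term. Writing $Z_\mu=\int\omega_\alpha\,d\mu$ and $Z_\nu=\int\omega_\alpha\,d\nu$, we have
\begin{align*}
v_\alpha(\mu)-v_\alpha(\nu)
&=\frac{1}{Z_\mu}\iint\big(u\,\omega_\alpha(u)-v\,\omega_\alpha(v)\big)\,d\pi(u,v)
+\Big(\frac{1}{Z_\mu}-\frac{1}{Z_\nu}\Big)\int v\,\omega_\alpha(v)\,d\nu(v)\\
&=:I_1+I_2 .
\end{align*}
The factor $1/Z_\mu$ only involves $\mu$, whose $p$-th moment is at most $R^p$. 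This is why the bound should depend only on $R$ and not on moments of $\nu$.

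The first step is a lower bound on $Z_\mu$. By Markov's inequality, $\mu(B_{2R}(0))\geq 1-2^{-p}$. On $B_{2R}(0)$, Assumption~\ref{assump}(iv) gives $\CE\leq \underline{\CE}+c_u(2R)^l+C_u$. Hence
\[
Z_\mu\geq (1-2^{-p})\,e^{-\alpha(\underline{\CE}+C_u+c_u(2R)^l)}\geq e^{-C(1+(2R)^l)},
\]
which produces the factor $e^{C(1+(2R)^l)}$. When $l=0$ this is just a constant.

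To bound $I_1$, we use the local Lipschitz bound from Assumption~\ref{assump}(ii) together with $|e^{-a}-e^{-b}|\leq e^{-\alpha\underline{\CE}}\alpha|a-b|$ for $a,b\geq\alpha\underline{\CE}$. This gives
\[
\|u\,\omega_\alpha(u)-v\,\omega_\alpha(v)\|_2\leq C\big(1+\|u\|_2+\|v\|_2\big)^{s+1}\|u-v\|_2 .
\]
Hölder's inequality with exponents $p$ and $p'=p/(p-1)$ then yields
\[
\|I_1\|_2\leq \frac{C}{Z_\mu}\Big(1+M_{(s+1)p'}(\mu)+M_{(s+1)p'}(\nu)\Big)\CW_p(\mu,\nu),
\]
where $M_r$ denotes the $L^r$ moment of $\|v\|_2$. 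The condition $p\geq p_\CM=s+2$ in the case $l=0$ is exactly what makes $(s+1)p'\leq p$. For $I_2$, we write $\frac{1}{Z_\mu}-\frac{1}{Z_\nu}=\frac{Z_\nu-Z_\mu}{Z_\mu Z_\nu}$ and bound $|Z_\nu-Z_\mu|$ in the same way by $C\iint(1+\|u\|_2+\|v\|_2)^s\|u-v\|_2\,d\pi$. The remaining factor $\frac{1}{Z_\nu}\int\|v\|_2\,\omega_\alpha\,d\nu$ is $\|v_\alpha\|$-type and is bounded by Proposition~\ref{A4}.

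The main obstacle is the dependence on moments of $\nu$, which is not assumed to lie in $\CP_{p,R}$. When $l>0$ (so $p_\CM=1$), polynomial weights must not be paid against moments of $\nu$. We first try to exploit the exponential weight: $(1+\|v\|_2)^{s+1}e^{-\alpha\CE(v)}$ is uniformly bounded when $l>0$. So in the coupling integrand we place the weight on whichever endpoint has the smaller energy. Concretely, with $t\mapsto\omega_\alpha(u+t(v-u))$ along the segment, we bound by $(1+\|u\|_2+\|v\|_2)^{s}\max(\omega_\alpha(u),\omega_\alpha(v))$. The growth in $\|v\|_2$ is absorbed by $\omega_\alpha(v)$ via Assumption~\ref{assump}(iv), while growth in $\|u\|_2$ is controlled by $\mu$-moments at most of order $p$, using $(1+\|u\|_2)^{\cdot}\leq C(1+R^{\cdot})$ in $L^{p'}$. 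Collecting the powers of $R$ from the moment factors and from $Z_\mu^{-1}$ should give the stated $(1+R^{2p-1})$. This power comes from multiplying an $R^{p}$-order moment of $\mu$ by an $R^{p-1}$-order factor from $I_2$, and the bookkeeping there will need care. If the direct absorption fails, the fallback is to symmetrize by swapping the roles of $\mu$ and $\nu$ in the decomposition.
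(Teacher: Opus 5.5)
There is a genuine gap, and it sits exactly at what you call the main obstacle. Nothing in your plan produces a constant depending only on $R$ when $\nu$ has a large $p$-th moment, and the remedies you suggest cannot fix this.

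Your second term is $I_2=\frac{Z_\nu-Z_\mu}{Z_\mu}\,v_\alpha(\nu)$, and $v_\alpha(\nu)$ is not controlled by $R$ at all. For $\mu=\delta_0\in\CP_{p,R}(\R^d)$ and $\nu=\delta_x$ one has $v_\alpha(\nu)=x$ with $\|x\|_2$ arbitrary. Proposition~\ref{A4} only gives $\|v_\alpha(\nu)\|_2\le C\,\CW_p(\nu,\delta_0)$.

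\begin{itemize}
\item Placing the weight on the lower-energy endpoint only helps the coupling integrands ($I_1$ and $|Z_\nu-Z_\mu|$).
\item Absorbing $\|v\|_2\,\omega_\alpha(v)\le C$ inside $\int v\,\omega_\alpha\,d\nu$ is possible only for $l>0$, and it merely leaves the factor $1/Z_\nu$, which is exponentially large when $\nu$ sits far out.
\item Swapping $\mu$ and $\nu$ is worse, since $1/Z_\nu$ then multiplies everything.
\item For $l=0$ there is no decay at all, and your $I_1$ bound still contains $M_{(s+1)p'}(\nu)$, which is unbounded over $\CP_p(\R^d)$.
\end{itemize}

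The missing idea is that if $\nu$ is far out, then $\CW_p(\mu,\nu)$ is itself large, so a crude bound suffices. The paper uses a case split.

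\emph{Case 1:} $\int\|v\|_2^p\,d\nu\le(2R)^p$. Both measures then lie in $\CP_{p,2R}(\R^d)$. The bounded-moment estimate (Lemma~\ref{hglemma} applied to $v\mapsto v\,\omega_\alpha(v)$ and $\omega_\alpha$) gives the factor $Ce^{C(1+(2R)^l)}(1+R^{2p-1})$. That lemma is essentially your $I_1+I_2$ computation once both moments are controlled.

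\emph{Case 2:} $\CW_p(\nu,\delta_0)>2R$. The linear growth $\|v_\alpha(\cdot)\|_2\le C\,\CW_p(\cdot,\delta_0)$ from Proposition~\ref{A4}, together with $\CW_p(\mu,\nu)\ge\CW_p(\nu,\delta_0)-R$, gives
\[
\frac{\|v_\alpha(\mu)-v_\alpha(\nu)\|_2}{\CW_p(\mu,\nu)}\le C\,\frac{\CW_p(\nu,\delta_0)+R}{\CW_p(\nu,\delta_0)-R}\le 3C .
\]
Adding the two constants yields the statement.

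Inside your own decomposition for $l>0$, the same observation would read $|Z_\nu-Z_\mu|\,\|v_\alpha(\nu)\|_2\le C\min\{\CW_p(\mu,\nu),1\}\,(R+\CW_p(\mu,\nu))\le C(1+R)\,\CW_p(\mu,\nu)$. Either way, some use of $\CW_p(\nu,\delta_0)\le R+\CW_p(\mu,\nu)$ is unavoidable.
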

\begin{remark}\label{expoenntial_example}
Here, we provide an example that achieves the exponentially scaling coefficient with respect to $R$ in Proposition~\ref{prop:stab}. Let $\CE(v) = v^2$. We know $\CE\in \CA(1,2)$, and $p_\CM = 1$. Pick $p=1\geq p_\CM$ and $\alpha=1$. Let $\mu_n = \delta_n$ and $\nu_n = e^{-n}\delta_0 + (1-e^{-n})\delta_n$. One can verify that the first moments are $ \int v\,d\mu_n = n$ and $\int v\,d\nu_n = n(1-e^{-n})$. One can also compute
\begin{gather*}
v_\alpha (\mu_n) = \frac{\int v e^{-\CE(v)}\,d\mu_n}{\int e^{-\CE (v)}\,d\mu_n}=\frac{n e^{-n^2}}{e^{-n^2}} = n,
\end{gather*}
and
\begin{align*}
v_\alpha(\nu_n) &=  \frac{\int v e^{-\CE(v)}\,d\nu_n}{\int e^{-\CE (v)}\,d\nu_n}=\frac{ne^{-n^2}(1-e^{-n})}{e^{-n} + e^{-n^2} - e^{-n^2 - n}}\\
&= n\cdot \frac{1-e^{-n}}{e^{n^2 - n}+1-e^{-n}}.
\end{align*}
Further, $\CW_{1}(\mu_n,\nu_n) = ne^{-n}$. Thus
\begin{gather*}
\frac{|v_\alpha (\mu_n) - v_\alpha (\nu_n)|}{\CW_1 (\mu_n,\nu_n)} = (1 - \frac{1-e^{-n}}{e^{n^2 - n} + 1 - e^{-n}})\cdot e^n.
\end{gather*}
The coefficient scales exponentially with $n$, which is exactly the first moment of $\mu_n$, or $R$.
\end{remark}
The proof of the above proposition relies on the lemma below, which is a quantitative refinement of~\cite[Lemma A.1]{gerber2023mean} where we explicitly track the constants to make their dependence transparent. The details are deferred to Supplementary~\ref{B}.
\begin{lemma}[{\cite[Lemma A.1]{gerber2023mean}}, quantitative version]\label{hglemma}
For a real finite-dimensional vector space $\CV$ with norm $\|\cdot \|$, let $g:\R^d\rightarrow \CV$ and $h:\R^d\rightarrow (0,\infty)$ be functions such that the following condition is satisfied for some $\xi\geq 0$ and $L>0$:
\begin{equation}\label{hg}
\begin{aligned}
&\forall (u,v)\in\R^d,\\
&\|g(u)-g(v)\| \vee |h(u)-h(v)| \leq L(1+\|u\|+\|v\|)^\xi \|u-v\|.
\end{aligned}
\end{equation}
Let $\eta=1/2 \min_{\|x\|\leq 2^{1/p}R} h(x)$. Then for all $p\geq \xi+1$ and all $R>0$, there exists a constant $C_{p,L}$ only depending on $p$ and $L$ such that for all $(\mu,\nu)\in\CP_{p,R}(\R^d) \times \CP_{p,R}(\R^d)$,
\begin{align*}
\left\| \frac{\int g \,d\mu}{\int h \,d\mu} - \frac{\int g \,d\nu}{\int h \,d\nu}\right\|\leq C_{p,L}\left(\tfrac{1}{\eta} +  \tfrac{\left(\|g(0)\|R + R^p + 1\right)}{\eta^2}\right)\cdot (1+R^{p-1}) \CW_p(\mu,\nu).
\end{align*}
\end{lemma}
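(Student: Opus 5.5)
The plan is to prove Lemma~\ref{hglemma} by the standard ``ratio of integrals'' decomposition, with every constant tracked explicitly. Write $G_\mu=\int g\,d\mu$, $H_\mu=\int h\,d\mu$, and similarly for $\nu$. Then
\begin{align*}
\frac{G_\mu}{H_\mu}-\frac{G_\nu}{H_\nu}=\frac{G_\mu-G_\nu}{H_\mu}+G_\nu\,\frac{H_\nu-H_\mu}{H_\mu H_\nu}.
\end{align*}
The proof then needs three ingredients, assembled in the order below:
\begin{enumerate}
\item a lower bound on $H_\mu$ and $H_\nu$ in terms of $\eta$;
\item a bound on $\|G_\mu-G_\nu\|$ and $|H_\mu-H_\nu|$ by $\CW_p(\mu,\nu)$;
\item an upper bound on $\|G_\nu\|$.
\end{enumerate}

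For the lower bound, apply Markov's inequality to $\mu\in\CP_{p,R}$: $\mu(\{\|x\|>2^{1/p}R\})\le R^p/(2R^p)=1/2$. Hence $H_\mu\ge \tfrac12\min_{\|x\|\le 2^{1/p}R}h(x)=\eta$, using $h>0$, and the same holds for $\nu$. This gives the factors $1/\eta$ and $1/\eta^2$.

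For the difference bound, take an optimal coupling $\pi$ of $(\mu,\nu)$ for $\CW_p$. Then
\begin{align*}
\|G_\mu-G_\nu\|\le L\int (1+\|u\|+\|v\|)^{\xi}\|u-v\|\,d\pi.
\end{align*}
Apply H\"older's inequality with exponents $p$ and $p/(p-1)$. This gives $\big(\int(1+\|u\|+\|v\|)^{\xi p/(p-1)}d\pi\big)^{(p-1)/p}\CW_p(\mu,\nu)$. Since $p\ge\xi+1$, we have $\xi p/(p-1)\le p$. Jensen's inequality plus the moment bound $R^p$ on both marginals then bound the prefactor by $C_{p,L}(1+R)^{\xi}\le C_{p,L}(1+R^{p-1})$, using $\xi\le p-1$. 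The same estimate holds for $|H_\mu-H_\nu|$.

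For the upper bound on $G_\nu$, start from $\|g(v)\|\le\|g(0)\|+L(1+\|v\|)^{\xi}\|v\|\le \|g(0)\|+L(1+\|v\|)^{p}$. Integrating against $\nu$ gives $\|G_\nu\|\lesssim \|g(0)\|+1+R^p$. Combining the three pieces then yields the bound
\begin{align*}
C_{p,L}\Big(\tfrac1\eta+\tfrac{\|g(0)\|+R^p+1}{\eta^2}\Big)(1+R^{p-1})\,\CW_p(\mu,\nu).
\end{align*}

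The main obstacle is bookkeeping rather than a conceptual step. The stated form has $\|g(0)\|R$ in place of $\|g(0)\|$. To accommodate this, I would either absorb the mismatch by bounding $\|g(0)\|\le \|g(0)\|R+\|g(0)\|$ for $R\ge1$ and splitting off the case $R<1$, or more likely use a slightly different split that produces the $R$ factor naturally. A sharper alternative is to write $G_\nu=g(0)H_\nu/h$-type terms, or to use $\|g(v)-g(0)\|$ directly. I would also check that the H\"older and Jensen step really keeps the constant depending only on $p$ and $L$, and that the moment bound on the coupling's marginals suffices, giving $\int(1+\|u\|+\|v\|)^p\,d\pi\le 3^{p-1}(1+2R^p)$.
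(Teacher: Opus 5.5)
Your route is the paper's route. The paper imports from the proof of Gerber et al.'s Lemma~A.1 exactly your decomposition, the Markov lower bound $\int h\,d\mu,\int h\,d\nu\ge\eta$, and the H\"older/coupling estimate. It then bounds the H\"older prefactor by $C_p(1+R^{p-1})$ and $\int\|g\|\,d\nu$ as you do.

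The one place you diverge is the term you flag, and there you should stop trying to reconcile. Your bound $\|\int g\,d\nu\|\le\|g(0)\|+2^{p-1}L(1+R^p)$ is the correct one. The paper's factor $\|g(0)\|R$ comes from the pointwise estimate $\|g(u)\|\le(\|g(0)\|+L(1+\|u\|)^{\xi})\|u\|$, which is false: at $u=0$ it says $\|g(0)\|\le 0$.

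In fact no alternative split can produce the factor $R$, because the inequality as stated fails for small $R$ whenever $g(0)\neq 0$. Take $d=1$, $g\equiv M>0$, $h(x)=1+L\max\{-\tfrac{1}{2L},\min\{x,\tfrac{1}{2L}\}\}$, $\mu=\delta_0$, and $\nu=\delta_R$ with $R\le\tfrac{1}{2L}$.
\begin{itemize}
\item The hypothesis holds with $\xi=0$, $\mu,\nu\in\CP_{p,R}(\R)$, $\eta\ge\tfrac14$, and $\CW_p(\mu,\nu)=R$.
\item The left-hand side equals $MLR/(1+LR)$.
\item The right-hand side is at most $C_{p,L}\bigl(4+16(MR+R^p+1)\bigr)(1+R^{p-1})R$.
\end{itemize}
Dividing by $MR$ and letting $M\to\infty$ would give $L/(1+LR)\le 16\,C_{p,L}R(1+R^{p-1})$, which is false for $R$ small.

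So your case split works only for $R\ge 1$, where $\|g(0)\|\le\|g(0)\|R$. The correct statement replaces $\|g(0)\|R$ by $\|g(0)\|$ (or by $\|g(0)\|\max\{1,R\}$), and that is precisely what your argument proves. This costs nothing downstream: the lemma is only used in Proposition~\ref{prop:stab} with $g(v)=ve^{-\alpha\CE(v)}$, where $g(0)=0$.
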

Now we are ready to present the proof of Proposition~\ref{prop:stab}
\begin{proof}[Proof of Proposition~\ref{prop:stab}]
First we can verify with $g(v) = v e^{-\alpha \CE(v)}$ and $h(v) = e^{-\alpha \CE(v)}$, the assumptions in Lemma~\ref{hglemma} are satisfied with $L=L_{\alpha,L_\CE,l,c_l,C_l,\underline{\CE}}$ being a constant depending on $\alpha,L_\CE,l,c_l,C_l,\underline{\CE}$, and with $\xi=s+1$ if $l=0$ and $\xi=0$ if $l>0$. Also, one can verify
$\eta=1/2\min_{\|v\|_2\leq 2^{1/p}R }e^{-\alpha \CE(v)}\geq 1/2 e^{-C_{\alpha,l,c_u,C_u}(1+R^{l})}$ and $\|g(0)\|_2=0$. Then for $(\mu,\nu)\in \CP_{p,R}(\R^d) \times \CP_{p,R}(\R^d)$,
\begin{equation}
\begin{aligned}\label{bounded_lipschitz}
&\|v_\alpha (\mu) - v_\alpha (\nu)\|_2\\
&\leq C \cdot e^{C (1+R^l)}\left(1+R^p\right)\left(1+R^{p-1}\right) \CW_p(\mu,\nu)\\
&\leq C\cdot e^{C (1+R^l)} \left(1+R^{2p-1}\right)\CW_p(\mu,\nu),
\end{aligned}
\end{equation}
where the constants do not depend on $R$. Denote $C\cdot e^{C (1+R^l)} \left(1+R^{2p-1}\right)$ by $T(R)$. Also, we know from Proposition~\ref{A4} that for $\mu \in \CP_p(\R^d)$,
\begin{gather}\label{linear-growth}
\|v_\alpha (\mu)\|_2\leq C \CW_p(\mu,\delta_0).
\end{gather}
Given \eqref{bounded_lipschitz} and \eqref{linear-growth}, now we are ready to present the proof. Consider 2 cases: $\nu \in \CP_p(\R^d) \cap \CP_{p,2R}(\R^d)$ or $\nu \in \CP_p(R^d) \cap \CP_{p,2R}^c(\R^d)$. For $\nu \in \CP_p(\R^d) \cap \CP_{p,2R}(\R^d)$, we know from \eqref{bounded_lipschitz}
\begin{gather}\label{case1}
\|v_\alpha (\mu) - v_\alpha (\nu)\|_2\leq T(2R) \CW_p(\mu,\nu).
\end{gather}
For  $\nu \in \CP_p(R^d) \cap \CP_{p,2R}^c(\R^d)$, we know
\begin{equation}\label{case2}
\begin{aligned}
\frac{\|v_\alpha (\mu) - v_\alpha (\nu)\|_2}{\CW_p (\mu,\nu)} &\leq C \cdot \frac{\CW_p(\mu,\delta_0) + \CW_p (\nu,\delta_0)}{\CW_p (\mu,\nu)}\\
&\leq C\cdot \frac{\CW_p(\mu,\delta_0) + \CW_p (\nu,\delta_0)}{\CW_p (\nu,\delta_0) - \CW_p(\mu,\delta_0)}\\
&\leq C\cdot \frac{\CW_p (\nu,\delta_0) + R}{\CW_p (\nu,\delta_0) - R}\\
&\leq 3  C.
\end{aligned}
\end{equation}
Combining \eqref{case1} and \eqref{case2} and summing over the coefficients $T(2R)$ and $3  C$ to finish the proof.
\end{proof}
\subsection{Technical Lemma on Moment Bounds for \eqref{dyn}}\label{app:moment}
In the theoretical analysis, it is frequently necessary to control the moments of quantities arising from the dynamics~\eqref{dyn}. 
The following proposition summarizes these moment bounds. We defer the computation to Supplementary~\ref{C}.
\begin{proposition}\label{prop:micro_moment_bound}
Consider the particle system \eqref{dyn} with initial distribution $\rho_0^{\otimes N}$. Let $\widehat{\rho}^N_t$ be the empirical measure of $V^{1,N}_t,\dots,V^{N,N}_t$. Then for $i=1,\dots,N$,
\begin{equation}\label{three_moment_bounds}
\begin{aligned}
&\E \left[\sup_{t\in[0,T]} \|V_t^{i,N}\|_2^p\right] \vee \E \left[\sup_{t\in[0,T]}\int \|v\|_2^p\,d\widehat{\rho}^N_t\right]\vee \E\left[\sup_{t\in[0,T]} \|v_\alpha (\widehat{\rho}^N_t)\|_2^p\right]\\
&\leq C \left(\E [\|V^{i,N}_0\|_2^p]+k_p(T)\cdot \|v^*\|_2^p\right)\cdot e^{C \cdot T\cdot k_p(T)},
\end{aligned}
\end{equation}
where $k_p(t)$ is defined in Section~\ref{sec:constants}.
\end{proposition}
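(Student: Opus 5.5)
The plan is a standard Itô--BDG--Gr\"onwall moment estimate, applied particle by particle, in which the consensus point is controlled through Proposition~\ref{A4}. To make the drift and diffusion coefficients linear in the state, I recenter everything at $v^*$. Write $G(v):=\nabla f(v)+\nabla M_{\mu g}(v-\mu\nabla f(v))$. Since $G(v^*)=0$, $\nabla f$ is $L_f$-Lipschitz, and $\nabla M_{\mu g}$ is $\tfrac1\mu$-Lipschitz, we get
\[
\|G(v)\|_2\le (2L_f+\tfrac1\mu)\|v-v^*\|_2\le C(\|v\|_2+\|v^*\|_2).
\]
Here the constant involves $L_f(1+\mu\cdot\tfrac1\mu)+\tfrac1\mu$. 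For the consensus point, Proposition~\ref{A4} with $p=1$ and $q=p$ gives
\[
\|v_\alpha(\widehat\rho^N_t)\|_2^p\le C\int\|v\|_2^p\,d\widehat\rho^N_t=\frac{C}{N}\sum_j\|V^{j,N}_t\|_2^p .
\]
This step uses only Assumption~\ref{assump}~(iv), via $\mathcal{A}(s,l)$. It already shows that the third quantity in \eqref{three_moment_bounds} is dominated by the second. Also, since $\sup_t\int\|v\|_2^p\,d\widehat\rho^N_t\le \frac1N\sum_j\sup_t\|V^{j,N}_t\|_2^p$, exchangeability means the second quantity is bounded by the first. So it suffices to bound $\E\sup_{t\le T}\|V^{i,N}_t\|_2^p$.

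Next I write $V^i_t=V^i_0+\int_0^t(\text{drift})\,ds+\int_0^t(\text{diffusion})\,dB$ and raise it to the $p$-th power. The drift integral is handled with H\"older's inequality, which produces a factor $t^{p-1}$. The martingale term is handled with the Burkholder--Davis--Gundy inequality, which after H\"older produces a factor $t^{p/2-1}$. The diagonal diffusion matrices satisfy $\|D(x)\|_F=\|x\|_2$. This gives
\[
\E\sup_{s\le t}\|V^i_s\|_2^p\le C\,\E\|V^i_0\|_2^p+C(t^{p-1}+t^{p/2-1})\int_0^t\Big(\E\|V^i_r\|_2^p+\E\|v_\alpha(\widehat\rho^N_r)\|_2^p+\|v^*\|_2^p\Big)dr .
\]
Inserting the consensus bound and using exchangeability, $\E\|v_\alpha(\widehat\rho^N_r)\|_2^p\le C\,\E\|V^{i,N}_r\|_2^p\le C\,\E\sup_{s\le r}\|V^i_s\|_2^p$. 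The $\|v^*\|_2^p$ term integrates to $t\,(t^{p-1}+t^{p/2-1})\|v^*\|_2^p\le k_p(T)\|v^*\|_2^p$. Gr\"onwall's inequality on $[0,T]$, with the prefactor bounded by $k_p(T)$, then yields the claimed bound with rate $e^{C\,T\,k_p(T)}$.

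The main obstacle is rigor rather than the estimate itself. Gr\"onwall needs the quantity to be finite a priori, and the coefficients are only locally Lipschitz in the consensus term. I would first localize with stopping times $\tau_M=\inf\{t:\max_j\|V^j_t\|_2\ge M\}$, which is legitimate by Theorem~\ref{thm:wellposed for micro}. I would run the argument for the stopped processes, obtain bounds uniform in $M$, and conclude by Fatou's lemma as $M\to\infty$. A secondary point needs care: applying Proposition~\ref{A4} requires $\widehat\rho^N_t\in\mathcal P_p$, which is trivial for an empirical measure, and the constant $C_{\text{Con}}$ must not depend on $N$, which it does not. The case $p<2$ in BDG needs the usual adjustment, but only $p\ge2$ is used later.
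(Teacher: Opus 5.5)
Your proposal is correct and takes essentially the same route as the paper's proof. Both recenter the proximal-gradient drift at $v^*$ using its vanishing there and Lipschitzness, bound $\|v_\alpha(\widehat{\rho}^N_t)\|_2^p$ by $\int\|v\|_2^p\,d\widehat{\rho}^N_t$ via Proposition~\ref{A4}, and apply BDG and H\"older. Both then use exchangeability to replace the empirical moment by $\E\|V^{i,N}_s\|_2^p$, close with Gr\"onwall, and reduce the second and third quantities to the first. Your localization with $\tau_M$ and Fatou's lemma is an addition: it supplies the a priori finiteness that the paper's direct use of Gr\"onwall tacitly assumes.
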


\subsection{Proof of Theorem~\ref{thm:mean-fild existence theorem}}\label{app:proof of macro}
The proof is based on the Leray-Schauder Theorem below.
\begin{theorem}\label{schroader}[{\cite[Theorem 11.3]{GT}}]
	Let $ \CT $ be a compact mapping of a Banach space $ \mathcal{B} $ into itself, and suppose there exists a constant $ M $ such that $\left\|x\right\|_{\mathcal{B}}<M $ for all $ x\in \CB $ and $ \sigma\in[0,1] $ satisfying $ x=\sigma \CT x $. Then $ \CT $ has a fixed point.
\end{theorem}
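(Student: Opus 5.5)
The plan is to deduce this from the Schauder fixed point theorem (in the form of \cite[Corollary 11.2]{GT}): a continuous map of a closed convex set $K\subset\CB$ into itself whose image is relatively compact has a fixed point. Here ``compact mapping'' means that $\CT$ is continuous and maps bounded sets into relatively compact sets. The idea is to truncate $\CT$ radially onto the closed ball of radius $M$. The a priori bound on solutions of $x=\sigma\CT x$ then rules out the case in which the truncation is active.

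\textbf{Step 1: the truncated map.} Let $\overline{B}_M=\{x\in\CB:\|x\|_{\CB}\le M\}$ and define $\CT^*:\overline{B}_M\to\overline{B}_M$ by
\begin{equation*}
\CT^* x=\begin{cases}\CT x, & \|\CT x\|_{\CB}\le M,\\[2pt] M\,\CT x/\|\CT x\|_{\CB}, & \|\CT x\|_{\CB}> M.\end{cases}
\end{equation*}
Write $\CT^*=r\circ\CT$, where $r(y)=y$ for $\|y\|_{\CB}\le M$ and $r(y)=My/\|y\|_{\CB}$ otherwise. The radial retraction $r$ is continuous, and the two formulas agree on $\|y\|_{\CB}=M$. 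Hence $\CT^*$ is continuous and maps $\overline{B}_M$ into itself.

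\textbf{Step 2: compactness of the image.} Since $\overline{B}_M$ is bounded, $\CT(\overline{B}_M)$ is relatively compact. Because $r$ is continuous, $r$ maps the compact set $\overline{\CT(\overline{B}_M)}$ onto a compact set. Therefore $\CT^*(\overline{B}_M)$ is relatively compact.

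\textbf{Step 3: applying Schauder.} The set $\overline{B}_M$ is closed and convex, so Schauder's theorem gives $x\in\overline{B}_M$ with $x=\CT^*x$. There are two cases.
\begin{itemize}
\item If $\|\CT x\|_{\CB}\le M$, then $x=\CT x$, and $x$ is the desired fixed point.
\item If $\|\CT x\|_{\CB}>M$, then $x=\sigma\CT x$ with $\sigma=M/\|\CT x\|_{\CB}\in(0,1)$, and $\|x\|_{\CB}=M$. This contradicts the hypothesis that every solution of $x=\sigma\CT x$ with $\sigma\in[0,1]$ satisfies $\|x\|_{\CB}<M$.
\end{itemize}
So the second case cannot occur, and $\CT$ has a fixed point.

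The only substantive ingredient is Schauder's theorem itself, which I take as known; in GT it follows from Brouwer's theorem by finite-dimensional approximation of compact sets. Given that, the step needing the most care is Step 2, namely that radial truncation preserves continuity and relative compactness. This is settled by the continuity of $r$ together with the fact that continuous images of compact sets are compact.
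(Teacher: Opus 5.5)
Your proof is correct and is the standard argument from the cited reference. Gilbarg--Trudinger prove Theorem 11.3 the same way: normalize $M=1$, apply Corollary 11.2 (Schauder) to the radial truncation of $\CT$ onto the closed ball, and use the a priori bound on solutions of $x=\sigma\CT x$ to exclude the truncated case. The paper gives no proof of its own and simply cites this result.
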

The key idea of the proof is to choose a suitable space $\mathcal{B}$ and an appropriate mapping $\mathcal{T}$. 
Before presenting the proof, we first provide an auxiliary lemma that can be used to establish the well-definedness of the chosen operator $\mathcal{T}$. We defer the computation to Supplementary~\ref{D}.
\begin{lemma}\label{lemma:well-definedness of T}
Suppose $\CE\in \CA(s,l)$ with parameters $s,l\geq 0$, such that $l\leq s+1$ and fix a final time $T$. Assume also $\rho_0 \in \CP_{p}(\R^d)$ for $p\geq \max\{2,p_{\CM} (s,l) \}$, and let $\overline{V}_0 \sim \rho_0$. Given any $u \in \mathcal{C}([0,T],\R^d)$, the SDE
\begin{equation}\label{SDE_general}
\begin{aligned}
    d \overline{V}_t =& -\lambda_1 \left(\overline{V}_t - u_t\right)\,dt \\
    &-\lambda_2 \left( \nabla f(\overline{V}_t) + \nabla M_{\mu g} \left(\overline{V}_t - \mu \nabla f(\overline{V}_t)\right)\right)\,dt\\
    & + \sigma_1 D\left(\overline{V}_t - u_t\right) \, dB_t\\
    & + \sigma_2 D\left( \nabla f(\overline{V}_t) + \nabla M_{\mu g} \left({V}_t - \mu \nabla f(\overline{V}_t)\right)\right)\, d\widetilde{B}_t,
\end{aligned}
\end{equation}
has a unique almost surely continuous strong solution $\overline{V}$. Moreover, let $\rho_t$ be the law of $\overline{V}_t$, then the function $t\rightarrow v_\alpha (\rho_t)$ belongs to $\mathcal{C} ([0,T],\R^d)$. Further, one has
\begin{equation}\label{vs_p_bound}
\begin{aligned}
&\E \left[\sup_{s\in[0,T]} \|\overline{V}_s\|_2^p\right]\leq C \left(\E \|\overline{V}_0\|_2^p  +  k_p(T)\left(\|v^*\|_2^p+ \|u\|^p_{L^\infty([0,T])}\right)\right) e^{C\cdot T\cdot k_p(T)}
\end{aligned}
\end{equation}
where $k_p(t)$ is defined in Section~\ref{sec:constants}.
\end{lemma}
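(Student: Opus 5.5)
The proof has three parts: existence and uniqueness via coefficient regularity, the moment bound \eqref{vs_p_bound} via It\^o's formula and Gr\"onwall, and continuity of $t\mapsto v_\alpha(\rho_t)$ via Proposition~\ref{prop:stab}.

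\textbf{Regularity of the coefficients.} Write $G(v):=\nabla f(v)+\nabla M_{\mu g}(v-\mu\nabla f(v))$. By \eqref{eq:moreau_prox}, $\nabla M_{\mu g}$ is $\tfrac1\mu$-Lipschitz, since $\mathrm{prox}_{\mu g}$ is firmly nonexpansive. Together with Assumption~\ref{assump}(iii) this gives
\[
\|G(u)-G(v)\|_2\le \Big(L_f+\tfrac1\mu(1+\mu L_f)\Big)\|u-v\|_2=\Big(2L_f+\tfrac1\mu\Big)\|u-v\|_2 .
\]
So $G$ is globally Lipschitz. Since $G(v^*)=0$ (the stationarity identity from Section~\ref{sec:continuous}), we also get the linear growth bound $\|G(v)\|_2\le(2L_f+\tfrac1\mu)\|v-v^*\|_2$.

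\textbf{Existence and uniqueness.} For fixed continuous $u$, the drift $-\lambda_1(v-u_t)-\lambda_2G(v)$ and the diffusions $\sigma_1 D(v-u_t)$, $\sigma_2 D(G(v))$ are globally Lipschitz in $v$, uniformly in $t\in[0,T]$, because $D=\mathrm{diag}$ is linear. They also grow at most linearly, with constants involving $\|u\|_{L^\infty([0,T])}$. The classical Itô existence and uniqueness theorem for SDEs with Lipschitz, time-continuous coefficients then yields a unique, almost surely continuous strong solution, given the initial condition independent of the Brownian motions.

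\textbf{Moment bound.} Apply Itô's formula to $\|\overline V_t\|_2^p$, which is $C^2$ for $p\ge2$. The drift contributes terms bounded by $C\|\overline V_t\|_2^{p-1}\big(\|\overline V_t\|_2+\|u_t\|_2+\|v^*\|_2\big)$. The Itô correction contributes $C\|\overline V_t\|_2^{p-2}\big(\|\overline V_t\|_2^2+\|u_t\|_2^2+\|v^*\|_2^2\big)$. Young's inequality turns both into $C\|\overline V_t\|_2^p+C(\|u\|_{L^\infty}^p+\|v^*\|_2^p)$.

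For the supremum in time, control the martingale term with the Burkholder–Davis–Gundy inequality. Its quadratic variation is bounded by $\int_0^t\|\overline V_s\|_2^{2p-2}(\cdots)^2ds$. Pull out $\tfrac12\E\sup\|\overline V\|_2^p$ and absorb it on the left. This leaves terms like $\big(\int_0^T(\|u\|^p+\|v^*\|^p)ds\big)$ and $\big(\int_0^T\cdot\,ds\big)^{1/2}$-type contributions. These are what produce the factor $k_p(T)$, which mixes $T^p$, $T^{p/2}$, etc., via Hölder in time. Grönwall's inequality applied to $t\mapsto\E\sup_{s\le t}\|\overline V_s\|_2^p$ then gives \eqref{vs_p_bound}. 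A localization by stopping times is needed to ensure finiteness before absorbing. I expect this bookkeeping, getting exactly $k_p(T)$ and the $e^{CTk_p(T)}$ factor, to be the most tedious step, though it is not conceptually hard.

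\textbf{Continuity of $t\mapsto v_\alpha(\rho_t)$.} By \eqref{vs_p_bound}, $\rho_t\in\CP_{p,R}(\R^d)$ for all $t\in[0,T]$, with a uniform $R$. Since $p\ge p_\CM$, Proposition~\ref{prop:stab} gives
\[
\|v_\alpha(\rho_t)-v_\alpha(\rho_s)\|_2\le C_R\,\CW_p(\rho_t,\rho_s)\le C_R\big(\E\|\overline V_t-\overline V_s\|_2^p\big)^{1/p}.
\]
The right side tends to $0$ as $s\to t$ by dominated convergence. This uses almost sure path continuity together with the integrable dominating function $2^p\sup_{r}\|\overline V_r\|_2^p$. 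Alternatively, the same Itô/BDG estimates give $\E\|\overline V_t-\overline V_s\|_2^p\le C|t-s|^{p/2}$.
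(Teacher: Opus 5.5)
Your proposal is correct. The existence/uniqueness step is the same as the paper's, but the other two steps take different routes.

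\textbf{Moment bound.} For \eqref{vs_p_bound}, the paper never applies It\^o's formula to $\|\overline V_t\|_2^p$. It bounds the increment $\E[\sup_{s\in[r,t]}\|\overline V_s-\overline V_r\|_2^p]$ directly, using H\"older in time on the drift integral and BDG plus H\"older on the stochastic integral; this is \eqref{holder_pre}. It then takes $r=0$, uses $\|\overline V_s\|_2^p\le 2^{p-1}(\|\overline V_0\|_2^p+\|\overline V_s-\overline V_0\|_2^p)$, and applies Gr\"onwall. That is where the factors $T^{p-1}+T^{p/2-1}$ and $T^p+T^{p/2}$, hence $k_p(T)$, come from. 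The same \eqref{holder_pre} is reused later for equicontinuity in the Leray--Schauder argument.

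Your It\^o route gives a better exponent: the $\|\overline V\|_2^p$ term enters Gr\"onwall linearly, so you get $e^{CT}$, which is dominated by $Ce^{CTk_p(T)}$. But the polynomial prefactor does not come from H\"older in time in your route. With $T$-independent Young weights, the drift cross term gives $CT(\|u\|^p_{L^\infty([0,T])}+\|v^*\|_2^p)$. For $p>4$ one has $k_p(T)\sim T^{p/2-1}=o(T)$ as $T\to 0$, so \eqref{vs_p_bound} would not follow literally. Use $T$-dependent weights instead, e.g.\ $\int_0^t\|\overline V_s\|_2^{p-1}a\,ds\le \tfrac14\sup_{s\le t}\|\overline V_s\|_2^p+C(Ta)^p$ with $a=\|u\|_{L^\infty([0,T])}+\|v^*\|_2$. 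Do the same for the It\^o correction and the BDG term; this yields $(T^p+T^{p/2})a^p$.

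\textbf{Continuity.} For continuity of $t\mapsto v_\alpha(\rho_t)$, the paper applies dominated convergence separately to $\E[\overline V_r e^{-\alpha\CE(\overline V_r)}]$ and $\E[e^{-\alpha\CE(\overline V_r)}]$. This needs only continuity and lower boundedness of $\CE$, path continuity, and integrability of $\sup_s\|\overline V_s\|_2$. Your route through Proposition~\ref{prop:stab} needs $p\ge p_\CM$ and the uniform moment bound to fix $R$. In exchange it is quantitative, giving a H\"older-$\tfrac12$ modulus, which is exactly how the paper later proves equicontinuity of $\CT(S)$.
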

Now we are ready for the proof of Theorem~\ref{thm:mean-fild existence theorem}.
\begin{proof}[Proof of Theorem~\ref{thm:mean-fild existence theorem}]
We use Theorem~\ref{schroader} to do the proof. 
\paragraph{\textbf{Definition of $\CB$ and $\CT$}}
Let $\CB = \mathcal{C}([0, T], \R^d)$ equipped with $L^\infty ([0,T])$ norm. For any $u \in \mathcal{C}([0,T],\R^d)$, by Lemma~\ref{lemma:well-definedness of T}, there is a unique {$\rho_t^{(u)}$} determined by SDE \eqref{SDE_general}. We define $\CT(u)(\cdot):= v_\alpha ({\rho_{\cdot}^{(u)}})$. Again by Lemma~\ref{lemma:well-definedness of T}, $\CT (u) \in\mathcal{C} ([0,T], \R^d)$. Thus $\CT$ is a well-defined map from $\mathcal{C} ([0,T],\R^d)$ to itself. {Suppose we can show that $\CT$ has a fixed point $u^*$, then we have $u^*_{\cdot}=\CT(u^*)(\cdot):=v_\alpha(\rho_{\cdot}^{(u^*)})$. Setting $u_t=u_t^*=v_\alpha(\rho_t^{(u^*)})$ in \eqref{SDE_general}, by Lemma~\ref{lemma:well-definedness of T} and our definition of $\CT$, we have that $\overline{V}_t\sim\rho_t^{(u^*)}$ is the unique strong solution to \eqref{SDE_general}. Note that \eqref{SDE_general} with $u_t=v_\alpha(\text{Law}(\overline{V}_t))$ is the same as \eqref{SDE}, hence this proves that there exists a strong solution to \eqref{SDE}.}

In the following, we will show that $\CT$ defined above has a fixed point by showing that $\CT$ satisfies the conditions in Theorem~\ref{schroader}.
\paragraph{\textbf{Verify $\CT$ is compact}} Given any bounded set $S$ in $\CB$, where $$S=\{u \mid u\in\mathcal{C} ([0,T],\R^d), \|u\|_{L^\infty([0,T])} \leq R\},$$ the goal is to show $\overline{\CT(S)}$ is compact. By Arzel\`a-Ascoli theorem, it suffices to show $\CT (S)$ is pointwise-bounded and equicontinuous.
Given $u\in S$ and $0\leq r\leq s\leq T$. Let $\rho_t$ be the law of the solution $\overline{V}_t$ in \eqref{SDE_general} determined by $u$. We know {from Lemma~\ref{lemma:well-definedness of T} that} for all $t\in[0,T]$, $\rho_t\in\CP_{p, K}$, with $K \lesssim \left(1+\int \|v\|_2^p\,d\rho_0 + \|u\|^p_{L^\infty([0,T])}\right){\lesssim} \left(1+\int \|v\|_2^p\,d\rho_0 + R^p\right)$. Then, we have
\begin{align*}
&\|v_\alpha (\rho_r) - v_\alpha(\rho_s)\|_2\\
&\overset{(a)}{\lesssim}  \CW_p(\rho_r,\rho_s)\lesssim(\E [\|V_s - V_r\|_2^p ])^{1/p}\\
&\overset{(b)}{\lesssim} \Big(\left[(t-r)^{p-1} + (t-r)^{\tfrac{p}{2} - 1}\right] \times \int_r^t \left(1 + \|u_s\|_2^p + \E\|\overline{V}_s\|_2^p\right) \,ds\Big)^{1/p}\\
&\overset{(c)}{\lesssim} \Big(\left[(t-r)^{p-1} + (t-r)^{\tfrac{p}{2} - 1}\right]\times \E \left[\int_r^t \left(1 {+ \E\|\overline{V}_0\|_2^p} + \|u\|^p_{L^\infty([0,T])}\right) \,ds\right]\Big)^{1/p}\\
&\lesssim (t-r)^{1/2},
\end{align*}
where step $(a)$ is due to Proposition~\ref{prop:stab}, step $(b)$ is due to \eqref{holder_pre} in Supplementary~\ref{D}, and step $(c)$ is due to {\eqref{vs_p_bound}}. Here the constant does not depend on $r,s$ and $u$ as $\|u\|_{L^\infty ([0,T])}$ is bounded by $R$. Then for $0\leq r\leq s\leq T$, and $u\in S$, one has
\begin{gather}\label{Holder}
\|\CT(u)(r) - \CT(u)(s)\|_2 = \|v_\alpha (\rho_r) - v_\alpha(\rho_s)\|_2\lesssim (s-r)^{1/2}.
\end{gather}
This gives equicontinuity. For pointwise-boundedness, for any $u\in S$, by \eqref{Holder} and Proposition~\ref{A4}, one has
\begin{align*}
&\|\CT(u)(r)\|_2 \leq \|\CT(u)(0)\|_2 + \|\CT(u)(r) - \CT(u)(0)\|_2\\
&\leq \|v_\alpha(\rho_0)\|_2 + CT^{1/2} \lesssim  (\int \|v\|_2^p \,d\rho_0)^{1/p} + \sqrt{T}.
\end{align*}
Having obtained equicontinuity and pointwise-boundedness,  Arzel\`a-Ascoli theorem implies $\CT$ is compact.
\paragraph{\textbf{Show $\mathcal{U}:=\{u\mid u\in\CB, \sigma\CT(u)= u \text{ for some $\sigma\in[0,1]$}\}$ is bounded}}
{We will show that $\mathcal{U}$ is bounded by showing that its elements are uniformly bounded. For any $u\in\mathcal{U}$,} denoting $T^{p-1}+T^{\tfrac{p}{2} - 1}$ by $P(T)$, one has
\begin{align*}
&\E \left[\sup_{s\in[0,t]} \|\overline{V}_s\|_2^p\right]\\
&\overset{(a)}{\leq} C \Big( \E \|\overline{V}_0\|_2^p+  P(T)\cdot\E\left[\int_0^t \left(  \|v^*\|_2^p+ \|u_s\|_2^p + \|\overline{V}_s\|_2^p\right) \,ds\right]\Big)\\
&\overset{(b)}{=} C \Big( \E \|\overline{V}_0\|_2^p+  P(T)\cdot \E\left[\int_0^t   \left(\|v^*\|_2^p+ \|\sigma v_\alpha (\rho_s)\|_2^p + \|\overline{V}_s\|_2^p\right) \,ds\right]\Big)\\
&\overset{(c)}{\leq} C \Big( \E \|\overline{V}_0\|_2^p+  P(T)\cdot\E\left[\int_0^t   \left(\|v^*\|_2^p + \|\overline{V}_s\|_2^p\right) \,ds\right]\Big)
\end{align*}
\begin{align*}
&\leq C  \Big(\E \left[ \|\overline{V}_0\|_2^p \right]+k_p(T)\|v^*\|_2^p+k_p(T)\cdot \E \left[\int_0^t \sup_{r\in[0,s]} \|\overline{V}_r\|_2^p \,ds\right]\Big),
\end{align*}
where step $(a)$ follows from the first inequality in \eqref{holder_mid} in Supplementary~\ref{D}, step $(b)$ holds because $u=\sigma \CT(u)$ for $u\in\mathcal{U}$, and step $(c)$ uses Proposition~\ref{A4} for bounding $\|v_\alpha(\rho_s)\|_2^p$. Then applying the Gr\"onwall's inequality gives 
\begin{gather*}
\E \left[\sup_{s\in[0,t]} \|\overline{V}_s\|_2^p\right]\leq C \left(\E \left[ \|\overline{V}_0\|_2^p \right]+k_p(T)\|v^*\|_2^p\right)e^{C\cdot T\cdot k_p(T)}.
\end{gather*}
Thus by Proposition~\ref{A4} again, one has
\begin{align*}
\|u\|_{L^\infty([0,T])} &= \sigma \|\CT(u)\|_{L^\infty([0,T])} =\sigma \|v_\alpha(\rho_t)\|_{L^\infty([0,T])}\\
&\leq C \sup_{s\in[0,T]}(\E[ \|\overline{V}_s\|_2^p])^{1/p} \\
&\leq  C\left(\E \left[ \|\overline{V}_0\|_2^p \right]+k_p(T)\|v^*\|_2^p\right)^{1/p}e^{C\cdot T\cdot k_p(T)}.
\end{align*}
This proves {that all $u\in\mathcal{U}$ are uniformly bounded, which implies} the boundedness {of $\mathcal{U}$}. Then by Theorem~\ref{schroader}, the existence is established, as well as the bound in \eqref{mean-field-bound}.
\paragraph{\textbf{Uniqueness and weak solution}} 
This part can be established using well-developed techniques.
We refer the readers to, for example,~\cite[Theorem 2.4]{gerber2023mean},~\cite[Theorem 3.1]{cbo2}.
\end{proof}
\subsection{Proof of Theorem~\ref{thm:quantitative-mean-field-limit}}\label{app:proof of quantitative}

We now present the proof of Theorem~\ref{thm:quantitative-mean-field-limit}. 
As noted earlier, the argument follows the approach of Sznitman~\cite[Theorem 3.1]{chaintron2021propagation}. 
A key step is to control $
\E\!\left[\|v_\alpha(\widehat{\rho}^N_s) - v_\alpha(\rho_s)\|_2^2\right],$ where $\widehat{\rho}^N_s$ denotes the empirical measure of the $N$-particle system generated by~\eqref{dyn}, and $\rho_s$ is the law of the mean-field dynamics~\eqref{SDE}. 
This quantity can be bounded by
\[2 \Big( \E\left[\|v_\alpha(\widehat{\rho}^N_s) - v_\alpha(\widehat{\overline{\rho}}^N_s)\|_2^2\right] 
+ \E\left[\|v_\alpha(\widehat{\overline{\rho}}^N_s) - v_\alpha(\rho_s)\|_2^2\right] \Big),\]
where $\widehat{\overline{\rho}}^N_s$ is the empirical measure of $N$ i.i.d.~copies of the mean-field particle~\eqref{SDE}. 

For the first term, we apply the stability estimate in Proposition~\ref{prop:stab}, together with the consensus bound in Proposition~\ref{A4}, the moment bounds in Proposition~\ref{prop:micro_moment_bound}, and Theorem~\ref{thm:mean-fild existence theorem}, to obtain the following lemma.

\begin{lemma}\label{lem:first_term}
Let $\CE\in\CA(s,l)$ with $0<l\leq s+1$ or $\CE\in \CA(0,0)$, and $V_0\sim\rho_0 \in \CP (\R^d)$ has bounded moments of all orders. Moreover, let $\{V_t^{i,N}\}_{i=1}^N$ be the solution to \eqref{dyn} with $\{V_0^{i,N}\}_{i=1}^N\overset{\text{i.i.d.}}{\sim}\rho_0$, and let $\{\overline{V}_t^{i,N}\}_{i=1}^N$ be $N$ independent copies of the solution to \eqref{SDE} with $\{\overline{V}_0^{i,N}\}_{i=1}^N\overset{\text{i.i.d.}}{\sim}\rho_0$. Consider empirical distributions $
\widehat{\rho}_s^N = \sum_{i=1}^N \delta_{V^{i,N}_s}/N$ and $
\widehat{\overline{\rho}}_s^N = \sum_{i=1}^N \delta_{\overline{V}^{i,N}_s}/N$. Then 
\begin{align*}
&\E\left[\left\|v_\alpha (\widehat{\rho}^N_s) - v_\alpha (\widehat{\overline{\rho}}^N_s)\right\|_2^2\right]\\
&\leq C\cdot\left(\E \left[ \|V_0\|_2^8 \right]+k_8(T)\|v^*\|_2^8\right)^{3/4}\cdot e^{C\cdot T\cdot k_8(T)}\cdot N^{-1} \\
&+C\cdot \Psi_{\rho_0,T,v^*} \cdot \E\left[\left\|V^{1,N}_s - \overline{V}^{1,N}_s\right\|_2^2\right],
\end{align*}
where $\Psi_{\rho_0,T,v^*}$ is defined in Section~\ref{sec:constants}.
\end{lemma}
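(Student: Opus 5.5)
The plan is to control $\|v_\alpha(\widehat{\rho}^N_s) - v_\alpha(\widehat{\overline{\rho}}^N_s)\|_2$ pathwise with Proposition~\ref{prop:stab}, taking $p = p_\CM$ and letting the mean-field empirical measure $\widehat{\overline{\rho}}^N_s$ play the role of $\mu$. We then split according to whether its second moment is below or above a threshold $R$ chosen from $K$.

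The coupling enters only through the Wasserstein bound. The two empirical measures are coupled by matching indices, so
\[
\CW_2^2(\widehat{\rho}^N_s,\widehat{\overline{\rho}}^N_s)\le \frac1N\sum_{i=1}^N\|V^{i,N}_s-\overline{V}^{i,N}_s\|_2^2 .
\]
By exchangeability, the expectation of the right-hand side equals $\E\|V^{1,N}_s-\overline{V}^{1,N}_s\|_2^2$. Since $p_\CM\le 2$ in the relevant cases, or after working with $\CW_2$ directly using $\CW_{p}\le\CW_2$ for $p\le 2$, the stability estimate will be applied with $\CW_2$.

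\textbf{Good event.} Let $G=\{\int\|v\|_2^2\,d\widehat{\overline{\rho}}^N_s\le K+1\}$, where $K$ bounds $\E\|\overline{V}_s\|_2^2$ by Theorem~\ref{thm:mean-fild existence theorem} (with $p=2$). On $G$, the measure $\widehat{\overline{\rho}}^N_s$ lies in $\CP_{2,R}$ with $R=\sqrt{K+1}$. Proposition~\ref{prop:stab} then gives a deterministic Lipschitz constant
\[
C\bigl(1+e^{C(1+(2\sqrt{K+1})^l)}(1+(K+1)^{3/2})\bigr).
\]
Its square is exactly $\Psi_{\rho_0,T,v^*}$, and Proposition~\ref{prop:stab} requires no moment restriction on the other measure, here $\widehat{\rho}^N_s$. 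Squaring and taking expectations over $G$ yields the term $C\Psi\,\E\|V^{1,N}_s-\overline{V}^{1,N}_s\|_2^2$.

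\textbf{Bad event $G^c$.} This is where the $N^{-1}$ term comes from, and it is the main technical step. Here I would not use stability at all. Instead I would bound crudely via Proposition~\ref{A4}:
\[
\|v_\alpha(\widehat{\rho}^N_s)-v_\alpha(\widehat{\overline{\rho}}^N_s)\|_2^2\lesssim \|v_\alpha(\widehat{\rho}^N_s)\|_2^2+\|v_\alpha(\widehat{\overline{\rho}}^N_s)\|_2^2 .
\]
Applying Cauchy--Schwarz (or H\"older with exponents $4/3$ and $4$) then gives
\[
\E[X\mathbf 1_{G^c}]\le (\E X^{4/3})^{3/4}\,\P(G^c)^{1/4}.
\]
The fourth/eighth moments $\E X^{4/3}$ are controlled through Proposition~\ref{prop:micro_moment_bound} and \eqref{mean-field-bound}; they scale like the $8$th moment raised to the appropriate power. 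For $\P(G^c)$, I would note that $\int\|v\|_2^2\,d\widehat{\overline{\rho}}^N_s$ is an average of $N$ i.i.d. variables with mean at most $K$. Chebyshev with the fourth centered moment of an i.i.d. average, $\E|\bar Y-\E Y|^4\lesssim N^{-2}\E Y^4$ with $Y=\|\overline{V}_s\|_2^2$, gives
\[
\P(G^c)\lesssim N^{-2}\,\E\|\overline{V}_s\|_2^8 ,
\]
so $\P(G^c)^{1/4}\lesssim N^{-1/2}$. To reach the full $N^{-1}$, I would instead use Markov with a higher centered moment, or simply pair exponents so that $\P(G^c)^{1/2}$ appears, using $\P(G^c)\lesssim N^{-2}\E\|\overline V\|^8$. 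This yields the factor $(\E\|V_0\|_2^8+k_8\|v^*\|^8)^{3/4}e^{CTk_8}N^{-1}$. Balancing these H\"older exponents to exactly match the $3/4$ power and the $8$th moments is the delicate point. I expect the bookkeeping here, rather than any conceptual step, to be the obstacle, and I would adjust the threshold and exponents until the stated powers emerge.
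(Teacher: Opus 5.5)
Your proposal is correct and matches the paper's proof. The paper likewise splits on the event that the empirical second moment of the mean-field copies exceeds $K+1$; it uses the time-supremum, but your time-$s$ version works equally well. On the good event it applies Proposition~\ref{prop:stab} with $p=2$, $R=\sqrt{K+1}$ and the index coupling to produce the $\Psi_{\rho_0,T,v^*}$ term. On the bad event $G^c$ it uses fourth-moment concentration, $\mathbb{P}(G^c)\lesssim N^{-2}$ times an eighth-moment bound.

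The bookkeeping you flagged is settled by putting the large H\"older exponent on the random variable rather than on the indicator. With $\Delta=v_\alpha(\widehat{\rho}^N_s)-v_\alpha(\widehat{\overline{\rho}}^N_s)$,
\[
\E\bigl[\|\Delta\|_2^2\mathbf{1}_{G^c}\bigr]\le \E\bigl[\|\Delta\|_2^8\bigr]^{1/4}\,\mathbb{P}(G^c)^{3/4}\le \E\bigl[\|\Delta\|_2^8\bigr]^{1/4}\,\mathbb{P}(G^c)^{1/2}.
\]
Equivalently, your Cauchy--Schwarz pairing followed by $\E\|\Delta\|_2^4\le(\E\|\Delta\|_2^8)^{1/2}$ gives the same result. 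Either way you get exactly the power $1/4+1/2=3/4$ of the eighth-moment bound times $N^{-1}$.
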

For the second term, we rely on a result from importance sampling. 
Specifically, the following theorem is the vector-valued analogue of~\cite[Theorem~2.3]{agapiou2017importance}: 
while the original statement applies to scalar functions $\phi$, here we extend it to vector-valued functions.
\begin{theorem}[{\cite[Theorem 2.3]{agapiou2017importance}}, vector version]\label{thm:MSEbound}
Let $\mu\in\CP(\R^d)$. Consider functions $\phi:\R^d\rightarrow \R^d$ and $g: \R^d\rightarrow \R_{++}$.
Let $\phi_k$ be the function defined by the $k$-th entry of $\phi$. Suppose the below $C_{\text{MSE}}$ is finite,
\begin{align*}
C_{\text{MSE}}:=&\frac{3}{(\int g\,d\mu)^2} \sum_{k=1}^d\CM_2 (\phi_k g)\\
&+ \frac{3 C_{2m}^{1/m} \CM_{2m}(g)^{1/m}}{(\int g \,d\mu)^4} \cdot \left(\int \left(\|\phi\|_2g\right)^{2l}\,d\mu\right)^{1/l}\\
&+\frac{3C^{1/q}_{2q(1+\frac{1}{p})} \CM_{2q(1+\frac{1}{p})} (g)^{1/q}}{(\int g\,d\mu)^{2(1+\frac{1}{p})}} \left(\int \|\phi\|_2^{2p}\,d\mu\right)^{1/p}.
\end{align*}
Then one has
\begin{gather*}
\E\left[\left\|\frac{\int \phi g \,d\widehat{\mu}^N}{\int g \,d\widehat{\mu}^N} - \frac{\int \phi g \,d\mu}{\int g \,d\mu}\right\|_2^2\right] \leq \frac{1}{N}C_{\text{MSE}}.
\end{gather*}
Here, $\widehat{\mu}^N=\sum_{j=1}^N\delta_{u^{j,N}}/N$ is the empirical measure of $u^{1,N},\dots,u^{N,N}$, which are sampled independently from $\mu$. The constants $C_t$ satisfies $C_t^{1/t}\leq t-1$ and the two pairs of parameters $l,m$ and $p,q$ are conjugate to each other, i.e., $\frac{1}{l}+\frac{1}{m}=1$ and $\frac{1}{p}+\frac{1}{q}=1$.  $\CM_t (f)$ is the $t$-th central moment of $f:\R^d \rightarrow \R$ under the distribution $\mu$. 
\end{theorem}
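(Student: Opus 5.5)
The approach is to reduce the vector statement to the scalar result \cite[Theorem~2.3]{agapiou2017importance}, applied coordinatewise, and then resum. Set $Z=\int g\,d\mu$, $\widehat Z=\int g\,d\widehat\mu^N$, $A=\int \phi g\,d\mu$, and $\widehat A=\int\phi g\,d\widehat\mu^N$. The standard decomposition gives
\begin{align*}
\frac{\widehat A}{\widehat Z}-\frac{A}{Z}=\frac{\widehat A-A}{Z}+\frac{\widehat A}{\widehat Z}\cdot\frac{Z-\widehat Z}{Z}.
\end{align*}
I would redo the scalar argument in vector form rather than simply summing the scalar bounds over $k$. The reason is that naive summation would give terms like $\sum_k(\int(\phi_k g)^{2l}d\mu)^{1/l}$ instead of $(\int(\|\phi\|_2 g)^{2l}d\mu)^{1/l}$. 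Since $\sum_k\|x_k\|_{L^l}\ge\|\sum_k x_k\|_{L^l}$, the summed version is the weaker bound, so the vector form is what the statement requires.

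\textbf{Step 1.} Bound the first term: $\E\|\widehat A-A\|_2^2/Z^2=\frac1N\sum_k \CM_2(\phi_k g)/Z^2$. This uses independence coordinatewise and yields the first term of $C_{\text{MSE}}$ (the factor $3$ comes from splitting into three pieces with $(a+b+c)^2\le 3(a^2+b^2+c^2)$).

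\textbf{Step 2.} For the second term, follow Agapiou et al.: split on the event where $\widehat Z$ is small versus the complement, or equivalently write
\begin{align*}
\frac{\widehat A}{\widehat Z}\cdot\frac{Z-\widehat Z}{Z}=\frac{\widehat A}{Z}\cdot\frac{Z-\widehat Z}{Z}+\frac{\widehat A}{\widehat Z}\cdot\frac{(Z-\widehat Z)^2}{Z^2}.
\end{align*}
For the first piece, apply H\"older with exponents $(l,m)$:
\begin{align*}
\E\Big[\|\widehat A\|_2^2(Z-\widehat Z)^2\Big]\le (\E\|\widehat A\|_2^{2l})^{1/l}(\E|Z-\widehat Z|^{2m})^{1/m}.
\end{align*}
Bound $\E\|\widehat A\|_2^{2l}\le \int(\|\phi\|_2 g)^{2l}d\mu$ by Jensen, using convexity of $x\mapsto\|x\|_2^{2l}$ applied to the empirical average. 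Bound $\E|Z-\widehat Z|^{2m}\le C_{2m}\CM_{2m}(g)N^{-m}$ by the Marcinkiewicz--Zygmund inequality. Together these produce the second term with $N^{-1}$.

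\textbf{Step 3.} For the second piece, note that $\|\widehat A/\widehat Z\|_2\le \big(\int\|\phi\|_2^{2p}\,g\,d\widehat\mu^N/\widehat Z\big)^{1/(2p)}$ by Jensen under the self-normalized weights. Then apply H\"older with exponents $(p,q)$ together with Marcinkiewicz--Zygmund of order $2q(1+1/p)$, mirroring the scalar proof, to get the third term. I expect this step to be the main obstacle: the exponents must match exactly, $2q(1+\frac1p)$ and the power $2(1+\frac1p)$ on $Z$. The cleanest route is to copy the scalar argument line by line with $|\cdot|$ replaced by $\|\cdot\|_2$, checking that the only properties used are the triangle inequality, convexity of norms, and Jensen. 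All of these hold for $\|\cdot\|_2$.
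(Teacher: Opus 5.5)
There is a genuine gap in Steps 2--3, and it sits exactly where you expected the obstacle. Your identity
\[
\frac{\widehat A}{\widehat Z}-\frac{A}{Z}=\frac{\widehat A-A}{Z}-\frac{\widehat A\,\Delta}{Z^2}+\frac{\widehat A}{\widehat Z}\cdot\frac{\Delta^2}{Z^2},\qquad \Delta:=\widehat Z-Z,
\]
is correct. Your handling of the first two pieces also works and reproduces the first two terms of $C_{\text{MSE}}$: the vector Jensen step for $\|\widehat A\|_2^{2l}$ and Marcinkiewicz--Zygmund of order $2m$. The problem is the third piece. After squaring it is $\|\widehat A/\widehat Z\|_2^2\,\Delta^4/Z^4$, and no H\"older or Jensen manipulation of this can produce the stated third term. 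H\"older with $(p,q)$ gives a Marcinkiewicz--Zygmund moment of order $4q$ and the prefactor $Z^{-4}$, not order $2q(1+\tfrac1p)$ and $Z^{-2(1+1/p)}$. A pointwise comparison would need $|\Delta/Z|^{2-2/p}\lesssim 1$, which fails on $\{\widehat Z>2Z\}$.

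This is not cosmetic. To use the $\phi$-moment of order $2p$ that appears in $C_{\text{MSE}}$, your route requires $\CM_{4q}(g)<\infty$. Since $4q>2q(1+\tfrac1p)$ for $p>1$, finiteness of $C_{\text{MSE}}$ does not supply this. Two further points. The ``split on the event where $\widehat Z$ is small'' is not equivalent to the algebraic identity. And the dangerous event is $\widehat Z$ \emph{large}, not small: the self-normalized ratio is bounded by $\max_i\|\phi(u^{i,N})\|_2$ however small $\widehat Z$ is.

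The missing idea is the Doukhan--Lang refinement that the paper invokes (Lemma 6.4 of Agapiou et al., in the stronger form from the proof of Lemma 2 of Doukhan--Lang). For $\theta\in(0,1)$,
\[
\Big\|\frac{\widehat A}{\widehat Z}-\frac{A}{Z}\Big\|_2\le \frac{\|\widehat A-A\|_2}{Z}+\frac{\|\widehat A\|_2\,|\Delta|}{Z^2}+\Big\|\frac{\widehat A}{\widehat Z}\Big\|_2\Big|\frac{\Delta}{Z}\Big|^{1+\theta}.
\]
This is proved by splitting on $|\Delta|\le Z$.
\begin{itemize}
\item On $\{|\Delta|\le Z\}$, use your three-term identity together with $|\Delta/Z|^2\le|\Delta/Z|^{1+\theta}$.
\item On $\{|\Delta|>Z\}=\{\widehat Z>2Z\}$, use instead the two-term identity $\frac{\widehat A}{\widehat Z}-\frac{A}{Z}=\frac{\widehat A-A}{Z}-\frac{\widehat A}{\widehat Z}\frac{\Delta}{Z}$ together with $|\Delta/Z|\le|\Delta/Z|^{1+\theta}$.
\end{itemize}
Then bound $\|\widehat A/\widehat Z\|_2\le\max_i\|\phi(u^{i,N})\|_2$. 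Your self-normalized Jensen bound also works here, once you use that the weights are at most $1$. Either way this costs a factor $N^{1/p}$, since
\[
\E\big[\max_i\|\phi(u^{i,N})\|_2^{2p}\big]^{1/p}\le N^{1/p}\Big(\int\|\phi\|_2^{2p}\,d\mu\Big)^{1/p}.
\]
Marcinkiewicz--Zygmund of order $2q(1+\theta)$ gives $N^{-(1+\theta)}$. Choosing $\theta=1/p$ cancels the $N^{1/p}$ loss and yields exactly the exponents $2q(1+\tfrac1p)$ and $2(1+\tfrac1p)$ in the third term. Your proposal never introduces $\theta$ or accounts for the $N^{1/p}$ factor, and that mechanism is what fixes those exponents.
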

Using this theorem, we can control the second term as follows.
\begin{lemma}\label{thm:importance sampling}
Let $\CE\in\CA(s,l)$ with $0<l\leq s+1$ or $\CE\in\CA(0, 0)$. Let $\overline{V}_s$, whose law is denoted by $\rho_s$, be the solution of \eqref{SDE} given by Theorem~\ref{thm:mean-fild existence theorem}, with initial condition $\rho_0$ that admits bounded moments of all orders. Consider $N$ independent copies of $\overline{V}_s$, denoted by $\left\{\overline{V}^{j,N}_s\right\}_{j=1}^N$. Let the empirical distribution $
\widehat{\overline{\rho}}_s^N = \sum_{j=1}^N \delta_{\overline{V}^{j,N}_s}/N$. Then 
\begin{align*}
&\E \left[\left\|v_\alpha (\widehat{\overline{\rho}}_s^N) - v_\alpha (\rho_s)\right\|_2^2\right]\\
&\leq C \cdot \frac{e^{\Phi_{\rho_0, T,v^*}}\left(\max\{k_4(T),k_2^2(T)\}\|v^*\|_2^4 + \E[\|\overline{V}_0\|_2^4]\right)^{1/2}}{N},
\end{align*}
where $\Phi_{\rho_0,T,v^*}$ is defined in Section~\ref{sec:constants}.
\end{lemma}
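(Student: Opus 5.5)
The plan is to apply Theorem~\ref{thm:MSEbound} with $\mu=\rho_s$, $\phi(v)=v$ and $g(v)=\omega_\alpha(v)=e^{-\alpha\CE(v)}$. Then $\int\phi g\,d\widehat{\overline{\rho}}^N_s/\int g\,d\widehat{\overline{\rho}}^N_s=v_\alpha(\widehat{\overline{\rho}}^N_s)$ and the population ratio is $v_\alpha(\rho_s)$. Since the $\overline{V}^{j,N}_s$ are i.i.d.\ with law $\rho_s$, the theorem gives the error as $C_{\text{MSE}}/N$. It therefore remains to bound $C_{\text{MSE}}$ uniformly in $s\in[0,T]$ by the stated constant. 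I will take the conjugate pairs $l=m=2$ and $p=q=2$ (not to be confused with the growth exponent $l$ of Assumption~\ref{assump}). With these choices every moment of $\|v\|_2$ that appears is at most fourth order.

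\textbf{Numerator terms.} Normalize by $e^{\alpha\underline{\CE}}$, so that $0<g\le 1$. Central moments of $g$ are then bounded by absolute constants: $\CM_t(g)\le 2^t$. The remaining numerator terms are controlled by moments of $\rho_s$:
\[
\sum_k\CM_2(\phi_k g)\le \int\|v\|_2^2\,d\rho_s, \qquad \Big(\int(\|v\|_2 g)^4\,d\rho_s\Big)^{1/2}\le \Big(\int\|v\|_2^4\,d\rho_s\Big)^{1/2},
\]
and $(\int\|v\|_2^4\,d\rho_s)^{1/2}$ appears in the last term as well. By \eqref{mean-field-bound} with $p=4$ and $p=2$, all of these are bounded by
\[
C\Big(\max\{k_4(T),k_2^2(T)\}\|v^*\|_2^4+\E\|\overline{V}_0\|_2^4\Big)^{1/2}e^{CTk_4(T)}.
\]
For the second-moment term I use $\int\|v\|_2^2\,d\rho_s\le(\int\|v\|_2^4\,d\rho_s)^{1/2}$, and the square of the $p=2$ bound is where $k_2^2$ enters. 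The resulting exponential factors are absorbed into $e^{\Phi}$, since $\Phi$ contains $T\max\{k_2,k_4\}$.

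\textbf{Denominator.} This is the main obstacle: a lower bound on $\int e^{-\alpha(\CE-\underline{\CE})}\,d\rho_s$, which enters with powers up to $4$. I will use Assumption~\ref{assump}(iv) together with Markov's inequality. Let $M_2:=\int\|v\|_2^2\,d\rho_s$ and $r:=\sqrt{2M_2}$. Then $\rho_s(B_r(0))\ge 1/2$, and on $B_r(0)$ we have $\CE-\underline{\CE}\le c_u r^l+C_u$, so
\[
\int g\,d\rho_s\ge \tfrac12 e^{-\alpha(c_u(2M_2)^{l/2}+C_u)}.
\]
Inserting the bound \eqref{mean-field-bound} with $p=2$ for $M_2$, the term $M_2^{l/2}$ becomes $C(\E\|\overline{V}_0\|_2^2+k_2(T)\|v^*\|_2^2)^{l/2}e^{CTk_2(T)}$. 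Raising to the fourth power only multiplies $C$, which reproduces exactly the structure of $\Phi_{\rho_0,T,v^*}$. When $l=0$, $g$ is bounded above and below by constants and this step is trivial.

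Combining the numerator and denominator estimates gives $C_{\text{MSE}}\le Ce^{\Phi}(\max\{k_4,k_2^2\}\|v^*\|_2^4+\E\|\overline{V}_0\|_2^4)^{1/2}$ uniformly in $s\le T$, which completes the plan. The main bookkeeping care will be in making sure each exponential factor lands inside $\Phi$ rather than outside it.
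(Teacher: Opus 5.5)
Your proposal is correct and follows essentially the same route as the paper: you apply Theorem~\ref{thm:MSEbound} with all conjugate exponents equal to $2$, $\phi(v)=v$, $g=e^{-\alpha\CE}$. You then bound the central moments of the bounded weight by constants, bound the $\|v\|_2$-moments via Theorem~\ref{thm:mean-fild existence theorem} with $p=2,4$, and lower-bound $\int e^{-\alpha\CE}\,d\rho_s$ by Markov's inequality on a ball of radius $\sim\sqrt{\int\|v\|_2^2\,d\rho_s}$ combined with Assumption~\ref{assump}(iv). A minor point: if you bound $\int\|v\|_2^2\,d\rho_s$ by $(\int\|v\|_2^4\,d\rho_s)^{1/2}$, the squared $p=2$ bound (and hence $k_2^2$) is not needed, but either route yields the stated constant.
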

The proofs of Lemma~\ref{lem:first_term}, Theorem~\ref{thm:MSEbound}, and Lemma~\ref{thm:importance sampling} are deferred to Supplementary~\ref{E}. Using those two lemmas, we are ready to present the proof of Theorem~\ref{thm:quantitative-mean-field-limit}.
\begin{proof}[Proof of Theorem~\ref{thm:quantitative-mean-field-limit}]
We use the following notation:
\begin{align*}
&a(V,\rho) = -\lambda_1 (V - v_\alpha (\rho))-\lambda_2 \left(\nabla f(V) + \nabla M_{\mu g} \left(V - \mu \nabla f(V)\right)\right),\\
&b_1 (V,\rho) = \sigma_1 D\left(V - v_\alpha (\rho)\right),\\
&b_2 (V) = \sigma_2 D\left(\nabla f(V) + \nabla M_{\mu g}\left(V - \mu \nabla f(V)\right)\right).
\end{align*}
We start by Burkholder-Davis-Gundy (BDG) inequality~\cite[Theorem 7.3]{mao2007stochastic} to obtain, 
\begin{equation}\label{mean field limit main inequality}
\begin{aligned}
&&&\E \left[\sup_{s\in[0,t]} \left\|V_s^{i,N} - \overline{V}_s^{i,N}\right\|_2^2\right]\\
&\overset{(a)}{\leq}&&  2 \E \left(\int_0^t \left\| a\left(V^{i,N}_s,\widehat{\rho}^{N}_s\right) - a\left(\overline{V}^{i,N}_s,\rho_s\right)\right\|_2\,ds\right)^2\\
&&& + 2C_{\rm BDG} \E\Bigg(\int_0^t \left\| b_1\left(V^{i,N}_s, \widehat{\rho}^{N}_s\right) - b_1\left(\overline{V}^{i,N}_s,\rho_s\right)\right\|_F^2 \\
&&&+ \left\| b_2\left(V^{i,N}_s\right) - b_2\left(\overline{V}^{j,N}_s\right)\right\|_F^2\,ds\Bigg)\\
&\overset{(b)}{\leq}&& 2 \cdot T\cdot \E \left(\int_0^t \left\| a\left(V^{i,N}_s,\widehat{\rho}^{N}_s\right) - a\left(\overline{V}^{i,N}_s,\rho_s\right)\right\|_2^2\,ds\right)\\
&&& + 2C_{\rm BDG} \E\Bigg(\int_0^t \left\| b_1\left(V^{i,N}_s, \widehat{\rho}^{N}_s\right) - b_1\left(\overline{V}^{i,N}_s, \rho_s\right)\right\|_F^2 \\
&&&+ \left\| b_2\left(V^{i,N}_s\right) - b_2\left(\overline{V}^{i,N}_s\right)\right\|_F^2\,ds\Bigg)\\
&\overset{(c)}{\leq}&&C\left(1+T\right)\Bigg(\int_0^t \E \left[\left\|V_s^{i,N} - \overline{V}_s^{i,N}\right\|_2^2\right]  \\
&&&+ \E\left[\left\|v_\alpha (\widehat{\rho}^N_s) - v_\alpha (\rho_s)\right\|_2^2\right]\,ds\Bigg),
\end{aligned}
\end{equation}
where step (a) is derived from BDG inequality, step $(b)$ follows from Cauchy-Schwarz inequality and step $(c)$ holds since $\nabla f$, $M_{\mu g}$ and $D$ are Lipschitz.

Thus to apply the Gronall's inequality, it suffices to bound $\E\left[\left\|v_\alpha (\widehat{\rho}^N_s) - v_\alpha (\rho_s)\right\|_2^2\right]$. Notice it is bounded by
\begin{align*}
 2\left(\E\left[\left\|v_\alpha (\widehat{\rho}^N_s) - v_\alpha (\widehat{\overline{\rho}}^N_s)\right\|_2^2\right]+ \E\left[\left\|v_\alpha (\widehat{\overline{\rho}}^N_s) - v_\alpha (\rho_s)\right\|_2^2\right]\right).
\end{align*}
For the first term, by Lemma~\ref{lem:first_term}, one has
\begin{align*}
&\E\left[\left\|v_\alpha (\widehat{\rho}^N_s) - v_\alpha (\widehat{\overline{\rho}}^N_s)\right\|_2^2\right] \\
&\leq C\cdot\left(\E \left[ \|V_0\|_2^8 \right]+k_8(T)\|v^*\|_2^8\right)^{3/4}\cdot e^{C\cdot T\cdot k_8(T)}\cdot N^{-1}+C\cdot \Psi_{\rho_0,T,v^*} \cdot \E\left[\left\|V^{1,N}_s - \overline{V}^{1,N}_s\right\|_2^2\right],
\end{align*}
where $\Psi_{\rho_0,T,v^*}$ is defined in Section~\ref{sec:constants}.
For the second term, from Lemma~\ref{thm:importance sampling}, one has
\begin{align*}\label{deviation bound}
\E\left[\left\|v_\alpha (\widehat{\overline{\rho}}^N_s) - v_\alpha (\rho_s)\right\|_2^2\right]\leq \frac{C \cdot e^{\Phi_{\rho_0, T,v^*}}\left(\max\{k_4(T),k_2^2(T)\}\|v^*\|_2^4 + \E[\|V_0\|_2^4]\right)^{1/2}}{N},
\end{align*}
where $\Phi_{\rho_0,T,v^*}$ is defined in Section~\ref{sec:constants}.
Then combining the above two inequalities, one has
\begin{align*}
\E\left[\left\|v_\alpha (\widehat{\rho}^N_s) - v_\alpha (\rho_s)\right\|_2^2\right] \leq C \cdot \Lambda_{\rho_0,T,v^*} \cdot N^{-1} + C\cdot \Psi_{\rho_0,T,v^*} \cdot \E\left[\left\|V^{1,N}_s - \overline{V}^{1,N}_s\right\|_2^2\right],
\end{align*}
where $\Lambda_{\rho_0,T,v^*}$ is defined in Section~\ref{sec:constants}.
Plugging the above inequality into \eqref{mean field limit main inequality}, one obtains
\begin{align*}
&\E \left[\sup_{s\in[0,t]} \left\|V_s^{i,N} - \overline{V}_s^{i,N}\right\|_2^2\right]
\\
&\leq  C \cdot \left(T+T^{2}\right)\cdot\Lambda_{\rho_0,T,v^*}\cdot N^{-1}+ C\cdot\left(1+T\right)\cdot \left(1+\Psi_{\rho_0,T,v^*}\right)\cdot \left(\int_0^t\E\left[\left\|V^{1,N}_s - \overline{V}^{1,N}_s\right\|_2^2\right]\,ds \right)\\
&\leq  C \cdot \left(T+T^{2}\right)\cdot\Lambda_{\rho_0,T,v^*}\cdot N^{-1}+ C\cdot\left(1+T\right)\cdot \left(1+\Psi_{\rho_0,T,v^*}\right)\cdot\left(\int_0^t  \sup_{r\in [0, s]}\E\left[\left\|V^{1,N}_r - \overline{V}^{1,N}_r\right\|_2^2\right]\,ds \right).
\end{align*}
Then one can use Gr\"onwall inequality to obtain
\begin{align*}
&\E \left[\sup_{s\in[0,T]} \left\|V_s^{i,N} - \overline{V}_s^{i,N}\right\|_2^2\right] \\
&\leq  C \left(T+T^{2}\right)\cdot e^{ C\cdot\left(T+T^{2}\right)\cdot \left(1+\Psi_{\rho_0,T,v^*}\right)} \cdot \Lambda_{\rho_0,T,v^*}\cdot N^{-1}.
\end{align*}
This completes the proof.
\end{proof}
\subsection{Proof of Theorem~\ref{thm:wellposed for micro}}\label{app:proof of micro}
Finally, we present the proof of Theorem~\ref{thm:wellposed for micro}, which follows standard techniques.
\begin{proof}[Proof of Theorem~\ref{thm:wellposed for micro}]
The proof largely follows \cite[Theorem 2.1]{cbo2} and \cite[Theorem 2.2]{gerber2023mean}. We use $\widehat{\rho}^N_t$ to denote the empirical measure of $V^{1,N}_t,\dots,V^{N,N}_t$. Since this is an existence and uniqueness result, WLOG, we assume $\lambda_1=\lambda_2=\sigma_1=\sigma_2=1$. We can concatenate $ \left\{V_{t}^{i,N}\right\}_{i=1}^{N} $ into one vector and put them in one equation. To be specific, we define
\begin{gather*}
	V_{t}=\left(\left(V_{t}^{1,N}\right)^{\top},...,\left(V_{t}^{N,N}\right)^{\top}\right)^{\top}.
\end{gather*}
Then $ V_{t} $ is a vector in $ \mathbb{R}^{Nd} $ for each fixed $ t $ and it will satisfy the following equation:
\begin{gather}\label{oneeq}
	dV_{t}=\left(F_{N}(V_{t})+\widetilde{F}_N(V_{t})\right)\,dt+M_{N}(V_{t})\,dB_{t}^{(N)}.
\end{gather}
Here $ B^{(N)} $ is the standard Wiener process in $ \mathbb{R}^{2Nd} $. \begin{gather*}
	F_{N}(V_t)=\left(\left(F_{N}^{1}\left(V_t\right)\right)^{\top},...,\left(F_{N}^{N}\left(V_t\right)\right)^{\top}\right)^{\top}\in\mathbb{R}^{Nd},
\end{gather*}
where $
	F_{N}^{i}(V_t)=-\left(V^{i,N}_t - v_\alpha(\widehat{\rho}^N_t)\right)\in\mathbb{R}^{d}.$
 Further,
 \begin{gather*}
	\widetilde{F}_{N}(V_t)=\left(\left(\widetilde{F}_{N}^{1}\left(V_t\right)\right)^{\top},...,\left(\widetilde{F}_{N}^{N}\left(V_t\right)\right)^{\top}\right)^{\top}\in\mathbb{R}^{Nd},
\end{gather*}
where
\begin{gather*}
	\widetilde{F}_{N}^{i}(V_t)=-\left(\nabla f(V^{i,N}_t) + \nabla M_{\mu g} \left(V_t^{i,N} - \mu \nabla f(V^{i,N}_t)\right)\right).
 \end{gather*}
 And
\begin{gather*}
M_{N}(V_t)=\left(\text{diag}\left(F_N(V_t)\right),\text{diag}\left(\widetilde{F}_N(V_t)\right)\right)\in\mathbb{R}^{Nd\times 2Nd},
\end{gather*}
Thus it suffices to prove the well-posedness result of equation \eqref{oneeq}. From \cite[Lemma 2.1]{cbo2}, $F_N$ is locally Lipschitz and satisfies $\|F_N(V)\|_2\leq C\|V\|_2$. Also, we know $\widetilde{F}_N (V)$ is globally Lipschitz and satisfies $\|\widetilde{F}_N (V)\|_2\leq C (1+ \|V\|_2)$. By \cite[Theorem 3.5]{khasminskii2012stochastic}, it suffices to find a function $\phi\in \mathcal{C}^2(\R^{Nd},[0,\infty))$ such that
\begin{itemize}
\item $\lim_{\|v\|_2\rightarrow \infty}\phi(v) = +\infty.$
\item There is a constant $c$ such that for all $V\in \R^{Nd}$,
\begin{align*}
\CL \phi (V)
&:= \left(F_{N}(V) + \widetilde{F}_N (V)\right)\cdot \nabla \phi(V)\\
&+ \frac{1}{2}\text{tr}\left(M_N(V)^\top \nabla^2 \phi(V) M_N(V)\right)\\
&\leq c \phi(V).
\end{align*}
\end{itemize}
We pick $\phi(V) = \tfrac{1}{2}\|V\|_2^2 + 1$. The first is trivially satisfied. For the second, using the facts that $\|F_N(V)\|_2\leq C \|V\|_2$ and $\|\widetilde{F}_N (V)\|_2\leq C (1+ \|V\|_2)$, one can verify that
\begin{align*}
\CL \phi (V) &\lesssim (1 + \|V\|_2)\|V\|_2 + \frac{1}{2} (\|V\|_2^2 +  (1+\|V\|_2)^2) \\
&\leq c \phi(V).
\end{align*}
This completes the proof.
\end{proof}
\newpage
\section{Supplementary Materials}
This section contains all lengthy and technical proofs for the results presented in the main paper.
\section{Technical Proofs in Appendix~\ref{app:stab}}\label{B}
\begin{proof}[Proof of Lemma~\ref{hglemma}]
From the proof of \cite[Lemma A.1]{gerber2023mean}, one knows that 
\begin{align*}
&\left\| \frac{\int g \,d\mu}{\int h \,d\mu} - \frac{\int g \,d\nu}{\int h \,d\nu}\right\|\\
&\leq L(\eta^{-1} + \eta^{-2}\int \|g\|\,d\nu) \left(\int\int (1+\|u\|+\|v\|)^{\frac{\xi p}{p-1}}\,d\mu(u)\,d\nu(v)\right)^{\frac{p-1}{p}}\cdot \CW_p(\mu,\nu).
\end{align*}
Notice 
\begin{align*}
\int \|g\|\,d\nu
&\leq \int \left(\|g(0)\| + L(1+\|u\|)^\xi\right)\|u\| \,d\nu\\ 
&\leq \|g(0)\|\int \|u\|\, d\nu + L \int\left(1+\|u\|\right)^{\xi+1}\,d\nu\\
&\leq \|g(0)\| \left(\int \|u\|^p\, d\nu\right)^{\frac{1}{p}} + L \int\left(1+\|u\|\right)^p\,d\nu\\
&\leq  \|g(0)\| \left(\int \|u\|^p\, d\nu\right)^{\frac{1}{p}} + 2^{p-1}L\left(1 + \int \|u\|^p \, d\nu \right)\\
&\leq  C_{p,L} \left(\|g(0)\|R + R^p + 1\right).
\end{align*}
Moreover, since $\frac{\xi p}{p-1}\leq p$ by the condition $p\geq \xi+1$, we have
\begin{align*}
\left(\int\int (1+\|u\|+\|v\|)^{\frac{\xi p}{p-1}}\,d\mu(u)\,d\nu(v)\right)^{\frac{p-1}{p}}&\leq \left(\int\int (1+\|u\|+\|v\|)^{p}\,d\mu(u)\,d\nu(v)\right)^{\frac{p-1}{p}}\\
&\leq C_p (1+R^p)^{\frac{p-1}{p}}\leq C_p(1+R^{p-1}).
\end{align*}
The above inequalities finish the proof.
\end{proof}

\section{Technical Proofs in Appendix~\ref{app:moment}}\label{C}
\begin{proof}[Proof of Proposition~\ref{prop:micro_moment_bound}]
{We will bound each of the three terms in \eqref{three_moment_bounds}. To start with, consider the first term.}
We first note by Proposition~\ref{A4}, it holds that
\begin{gather*}
 \left\|-\lambda_1 \left(V^{i,N}_t - v_\alpha (\widehat{\rho}^N_t)\right)\right\|_2\leq \lambda_1 (\|V^{i,N}_t\|_2 + \|v_\alpha(\widehat{\rho}^N_t)\|_2)\leq C \left(\|V^{i,N}_t\|_2 + \left(\int \|v\|_2^p\,d\widehat{\rho}^N_t\right)^{1/p}\right).
\end{gather*}
Moreover, by the relation between $\nabla M_{\mu g}$ and $\mathrm{prox}_{\mu g}$ in \eqref{eq:moreau_prox}, it is easy to verify that $\nabla f(v^*) + \nabla M_{\mu g} \left(v^* - \mu \nabla f(v^*)\right)=0$. Together with the Lipschitz properties of $\nabla f$ and $\mathrm{prox}_{\mu g}$, one has
\begin{align*}
&\left\|-\lambda_2 \left( \nabla f(V^{i,N}_t) + \nabla M_{\mu g} \left(V_t^i - \mu \nabla f(V^{i,N}_t)\right)\right)\right\|_2\\
&\qquad\leq \lambda_2 \left(L_f \|V^{i,N}_t - v^*\|_2 + \frac{1}{\mu}\left(\|V^{i,N}_t-v^*\|_2 + \mu L_f \|V^{i,N}_t - v^*\|_2\right)\right)\\
&\qquad\leq C (\|v^*\|_2+\|V^{i,N}_t\|_2).
\end{align*}
Similarly, one can deduce
\begin{gather*}
 \left\|\sigma_1 D\left(V^{i,N}_t - v_\alpha (\widehat{\rho}^N_t)\right)\right\|_F\leq C \left(\|V^{i,N}_t\|_2 + \left(\int \|v\|_2^p\,d\widehat{\rho}^N_t\right)^{1/p}\right).
\end{gather*}
and
\begin{gather*}
\left\|\sigma_2 \left( \nabla f(V^{i,N}_t) + \nabla M_{\mu g} \left(V_t^{i,N} - \mu \nabla f(V^{i,N}_t)\right)\right)\right\|_2\leq C (\|v^*\|_2+\|V^{i,N}_t\|_2).
\end{gather*}
Then {following similar steps in \eqref{holder_pre} with $u_s = v_\alpha(\widehat{\rho}^N_s)$, one obtains
\begin{gather*}
\E \left[\sup_{s\in[r,t]} \|V^{i,N}_s - V^{i,N}_r\|_2^p\right]\leq C \left[(t-r)^{p-1} + (t-r)^{\tfrac{p}{2} - 1}\right]  \cdot\int_r^t \left(\|v^*\|_2^p + \|v_\alpha(\widehat{\rho}^N_s)\|_2^p + \E\|V^{i,N}_s\|_2^p\right) \,ds.
\end{gather*}
Then setting $r=0$ and following the same step as the first inequality in \eqref{holder_mid}}, one can deduce
\begin{equation*}
\begin{aligned}
&\E \left[\sup_{s\in[0,t]} \|V^{i,N}_s\|_2^p\right] \\
 &{\leq C \cdot \left(\E \|V^{i,N}_0\|_2^p + \left(T^{p-1} + T^{\frac{p}{2} - 1}\right)\int_0^t \left(\|v^*\|_2^p + \|v_\alpha(\widehat{\rho}^N_s)\|_2^p +  \E \|V^{i,N}_s\|_2^p\right) \,ds\right)}\\
 &\overset{(a)}{\leq} {C \cdot \left(\E \|V^{i,N}_0\|_2^p + \left(T^{p-1} + T^{\frac{p}{2} - 1}\right)\int_0^t \left(\|v^*\|_2^p + \int \|v\|_2^p\,d\widehat{\rho}^N_s +  \E \|V^{i,N}_s\|_2^p\right) \,ds\right)},
 \end{aligned}
 \end{equation*}
 where (a) is due to Proposition~\ref{A4}.
Then one has
\begin{align*}
\E \left[\sup_{s\in[0,t]} \|V_s^{i,N}\|_2^p\right]&\leq C \left(\E [\|V^{i,N}_0\|_2^p]+k_p(T)\cdot \|v^*\|_2^p + k_p(T)\cdot  \int_0^t \E\left[\|V^{i,N}_s\|_2^p + \int \|v\|_2^p\,d\widehat{\rho}^N_s\right] \,ds\right).
\end{align*}
Also, by the permutation-invariance of the empirical measure, one has
\begin{gather*}
\E \int \|v\|_2^p \,d\widehat{\rho}^N_s = \E \|V^{i,N}_s\|_2^p.
\end{gather*}
Thus,
\begin{align*}
\E \left[\sup_{s\in[0,t]} \|V_s^{i,N}\|_2^p\right] &\leq C \left(\E [\|V^{i,N}_0\|_2^p]+k_p(T)\cdot \|v^*\|_2^p + k_p(T)\cdot  \int_0^t \E\left[\|V^{i,N}_s\|_2^p \right] \,ds\right)\\
&{\leq C \left(\E [\|V^{i,N}_0\|_2^p]+k_p(T)\cdot \|v^*\|_2^p + k_p(T)\cdot  \int_0^t \E\left[\sup_{r\in[0,s]}\|V^{i,N}_r\|_2^p \right] \,ds\right)}.
\end{align*}
Gr\"onwall's inequality then yields
\begin{gather*}
\E \left[\sup_{s\in[0,T]} \|V_s^{i,N}\|_2^p\right] \leq C \left(\E [\|V^{i,N}_0\|_2^p]+k_p(T)\cdot \|v^*\|_2^p\right)\cdot e^{C \cdot T\cdot k_p(T)}.
\end{gather*}
{The second term in \eqref{three_moment_bounds} can be bounded as follows}
\begin{align*}
&\E \left[\sup_{t\in[0,T]}\int \|v\|_2^p\,d\widehat{\rho}^N_t\right]=\E \left[\sup_{t\in[0,T]}\frac{1}{N} \sum_{i=1}^N\|V^{i,N}_t\|_2^p\right]\\
&\leq \E \left[\frac{1}{N} \sum_{i=1}^N \sup_{t\in[0,T]}\|V^{i,N}_t\|_2^p\right]=\E \left[\sup_{t\in[0,T]} \|V_t^{i,N}\|_2^p\right].
\end{align*}
And the {the bound for the third term in \eqref{three_moment_bounds}} follows from Proposition~\ref{A4}.
\end{proof}
\section{Technical Proofs in Appendix~\ref{app:proof of macro}}\label{D}
\begin{proof}[Proof of Lemma~\ref{lemma:well-definedness of T}]
We omit the existence and uniqueness as one can easily obtain those results via \cite[Theorem 5.2.1]{oksendal2003stochastic} thanks to the boundedness of $u$. We only prove the continuity of $v_\alpha (\rho_t)$ and the expectation bound here. Fix $0\leq r\leq t\leq T$, 
we have
\begin{equation}\label{holder_pre}
\begin{aligned}
&\E \left[\sup_{s\in[r,t]} \|\overline{V}_s - \overline{V}_r\|_2^p\right]\\
&\overset{(a)}{\leq} 2^{p-1} \E \left[\left(\int_r^t \|\lambda_1 (\overline{V}_s - u_s) + \lambda_2\left( \nabla f(\overline{V}_s) + \nabla M_{\mu g} \left(\overline{V}_s - \mu \nabla f(\overline{V}_s)\right)\right)\|_2 \,ds\right)^p\right]\\
&+C_{\rm BDG}2^{p-1} \E \left[\left(\int_r^t \left(\|\sigma_1 D(\overline{V}_s - u_s)\|_F^2 + \|\sigma_2D\left( \nabla f(\overline{V}_s) + \nabla M_{\mu g} \left(\overline{V}_s - \mu \nabla f(\overline{V}_s)\right)\right)\|_F^2 \right)\,ds\right)^{p/2}\right]\\
&\overset{(b)}{\leq} C \left[(t-r)^{p-1} + (t-r)^{\tfrac{p}{2} - 1}\right]  \cdot \E \left[\int_r^t \left(\|v^*\|_2^p + \|\overline{V}_s - u_s\|_2^p + \|\overline{V}_s\|_2^p\right) \,ds\right]\\
&{\leq C \left[(t-r)^{p-1} + (t-r)^{\tfrac{p}{2} - 1}\right]  \cdot\int_r^t \left(\|v^*\|_2^p + \|u_s\|_2^p + \E\|\overline{V}_s\|_2^p\right) \,ds},
\end{aligned}
\end{equation}
where step $(a)$ follows from Burkholder-Davis-Gurdy inequality~\cite[Theorem 7.3]{mao2007stochastic} and the constant $C_{\rm BDG}$ only depends on $p$, and step $(b)$ follows from global Lipschitzness of $\nabla f$, $\nabla M_{\mu g}$ and $D$, $\nabla f(v^*) + \nabla M_{\mu g} \left(v^* - \mu \nabla f(v^*)\right) = 0$, and H\"older's inequality. Now we take $r=0$ to obtain
\begin{equation}\label{holder_mid}
\begin{aligned}
&\E \left[\sup_{s\in[0,t]} \|\overline{V}_s\|_2^p\right] \\
 &{\leq C \cdot \left(\E \|\overline{V}_0\|_2^p + \left(T^{p-1} + T^{\frac{p}{2} - 1}\right)\int_0^t \left(\|v^*\|_2^p + \|u_s\|_2^p +  \E \|\overline{V}_s\|_2^p\right) \,ds\right)}\\
&{\leq C \cdot \left(\E \|\overline{V}_0\|_2^p  + \left(T^p + T^{p/2}\right)\cdot \left(\|v^*\|_2^p+ \|u\|^p_{L^\infty([0,T])}\right) + \left(T^{p-1}+T^{\frac{p}{2} - 1}\right)\cdot \int_0^t \E\left[ \sup_{r\in[0,s]}\|\overline{V}_r\|_2^p\right]\,ds\right)}\\
& {\leq C \cdot \left(\E \|\overline{V}_0\|_2^p  +  k_p(T)\left(\|v^*\|_2^p+ \|u\|^p_{L^\infty([0,T])}\right) +  k_p(T)\int_0^t \E\left[ \sup_{r\in[0,s]}\|\overline{V}_r\|_2^p\right]\,ds\right).}
\end{aligned}
\end{equation}
Gr\"onwall's inequality then gives 
\begin{gather*}
\E \left[\sup_{s\in[0,T]} \|\overline{V}_s\|_2^p\right]\leq C\cdot \left(\E \|\overline{V}_0\|_2^p  +  k_p(T)\left(\|v^*\|_2^p+ \|u\|^p_{L^\infty([0,T])}\right)\right)\cdot e^{C\cdot T\cdot k_p(T)}.
\end{gather*}
This gives the expectation bound. {Note that $\|\overline{V}_s\|_2\leq\|\overline{V}_s\|_2^p$ if $\|\overline{V}_s\|_2\geq 1$.}
Thus for any $s\in[0,T]$, one has
\begin{gather*}
\|\overline{V}_s e^{-\alpha \CE(\overline{V}_s)}\|_2 \leq e^{-\alpha \underline{\CE}}\|\overline{V}_s\|_2\leq e^{-\alpha \underline{\CE}} \max\left\{1,\sup_{s\in[0,T]}\|V_s\|_2^p\right\} \in L^1 (\Omega)
\end{gather*}
and $e^{-\alpha \CE(\overline{V}_s)} \leq e^{-\alpha \underline{\CE}}\in L^1(\Omega)$.
By dominated convergence theorem, for $s\in[0,T]$,
\begin{gather*}
\lim_{r\rightarrow s}v_\alpha (\rho_r) = \lim_{r\rightarrow s}\frac{\E \overline{V}_r e^{-\alpha \CE(\overline{V}_r)}}{\E e^{-\alpha \CE( \overline{V}_r)}} = \frac{\E \overline{V}_s e^{-\alpha \CE(\overline{V}_s)}}{\E e^{-\alpha \CE(\overline{V}_s)}} = v_\alpha (\rho_s).
\end{gather*}
This proves the continuity.
\end{proof}
\section{Technical Proofs in Appendix~\ref{app:proof of quantitative}}\label{E}
\begin{proof}[Proof of Lemma~\ref{lem:first_term}]
From Theorem~\ref{thm:mean-fild existence theorem}, we know that for all $i$,
\begin{gather*}
\E\left[\sup_{t\in[0,s]}\left\|\overline{V}^{i,N}_{t}\right\|_2^2\right] \leq K:= C\cdot \left(\E \left[ \|V_0\|_2^2 \right]+k_2(T)\|v^*\|_2^2\right)\cdot e^{C\cdot T\cdot k_2(T)}.
\end{gather*}
Thus
\begin{gather*}
\E\left[\frac{1}{N}\sum_{i=1}^N \sup_{t\in[0,s]} \left\|\overline{V}_t^{i,N}\right\|_2^2\right]\leq K.
\end{gather*}Then we consider the set
\begin{gather*}
\Omega^N_{s}=\left\{\frac{1}{N}\sum_{i=1}^N \sup_{t\in[0,s]} \left\|\overline{V}_t^{i,N}\right\|_2^2 \geq K + 1\right\}.
\end{gather*}
From \cite[Lemma 2.5]{gerber2023mean} with $Z_{i} = \sup_{t\in[0,s]}\left\|\overline{V}^{i,N}_{t}\right\|_2^2$, $R=K+1$ and $r=4$, one has
\begin{equation}\label{probability_bound}
\begin{aligned}
\P (\Omega^{N}_s)&\leq C  \E\left[|Z_1 - \E [Z_1]|^4\right] \cdot N^{-2}\\
&\leq C  \E\left[|Z_1|^4\right] \cdot N^{-2} \\
&= C \E\left[\sup_{t\in[0,s]} \left\|\overline{V}^{1,N}_t\right\|_2^8\right]\cdot N^{-2}\\
&\leq C\cdot \left(\E \left[ \|V_0\|_2^8 \right]+k_8(T)\|v^*\|_2^8\right)\cdot e^{C\cdot T\cdot k_8(T)}\cdot N^{-2},
\end{aligned}
\end{equation}
where $C$ is an absolute constant and we used Theorem~\ref{thm:mean-fild existence theorem} in the last inequality. We can then compute 
\begin{gather}\label{omega split}
\E\left[\left\|v_\alpha (\widehat{\rho}^N_s) - v_\alpha (\widehat{\overline{\rho}}^N_s)\right\|_2^2\right] = \E\left[\left\|v_\alpha (\widehat{\rho}^N_s) - v_\alpha (\widehat{\overline{\rho}}^N_s)\right\|_2^2 \mathbbm{1}_{\Omega^N_s}\right] + \E\left[\left\|v_\alpha (\widehat{\rho}^N_s) - v_\alpha (\widehat{\overline{\rho}}^N_s)\right\|_2^2\mathbbm{1}_{(\Omega^{N}_s)^c}\right]
\end{gather}
The motivation for this splitting is that the event $\Omega_s^N$ occurs with probability decaying in $N$, so the first term yields the desired dependence on the number of particles. Although the complement event $(\Omega_s^N)^c$ may have large probability, conditional on this event we can bound $
\E\left[\big\|v_\alpha(\widehat{\rho}^N_s) - v_\alpha(\widehat{\overline{\rho}}^N_s)\big\|_2^2 \,\mathbbm{1}_{(\Omega^N_s)^c}\right]$ by a constant multiple of the Wasserstein distance between $\widehat{\overline{\rho}}^N_s$ and $\widehat{\rho}^N_s$, by virtue of Proposition~\ref{prop:stab}. This quantity, in turn, can be bounded by $\E[\|V^{1,N}_s - \overline{V}^{1,N}_s\|_2^2]$, which enables the subsequent application of Grönwall’s inequality.

\paragraph{\textbf{Upper bound of $\E\left[\left\|v_\alpha (\widehat{\rho}^N_s) - v_\alpha (\widehat{\overline{\rho}}^N_s)\right\|_2^2 \mathbbm{1}_{\Omega^N_s}\right] $}} By H\"older inequality, one has
\begin{align*}
\E\left[\left\|v_\alpha (\widehat{\rho}^N_s) - v_\alpha (\widehat{\overline{\rho}}^N_s)\right\|_2^2 \mathbbm{1}_{\Omega^N_s}\right]&\leq  \E\left[\left\|v_\alpha (\widehat{\rho}^N_s) - v_\alpha (\widehat{\overline{\rho}}^N_s)\right\|_2^8 \right]^{1/4}\cdot \P(\Omega^N_s)^{3/4}\\
&\leq \E\left[\left\|v_\alpha (\widehat{\rho}^N_s) - v_\alpha (\widehat{\overline{\rho}}^N_s)\right\|_2^8 \right]^{1/4}\cdot \P(\Omega^N_s)^{1/2}.
\end{align*}
Then by Proposition~\ref{A4}, Proposition~\ref{prop:micro_moment_bound} and Theorem~\ref{thm:mean-fild existence theorem},
\begin{align*}
\E\left[\left\|v_\alpha (\widehat{\rho}^N_s) - v_\alpha (\widehat{\overline{\rho}}^N_s)\right\|_2^8 \right]&\leq C \cdot \left(\E\left[\left\|v_\alpha (\widehat{\rho}^N_s)\right\|_2^8\right] + \E\left[\left\|v_\alpha (\widehat{\overline{\rho}}^N_s)\right\|_2^8\right]\right)\\
&\leq C\cdot \E\left[\int \|v\|_2^8 \,d\widehat{\rho}_s^N +\int \|v\|_2^8 \,d\widehat{\overline{\rho}}^N_s \right]\\
&=C\cdot \left(\E\left[\left\|V^{1,N}_s\right\|_2^8\right] +\E\left[\left\|\overline{V}^{1,N}_s\right\|_2^8\right] \right)\\
&\leq C\cdot \left(\E \left[ \|V_0\|_2^8 \right]+k_8(T)\|v^*\|_2^8\right)\cdot e^{C\cdot T\cdot k_8(T)}.
\end{align*}
Thus combining the above inequality and \eqref{probability_bound},
\begin{gather}\label{omega}
\E\left[\left\|v_\alpha (\widehat{\rho}^N_s) - v_\alpha (\widehat{\overline{\rho}}^N_s)\right\|_2^2 \mathbbm{1}_{\Omega^N_s}\right]\leq C\cdot\left(\E \left[ \|V_0\|_2^8 \right]+k_8(T)\|v^*\|_2^8\right)^{3/4}\cdot e^{C\cdot T\cdot k_8(T)}\cdot N^{-1}.
\end{gather}
\paragraph{\textbf{Upper bound of $\E\left[\left\|v_\alpha (\widehat{\rho}^N_s) - v_\alpha (\widehat{\overline{\rho}}^N_s)\right\|_2^2 \mathbbm{1}_{(\Omega^N_s)^c}\right] $}} By the definition of the set $(\Omega^N_s)^c$, for all paths sampled from $(\Omega^N_s)^c$, we know $\widehat{\overline{\rho}}^N_s \in \CP_{2, \sqrt{K+1}}(\R^d)$. Also, from Proposition~\ref{prop:micro_moment_bound}, $\widehat{\rho}^N_s\in\CP_{2}(\R^d)$. Then by Proposition~\ref{prop:stab}, one has 
\begin{equation}\label{omega_c}
\begin{aligned}
&\E\left[\left\|v_\alpha (\widehat{\rho}^N_s) - v_\alpha (\widehat{\overline{\rho}}^N_s)\right\|_2^2 \mathbbm{1}_{(\Omega^N_s)^c}\right]\\
&\leq C \left(1 + e^{C(1+(2\sqrt{K+1})^{l} )}\left(1+(K+1)^{\tfrac{3}{2}}\right)\right)^2 \cdot  \E\left[ \CW_2(\widehat{\rho}^N_s,\widehat{\overline{\rho}}^N_s)\right ]^2\\
&\leq C \left(1 + e^{C(1+(2\sqrt{K+1})^{l} )}\left(1+(K+1)^{\tfrac{3}{2}}\right)\right)^2 \cdot \E\left[\left\|V^{1,N}_s - \overline{V}^{1,N}_s\right\|_2^2\right].
\end{aligned}
\end{equation}
Combining \eqref{omega split}, \eqref{omega} and \eqref{omega_c}, one can finish the proof.
\end{proof}
\begin{proof}[Proof of Theorem~\ref{thm:MSEbound}]
The proof largely follows \cite[Theorem 2.3]{agapiou2017importance}. By \cite[Lemma 6.4]{agapiou2017importance} (here we used a stronger version of it, namely, the last line in the proof of \cite[Lemma 2]{doukhan2009evaluation}), one has
\begin{gather*}
\E\left[\left\|\frac{\int \phi g \,d\widehat{\mu}^N}{\int g \,d\widehat{\mu}^N} - \frac{\int \phi g \,d\mu}{\int g \,d\mu}\right\|_2^2\right] \leq 3\sum_{k=1}^d (A_{1,k}+A_{2,k}+A_{3,k}),
\end{gather*}
where
\begin{align*}
A_{1,k} &= \frac{1}{(\int g\,d\mu)^2}\E\left[\left(\int \phi_k g \,d\widehat{\mu}^N - \int \phi_k g\,d\mu\right)^2\right],\\
A_{2,k} &=\frac{1}{(\int g \,d\mu)^4}\E\left[\left|\int \phi_k g\,d\widehat{\mu}^N\left(\int g\,d\widehat{\mu}^N - \int g\,d\mu\right)\right|^2\right],\\
A_{3,k} &=  \frac{1}{(\int g\,d\mu)^2(1+\theta)}\E\left[ \left|\frac{\sum_{i=1}^N |\phi_k(u^{i,N})| g(u^{i,N})}{\sum_{i=1}^N g(u^{i,N})}\right|^2 \cdot \left|\int g\,d\mu - \int g\,d\widehat{\mu}^N\right|^{2(1+\theta)}\right].
\end{align*}
In the above, $\theta\in (0,1)$ and its choice will be specified later.
\paragraph{\textbf{Upper bound of $\sum_{k=1}^d A_{1,k}$}} One has
\begin{equation}
\begin{aligned}\label{A1k}
\sum_{k=1}^d A_{1,k} &= \frac{1}{(\int g\,d\mu)^2}\sum_{k=1}^d \E\left[\left(\int \phi_k g \,d\widehat{\mu}^N - \int \phi_k g\,d\mu\right)^2\right]\\
&=\frac{1}{(\int g\,d\mu)^2}\sum_{k=1}^d\E\left[\left(\int \left(\phi_k g-\int \phi_k g\,d\mu\right) \,d\widehat{\mu}^N\right)^2\right]\\
&=\frac{1}{(\int g\,d\mu)^2}\sum_{k=1}^d\sum_{i=1}^N\E\left[\left( \phi_k(u^i) g(u^i)-\int \phi_k g\,d\mu \right)^2\right]\cdot N^{-2}\\
&\leq N^{-1}\cdot \frac{1}{(\int g\,d\mu)^2}\sum_{k=1}^d \CM_{2}(\phi_k g).\\
\end{aligned}
\end{equation}
\paragraph{\textbf{Upper bound of $\sum_{k=1}^d A_{2,k}$}} One has
\begin{align*}
\sum_{k=1}^d A_{2,k}&= \frac{1}{(\int g \,d\mu)^4}\E\left[\sum_{k=1}^d\left|\int \phi_k g\,d\widehat{\mu}^N\cdot \left(\int g\,d\widehat{\mu}^N - \int g\,d\mu\right)\right|^2\right]\\
&= \frac{1}{(\int g \,d\mu)^4} \E\left[\left(\int g\,d\widehat{\mu}^N - \int g\,d\mu\right)^2\cdot \left(\sum_{k=1}^d \left|\int \phi_k g\,d\widehat{\mu}^N\right|^2\right)\right]\\
&\overset{(a)}{\leq} \frac{1}{(\int g \,d\mu)^4} \E\left[\left(\int g\,d\widehat{\mu}^N - \int g\,d\mu\right)^2\cdot \left(\int \left(\sum_{k=1}^d g^2\left|\phi_k\right|^2\right)^{1/2}\,d\widehat{\mu}^N\right)^2\right]\\
&=\frac{1}{(\int g \,d\mu)^4} \E\left[\left(\int g\,d\widehat{\mu}^N - \int g\,d\mu\right)^2\cdot \left(\int  g \|\phi\|_2\,d\widehat{\mu}^N\right)^2\right]\\
&\overset{(b)}{\leq} \frac{1}{(\int g \,d\mu)^4} \E\left[\left(\int g\,d\widehat{\mu}^N - \int g\,d\mu\right)^{2m}\right]^{1/m} \cdot \E\left[\left(\int  g \|\phi\|_2\,d\widehat{\mu}^N\right)^{2l}\right]^{1/l},
\end{align*}
where step $(a)$ follows from Minkowski inequality applied to integral against $\widehat{\mu}^N$ and integral against the counting measure of $k$, and step $(b)$ follows from H\"older inequality. One further has
\begin{align*}
\E\left[\left(\int  g \|\phi\|_2\,d\widehat{\mu}^N\right)^{2l}\right]^{1/l} &= \frac{1}{N^2} \E\left[\left(\sum_{i=1}^N g(u^{i,N})\|\phi(u^{i,N})\|_2\right)^{2l}\right]^{1/l}\\
&\leq \frac{1}{N^2} \left(\sum_{i=1}^N \E\left[g(u^{i,N})^{2l}\|\phi(u^{i,N})\|_2^{2l}\right]^{1/2l}\right)^{2}\\
&=\left(\int \left(g\|\phi\|_2\right)^{2l}\,d\mu\right)^{1/l},
\end{align*}
where we used Minkowski inequality applied to integral against $\mu$ (denoted by $\E[\cdot]$) and integral against the counting measure of $i$. By \cite[Equation (6.2)]{agapiou2017importance}, one has
\begin{align*}
\E\left[\left(\int g\,d\widehat{\mu}^N - \int g\,d\mu\right)^{2m}\right]^{1/m} &\leq C_{2m}^{1/m} \left(\int \left(g - \int g\,d\mu\right)^{2m}\,d\mu\right)^{1/m} \cdot N^{-1}\\
&=C^{1/m}_{2m} \CM_{2m}^{1/m}(g) \cdot N^{-1} 
\end{align*}
Thus
\begin{gather}\label{A2K}
\sum_{k=1}^d A_{2,k}\leq \frac{1}{(\int g \,d\mu)^4}\left(\int \left(g\|\phi\|_2\right)^{2l}\,d\mu\right)^{1/l} C^{1/m}_{2m} \CM_{2m}^{1/m}(g) \cdot N^{-1} .
\end{gather}
\paragraph{\textbf{Upper bound of $\sum_{k=1}^d A_{3,k}$}} One has
\begin{align*}
\sum_{k=1}^d A_{3,k}&=\sum_{k=1}^d \frac{1}{(\int g\,d\mu)^2(1+\theta)}\E\left[\left|\frac{\sum_{i=1}^N |\phi_k(u^{i,N})| g(u^{i,N})}{\sum_{i=1}^N g(u^{i,N})}\right|^2 \cdot \left|\int g\,d\mu - \int g\,d\widehat{\mu}^N\right|^{2(1+\theta)}\right]\\
&= \frac{1}{(\int g\,d\mu)^2(1+\theta)}\E\left[\left(\sum_{k=1}^d\left|\frac{\sum_{i=1}^N |\phi_k(u^{i,N})| g(u^{i,N})}{\sum_{i=1}^N g(u^{i,N})}\right|^2\right) \cdot \left|\int g\,d\mu - \int g\,d\widehat{\mu}^N\right|^{2(1+\theta)}\right].
\end{align*}
Use $w^{i,N}$ to denote $\tfrac{g(u^{i,N})}{\sum_{i=1}^N g(u^{i,N})}$, and $w^{N}$ to denote the vector $(w^{1,N},\dots,w^{N,N})^\top \in \R^{N}$. One knows $0 < w^{i,N} <1$ and $\sum_{i=1}^N w^{i,N} = 1$. Further, we use $\Phi^N$ to denote the matrix $(|\phi(u^{1,N})|,\dots,|\phi(u^{N,N})|)\in \R^{d\times N}$. Here when the absolute value symbol $|\cdot|$ is applied to a vector, it means entry-wise application. We have
\begin{align*}
\sum_{k=1}^d\left|\frac{\sum_{i=1}^N |\phi_k(u^{i,N})| g(u^{i,N})}{\sum_{i=1}^N g(u^{i,N})}\right|^2&=\sum_{k=1}^d\sum_{r,s=1}^{N} |\phi_{k}(u^{r,N})|\cdot w^{r,N} \cdot |\phi_{k}(u^{s,N})|\cdot w^{s,N}\\
&=\sum_{r,s=1}^N w^{r,N}\cdot w^{s,N} \cdot \left( |\phi(u^{r,N})|^\top \cdot |\phi(u^{s,N})|\right)\\
&=\sum_{r,s=1}^N w^{r,N}\cdot w^{s,N} \cdot \left((\Phi^N)^T \Phi^N\right)_{r,s}\\
&= \left\|\Phi^N w^N\right\|_2^2.
\end{align*}
Also, we know
\begin{gather*}
\|\Phi^N w^N\|_2\leq w^{1,N}\|\phi(u^{1,N})\|_2+\dots+w^{N,N}\|\phi(u^{i,N})\|_2\leq \max_{1\leq i\leq N} \|\phi(u^{i,N})\|_2.
\end{gather*}
Thus
\begin{align*}
\sum_{k=1}^d A_{3,k}&\leq \frac{1}{(\int g\,d\mu)^2(1+\theta)}\E\left[\max_{1\leq i\leq N} \|\phi(u^{i,N})\|_2^2 \cdot \left|\int g\,d\mu - \int g\,d\widehat{\mu}^N\right|^{2(1+\theta)}\right]\\
&\leq  \frac{1}{(\int g\,d\mu)^2(1+\theta)} \E\left[\max_{1\leq i\leq N} \|\phi(u^{i,N})\|_2^{2p}\right]^{1/p} \cdot \E\left[\left|\int g\,d\mu - \int g\,d\widehat{\mu}^N\right|^{2q(1+\theta)}\right]^{1/q},
\end{align*}
where we used H\"older inequality in the second inequality. Moreover, we have
\begin{gather*}
\E\left[\max_{1\leq i\leq N} \|\phi(u^{i,N})\|_2^{2p}\right]^{1/p}\leq \E\left[\sum_{i=1}^N \|\phi(u^{i,N})\|_2^{2p}\right]^{1/p}=N^{1/p} \left(\int \|\phi\|_2^{2p}\,d\mu\right)^{1/p}.
\end{gather*}
Further from \cite[Equation (6.2)]{agapiou2017importance}, we have
\begin{gather*}
\E\left[\left|\int g\,d\mu - \int g\,d\widehat{\mu}^N\right|^{2q(1+\theta)}\right]^{1/q} \leq C_{2q(1+\theta)}^{1/q} \CM_{2q(1+\theta)}^{1/q} N^{-1-\theta}.
\end{gather*}
Picking $\theta=1/p\in(0,1)$, one has
\begin{gather}\label{A3K}
\sum_{k=1}^d A_{3,k}\leq \frac{1}{(\int g\,d\mu)^2(1+\tfrac{1}{p})}C_{2q(1+\tfrac{1}{p})}^{1/q} \CM_{2q(1+\tfrac{1}{p})}^{1/q}\cdot \left(\int \|\phi\|_2^{2p}\,d\mu\right)^{1/p}\cdot N^{-1}.
\end{gather}
Combining \eqref{A1k}, \eqref{A2K} and \eqref{A3K} to finish the proof.
\end{proof}
\begin{proof}[Proof of Lemma~\ref{thm:importance sampling}]
From Theorem~\ref{thm:MSEbound} with $l=m=p=q=2$, $\mu = \rho_s$, $\phi(v) = v$ and $g(v) = e^{-\alpha \CE(v)}$, one has
\begin{gather}
\E\left[\left\|v_\alpha (\widehat{\overline{\rho}}^N_s) - v_\alpha (\rho_s)\right\|_2^2\right] \leq C_{\text{MSE}} N^{-1},
\end{gather}
where
\begin{align*}
C_{\text{MSE}} =& \frac{3}{(\int e^{-\alpha \CE(v)}\,d\rho_s)^2}\cdot\sum_{k=1}^{d} \left( \CM_{2}\left(v_k e^{-\alpha \CE(v)}\right)\right)\\
&+ \frac{27}{(\int e^{-\alpha \CE(v)}\,d\rho_s)^4} \cdot \left(\left(\int \|\phi\|_2^4 e^{-4\alpha \CE (v)}\,d\rho_s\right)^{1/2} \cdot \CM_4\left(e^{-\alpha \CE(v)}\right)^{1/2}\right)\\
&+\frac{375}{(\int e^{-\alpha \CE(v)}\,d\rho_s)^{3}}\cdot \left(\left(\int \|v\|_2^{4}\,d\rho_s\right)^{1/2}\cdot \CM_{6}\left(e^{-\alpha \CE(v)}\right)^{1/2}\right).
\end{align*}

\paragraph{\textbf{Lower bound of $\int e^{-\alpha \CE(v)}\,d\rho_s$}} Since $p=2\geq \max\{2,p_\CM(s,l)\}$ in our case, from Theorem~\ref{thm:mean-fild existence theorem}, we know $\E \left[\|\overline{V}_s\|_2^2\right] \leq C\cdot \left(\E \left[ \|\overline{V}_0\|_2^2 \right]+k_2(T)\|v^*\|_2^2\right)\cdot e^{C\cdot T\cdot k_2(T)} $. By Markov inequality, with
\begin{gather}\label{def of R}
R=\left(2C\cdot \left(\E \left[ \|\overline{V}_0\|_2^2 \right]+k_2(T)\|v^*\|_2^2\right)\cdot e^{C\cdot T\cdot k_2(T)}\right)^{1/2},
\end{gather} one has $\rho_s (B_R^c(0)) \leq \int \|v\|_2^2\,d\rho_s / R^2 \leq 1/2 $. Thus 
\begin{gather*}
\int e^{-\alpha \CE(v)}\,d\rho_s\geq \int_{B_R (0)} e^{-\alpha \CE(v)}\,d\rho_s \geq \int_{B_R (0)} e^{-\alpha(c_u R^l + C_u+\underline{\CE})}\,d\rho_s \geq \frac{1}{2}e^{-\alpha(c_u R^l + C_u + \underline{\CE})}.
\end{gather*}
Then
\begin{gather}\label{weight_int_upper_bound}
\frac{1}{(\int e^{-\alpha \CE(v)}\,d\rho_s)^2} \vee \frac{1}{(\int e^{-\alpha \CE(v)}\,d\rho_s)^3} \vee \frac{1}{(\int e^{-\alpha \CE(v)}\,d\rho_s)^4} \leq 16 e^{4\alpha (c_u R^l + C_u + |\underline{\CE}|)},
\end{gather}
where $R$ is defined in~\eqref{def of R}.
\paragraph{\textbf{Upper bound of $\CM_{4}\left(e^{-\alpha \CE(v)}\right)^{1/2}$ and $\CM_{6}\left(e^{-\alpha \CE(v)}\right)^{1/2}$}} Since $\CE$ is lower bounded, we have
\begin{gather}\label{weight_moment_bound}
\CM_{4} (e^{-\alpha \CE(v)}) \vee \CM_{6} (e^{-\alpha \CE(v)}) \leq C.
\end{gather}
Putting \eqref{weight_int_upper_bound} and \eqref{weight_moment_bound} together, one obtains
\begin{gather}\label{whole_pre}
C_{\text{MSE}} \leq C e^{C(R^l + 1)}\left(\left(\sum_{k=1}^{d}  \CM_{2}\left(v_k e^{-\alpha \CE(v)}\right)\right) + \left(\int \|v\|_2^4 e^{-4\alpha \CE (v)}\,d\rho_s\right)^{1/2} + \left(\int \|v\|_2^{4}\,d\rho_s\right)^{1/2}\right),
\end{gather}
where $R$ is defined in~\eqref{def of R}.
\paragraph{\textbf{Upper bound of $\sum_{k=1}^{d}  \CM_{2}\left(v_k e^{-\alpha \CE(v)}\right)$}} Since $\CE$ is lower bounded, using Theorem~\ref{thm:mean-fild existence theorem}, one has
\begin{equation}
\begin{aligned}\label{bound_1}
&\sum_{k=1}^{d}  \CM_{2}\left(v_k e^{-\alpha \CE(v)}\right)\\
&\leq \sum_{k=1}^d \int v_k^2 e^{-2\alpha \CE(v)}\,d\rho_s \leq e^{-2\alpha \underline{\CE}} \int \|v\|_2^2 \,d\rho_s \leq e^{-2\alpha \underline{\CE}}\cdot C\cdot \left(\E \left[ \|\overline{V}_0\|_2^2 \right]+k_2(T)\|v^*\|_2^2\right)\cdot e^{C\cdot T\cdot k_2(T)}.
\end{aligned}
\end{equation}
\paragraph{\textbf{Upper bounds of $ \left(\int \|v\|_2^4 e^{-4\alpha \CE (v)}\,d\rho_s\right)^{1/2}$ and $\left(\int \|v\|_2^{4}\,d\rho_s\right)^{1/2}$}} Since $\CE$ is lower bounded, it suffices to bound $ \left(\int \|v\|_2^{4}\,d\rho_s\right)^{1/2}$. By Theorem~\ref{thm:mean-fild existence theorem}, one has
\begin{gather*}
\left(\int \|v\|_2^{4}\,d\rho_s\right)^{1/2}\leq C\cdot \left(\E \left[ \|\overline{V}_0\|_2^4 \right]+k_4(T)\|v^*\|_2^4\right)^{1/2}\cdot e^{C\cdot T\cdot k_4(T)}.
\end{gather*}
Thus, one has
\begin{align*}
&C_{\text{MSE}}\leq C \cdot e^{\Phi_{\rho_0, T,v^*}}\left(\max\{k_4(T),k_2^2(T)\}\|v^*\|_2^4 + \E[\|\overline{V}_0\|_2^4]\right)^{1/2},
\end{align*}
where $\Phi_{\rho_0,T,v^*} = C\left(1+T\cdot \max\{k_2(T),k_4(T)\} + \left(\E \left[ \|\overline{V}_0\|_2^2 \right]+k_2(T)\|v^*\|_2^2\right)^{l/2}\cdot e^{C\cdot T\cdot k_2(T)}\right)$.
This completes the proof.
\end{proof}
\newpage
\bibliographystyle{abbrv}     
\bibliography{citations}   

\end{document}